\numberwithin{equation}{section}
\theoremstyle{plain}
\newtheorem{thm}{Theorem}[section] 
\newtheorem{prop}[thm]{Proposition}
\newtheorem{cor}[thm]{Corollary}
\newtheorem{lem}[thm]{Lemma}
\newtheorem{theorem*}{Theorem}[]
\theoremstyle{definition}
\newtheorem{defn}[thm]{Definition}
\newtheorem{example}[thm]{Example}
\theoremstyle{remark}
\newtheorem{rem}[thm]{Remark}
\newcommand{\N}{\mathbb{N}}
\newcommand{\R}{\mathbb{R}}
\newcommand{\Z}{\mathbb{Z}}
\newcommand{\stsim}{ \sim\raisebox{-1.ex}{\makebox[-13.pt]{\scriptsize{{st} }}} \ \ \  }
\title[Directional properties in o-minimal structures]
{Directional properties of sets definable\\
in o-minimal structures}
\author{Satoshi Koike, Ta L\^e Loi, Laurentiu Paunescu and Masahiro Shiota}
\address{Department of Mathematics, Hyogo University of Teacher Education,
Kato, Hyogo 673-1494, Japan}
\email{koike@hyogo-u.ac.jp} 
\address{Department of Mathematics, University of Dalat,
Dalat, Vietnam}
\email{taleloi@hotmail.com} 
\address{School of Mathematics, University of Sydney, Sydney, NSW, 2006,
Australia}
\email{laurentiu.paunescu@sydney.edu.au}
\address{Graduate School of Mathematics, Nagoya University,
Furo-cho, Chigusa-ku, Nagoya 464-8602, Japan}
\email{shiota@math.nagoya-u.ac.jp}
\subjclass{Primary 14P15, 32B20 Secondary 14P10, 57R45}
\keywords{direction set, o-minimal structure, bi-Lipschitz homeomorphism.}
\date{9-2-2010}
\begin{document}

\thanks{This research is partially supported by the Grant-in-Aid 
for Scientific Research (No. 20540075) of Ministry of Education, 
Science and Culture of Japan, and HEM 21 Invitation Fellowship
Programs for Research in Hyogo.}

\maketitle

\begin{abstract}
In \cite{koikepaunescu} it was introduced the notion of  direction set
for a subset of $\R^n$,
and it was shown that the dimension of the common direction set of two
subanalytic subsets, called {\em directional dimension}, is preserved
by a bi-Lipschitz homeomorphism, provided that their images are 
also subanalytic.
In this paper we give a generalisation of the above result to sets 
definable in an o-minimal structure on an arbitrary real closed field. 
More precisely, we first prove our main theorem and discuss in detail
directional properties in the case of an Archimedean real closed field, 
and in \S 7 we give a proof 
in the  case of a general real closed field.
In addition, related to our main result, we show the existence
of special polyhedra in some Euclidean space, illustrating that the
bi-Lipschitz equivalence does not always imply the existence of 
a definable one.
\end{abstract}


\bigskip
\section{Introduction}\label{introduction}
\medskip

We first recall the notions of direction set and real tangent cone in $\R^n$.

\begin{defn}\label{directionset}
Let $A$ be a set-germ at $0 \in \R^n$ such that
$0 \in \overline{A}$.
We define the {\em direction set} $D(A)$ of $A$ at $0 \in \R^n$ by
$$
D(A) := \{a \in S^{n-1} \ | \
\exists  \{ x_i \} \subset A \setminus \{ 0 \} ,
\ x_i \to 0 \in \R^n  \ \text{s.t.} \
{x_i \over \| x_i \| } \to a, \ i \to \infty \}.
$$
Here $S^{n-1}$ denotes the unit sphere 
centred at $0 \in \R^n$.

We denote by $LD(A)$ a half-cone of $D(A)$ with the origin 
$0 \in \R^n$ as the vertex:
$$
LD(A) := \{ t a \in \R^n\ | \ a \in D(A), \ t \ge 0 \},
$$
and call it the {\em real tangent cone} of $A$ at $0 \in \R^n$. 
\end{defn}

Let us examine an example.

\begin{example}\label{example11}
Let $h : \R^3 \to \R^3$ be a semialgebraic homeomorphism defined by 
$h(x,y,z) = (x,y,z^3)$, and let
$V = \{ (x,y,z) \in \R^3 : x^2 + y^2 - z^6 = 0 \}$. 
Then $V$ and $h(V) = \{ (x,y,z) \in \R^3 : x^2 + y^2 - z^2 = 0 \}$ 
are algebraic sets. 
It is easy to see that $\dim D(A) = 0$ and $\dim D(h(A)) = 1$. 
Therefore the dimension of direction sets is 
not a homeomorphic invariant.
\end{example}

We next investigate whether the dimension of direction sets 
is a Lipschitz invariant.
There are many singular examples of bi-Lipschitz homeomorphisms.
In \cite{koikepaunescu} it was given an example of a 
``quick spiral bi-Lipschitz
homeomorphism" and of a ``zigzag bi-Lipschitz homeomorphism".
Here we give a different one.

\begin{example}\label{example12}(Oscillation). 
Let $h : (\R^2,0) \to (\R^2,0)$ be a mapping defined by
$h(x,y)=(x, y + f(x))$, where $f(x) = x \sin(\ln |x|)$, 
and let $A = \R \times 0$. 
Then we can see that $h$ is a bi-Lipschitz homeomorphism 
and $h(A)$ is the graph of $f$. 
In addition, we have $\dim D(A) = 0$ and $ \ \dim(D(h(A))) = 1$.
Consequently the dimension of direction sets is not 
a bi-Lipschitz invariant either.
\end{example}

Note that in the above example $h(A)$ is not a subanalytic set.
Therefore we may ask whether the dimension of direction
sets is a bi-Lipschitz invariant in the case when the image $h(A)$ 
is also subanalytic.
In fact, the following result is shown in \cite{koikepaunescu}.

\begin{thm}\label{maintheorem}
(Main Theorem in \cite{koikepaunescu})
Let $A$, $B \subset \R^n$ be subanalytic set-germs at $0 \in \R^n$ 
such that $0 \in \overline{A} \cap \overline{B}$, and let 
$h : (\R^n,0) \to (\R^n,0)$ be a bi-Lipschitz homeomorphism.
Suppose that $h(A), \ h(B)$ are also subanalytic.
Then we have
$$
\dim (D(h(A)) \cap D(h(B))) = \dim (D(A) \cap D(B)).
$$
\end{thm}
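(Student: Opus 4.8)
The plan is to reduce everything to the single inequality
\begin{equation*}
\dim\bigl(D(P)\cap D(Q)\bigr)\ \ge\ \dim\bigl(D(g(P))\cap D(g(Q))\bigr),
\end{equation*}
asserted for an arbitrary bi-Lipschitz homeomorphism-germ $g\colon(\R^n,0)\to(\R^n,0)$ and arbitrary subanalytic set-germs $P,Q$ at $0$ (with $0\in\overline P\cap\overline Q$) whose images $g(P),g(Q)$ are subanalytic; applying it to $(h,A,B)$ and again to $(h\inv,h(A),h(B))$ then yields the stated equality. Fix $0<c\le C$ with $c\|u-v\|\le\|g(u)-g(v)\|\le C\|u-v\|$ near $0$, so that also $c\|u\|\le\|g(u)\|\le C\|u\|$. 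I will use the standard facts that the direction set of a subanalytic germ is subanalytic (so all dimensions below are well defined), that $LD(Y)$ is the tangent cone of $Y$ at $0$, that $LD(Y_1)\cap LD(Y_2)=LD\bigl(D(Y_1)\cap D(Y_2)\bigr)$, and that $\dim LD(Y)=\dim D(Y)+1$ when $D(Y)\neq\emptyset$.

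Put $k:=\dim\bigl(D(g(P))\cap D(g(Q))\bigr)$ and pick a compact $k$-dimensional subanalytic $C^1$-cell $K\subset D(g(P))\cap D(g(Q))\subset S^{n-1}$. The first and, I expect, hardest step is a \emph{wedge construction}. Using the blowing-up $\beta\colon S^{n-1}\times[0,\infty)\to\R^n$, $(u,r)\mapsto ru$, whose strict transform $\widetilde{g(P)}:=\overline{\beta\inv(g(P)\setminus\{0\})}$ is subanalytic and meets the exceptional divisor $S^{n-1}\times\{0\}$ exactly in $D(g(P))\times\{0\}$, I would (after shrinking $K$ around a generic point) extract inside $\widetilde{g(P)}$ a $(k{+}1)$-dimensional subanalytic $C^1$ submanifold-with-boundary whose boundary contains $K\times\{0\}$ and on which $r$ is a regular coordinate. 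Blowing down, this gives a subanalytic germ $\hat A\subset g(P)$ of the form $\hat A=\{\phi_A(v,r):v\in K,\ 0\le r<\epsilon\}$ with $\|\phi_A(v,r)\|=r$ and $\phi_A(v,r)/r\to v$ in $C^1$ as $r\to0$, uniformly in $v\in K$. The same construction inside $g(Q)$ (here $K\subset D(g(Q))$ is used) gives $\hat B=\{\phi_B(v,r)\}\subset g(Q)$ tangent to the \emph{same} cone $LD(K)$, i.e.\ $\phi_B(v,r)/r\to v$ in $C^1$ as well. Two consequences of this common $C^1$-tangency are: (i) $\|\phi_A(v,r)-\phi_B(v,r)\|=r\cdot o(1)$ as $r\to0$, uniformly in $v\in K$; and (ii) for every small $t>0$ the map $(v,s)\mapsto\phi_A(v,st)$ (and likewise $\phi_B$) is a bi-Lipschitz embedding of $K\times[1,2]$ into $\R^n$ with bi-Lipschitz constants comparable to $t$, uniformly in $t$, since it is a $C^1$-small perturbation of the embedding $(v,s)\mapsto s\,v$ rescaled by $t$.

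Now transport by $g\inv$, which is bi-Lipschitz with constants between $1/C$ and $1/c$. For a sequence $t_i\downarrow0$ set, on the fixed compact set $K\times[1,2]$,
\begin{equation*}
\Psi_i^A(v,s):=\frac1{t_i}\,g\inv\bigl(\phi_A(v,s\,t_i)\bigr),\qquad \Psi_i^B(v,s):=\frac1{t_i}\,g\inv\bigl(\phi_B(v,s\,t_i)\bigr).
\end{equation*}
By (ii) and the bi-Lipschitz bounds on $g\inv$, all the $\Psi_i^A$ and $\Psi_i^B$ are bi-Lipschitz embeddings with constants in one fixed compact subinterval of $(0,\infty)$, independent of $i$, and they are uniformly bounded; so by the Arzel\`a--Ascoli theorem a subsequence gives $\Psi_i^A\to\Psi^A$ and $\Psi_i^B\to\Psi^B$ uniformly, with $\Psi^A,\Psi^B\colon K\times[1,2]\to\R^n$ injective and bi-Lipschitz onto their images (the lower Lipschitz bound is preserved in the limit). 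By (i), $\|\Psi_i^A-\Psi_i^B\|_\infty\le\frac1{c\,t_i}\sup_{v,s}\|\phi_A(v,st_i)-\phi_B(v,st_i)\|\to0$, whence $\Psi^A=\Psi^B=:\Psi$. Finally, for each $(v,s)$ we have $\Psi(v,s)=\lim_i\frac1{t_i}\,y_i$ with $y_i:=g\inv(\phi_A(v,st_i))\in g\inv(\hat A)\subset P$ and $y_i\to0$; since $y_i/\|y_i\|\to\Psi(v,s)/\|\Psi(v,s)\|$ and $\|\Psi(v,s)\|\ge s/C>0$, the direction $\Psi(v,s)/\|\Psi(v,s)\|$ lies in $D(g\inv(\hat A))$, hence $\Psi(v,s)\in LD(g\inv(\hat A))$, and symmetrically $\Psi(v,s)\in LD(g\inv(\hat B))$. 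Thus the bi-Lipschitz image $\Psi(K\times[1,2])$, of dimension $k{+}1$, is contained in $LD(g\inv(\hat A))\cap LD(g\inv(\hat B))=LD\bigl(D(g\inv(\hat A))\cap D(g\inv(\hat B))\bigr)$; as $g\inv(\hat A)\subset P$ and $g\inv(\hat B)\subset Q$ this gives
\begin{align*}
\dim\bigl(D(P)\cap D(Q)\bigr)&\ \ge\ \dim\bigl(D(g\inv(\hat A))\cap D(g\inv(\hat B))\bigr)\\
&=\ \dim\bigl(LD(g\inv(\hat A))\cap LD(g\inv(\hat B))\bigr)-1\ \ge\ (k{+}1)-1=k,
\end{align*}
the required inequality. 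The only genuinely delicate point is the wedge construction of the second paragraph — producing inside the strict transform a $C^1$ submanifold-with-boundary with prescribed boundary on the exceptional divisor and with geometry controlled uniformly across scales; granted that, the remainder is soft compactness together with the Lipschitz estimates for $g\inv$.
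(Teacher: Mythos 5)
Your reduction to a single inequality, and everything from the rescaling step onward (uniform bi-Lipschitz bounds for $\Psi_i^{A},\Psi_i^{B}$, Arzel\`a--Ascoli, identification $\Psi^A=\Psi^B$, membership of $\Psi(v,s)$ in $LD(D(P)\cap D(Q))$, and the dimension count) is sound. But the wedge construction in your second paragraph is a genuine gap, and it is exactly where the whole difficulty of the theorem sits. You need, over a \emph{common} $k$-cell $K\subset D(g(P))\cap D(g(Q))$, parametrisations $\phi_A(v,r)\in g(P)$, $\phi_B(v,r)\in g(Q)$ with $\|\phi(v,r)\|=r$ and $\phi(v,r)/r\to v$ \emph{uniformly in the $C^1$ sense}; and the $C^1$ strength is not cosmetic: with mere $C^0$ tangency the error in your estimate (ii) is additive of size $r\cdot o(1)$, which destroys the lower Lipschitz bound precisely when $\|v-v'\|+|s-s'|$ is itself $o(1)$, and without the uniform lower bound injectivity of the limit $\Psi$ is lost. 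What you are asserting is in effect a strong subanalytic wing lemma with $C^1$ control up to the exceptional divisor (a normal pseudo-flatness statement at generic points of $D(g(P))$, in the spirit of Paw\l ucki's wing lemma), together with several reductions you do not address: the stratum of the strict transform accumulating on $K\times\{0\}$ may have dimension larger than $k+1$ and must first be cut down; a wing may be tangent to the exceptional divisor, so $r$ need not be a regular coordinate on it and the reparametrisation by $r$ has to be controlled near the boundary; and the generic shrinking of $K$ must be made simultaneously for $g(P)$ and $g(Q)$. None of this is routine, and ``I would extract'' is not a proof of it.

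For comparison, the paper (following \cite{koikepaunescu}) is organised precisely so as to avoid any wing/wedge construction: it establishes that a definable (subanalytic) germ is $ST$-equivalent to its tangent cone $LD(A)$ via \L ojasiewicz-type estimates, shows $ST$-equivalence is a bi-Lipschitz invariant (Sandwich Lemma), uses the sequence selection property for $h(LD(A))$, and then compares volumes of sea-tangle neighbourhoods of cones of different dimensions to obtain the key inequality $\dim h(LD(A))\ge\dim LD(h(LD(A)))$. Your route, if the uniform $C^1$ wedge statement were proved or precisely cited, would be a genuinely different and rather more geometric argument; as written, however, it replaces the known machinery by an unproved statement of at least comparable depth.
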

See H. Hironaka \cite{hironaka} for subanalyticity.

A subfield of $\R$ is called {\it Archimedean}.
This terminology comes from the fact that an ordered field $R$ 
is isomorphic to a subfield of $\R$ if and only if for any positive 
elements $a$ and $b$ of $R$ there exists a natural number $n$ 
such that $a n > b$. 
An ordered field $R$ is called {\it real closed} if its 
complexification $R[t]/(1+t^2)R[t]$ is algebraically closed. 
The field of real numbers $\R$ is an Archimedean real closed field.
Another well-known example of Archimedean real closed field
is the field of real algebraic numbers. 
In this paper we give a generalization of Theorem \ref{maintheorem} 
to the case of sets definable in an arbitrary o-minimal structure 
on an arbitrary real closed field (Theorem \ref{non-Archimedean}).
We first show the main theorem for the case of an Archimedean
real closed field.
Namely, we show the following:

\begin{thm}\label{omaintheorem}
Let $R$ be an Archimedean real closed field, and
let $A, \ B$ be definable set-germs at $0$ in $R^n$ 
in an o-minimal structure on $R$
such that $0 \in \overline{A} \cap \overline{B}$. 
Let $h : (R^n,0) \to (R^n,0)$ be a bi-Lipschitz homeomorphism. 
Suppose that $h(A), \ h(B)$ are also definable. 
Then we have
$$
\dim (D(h(A)) \cap D(h(B))) = \dim (D(A)\cap D(B)).
$$
\end{thm}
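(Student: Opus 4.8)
The plan is to carry out the argument of the proof of Theorem~\ref{maintheorem} in the o-minimal language over $R$, using as basic toolkit the o-minimal analogues of the facts used in the subanalytic case --- curve selection, the monotonicity theorem, the \L{}ojasiewicz inequalities, definable triangulation, dimension theory and definable choice --- all of which hold for definable sets in an o-minimal structure on an arbitrary real closed field. The Archimedean hypothesis is what keeps the naive meaning of ``$x_i\to 0$'' and of ``$o(\|x\|)$'', and, more importantly, what lets us handle the bi-Lipschitz map $h$ itself, which is \emph{not} assumed definable: only its images $h(A)$, $h(B)$ are. By symmetry --- replacing $(A,B,h)$ by $(h(A),h(B),h^{-1})$ --- it is enough to prove $\dim(D(h(A))\cap D(h(B)))\le\dim(D(A)\cap D(B))$.

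The first step is to trade the direction sets for definable families. Write $\nu(x)=x/\|x\|$ and, for $\varepsilon,\delta>0$, put $A_{\varepsilon,\delta}=\{x\in A:0<\|x\|<\delta,\ \dist(x,B)<\varepsilon\|x\|\}$; curve selection and reparametrisation of arcs by the radius yield $D(A)\cap D(B)=\bigcap_{\varepsilon,\delta>0}\overline{\nu(A_{\varepsilon,\delta})}$, and since this is a decreasing definable family, o-minimality gives $\dim(D(A)\cap D(B))=\dim\overline{\nu(A_{\varepsilon,\delta})}$ for all small $\varepsilon,\delta$; likewise for $(h(A),h(B))$. The bi-Lipschitz inequalities $\|h(x)-h(y)\|\le L\|x-y\|$ and $\|x\|/L\le\|h(x)\|$ then give the interleaving $h\bigl(A_{\varepsilon/L^{2},\,\delta/L}\bigr)\subseteq\bigl(h(A)\bigr)_{\varepsilon,\delta}\subseteq h\bigl(A_{L^{2}\varepsilon,\,L\delta}\bigr)$, where $(h(A))_{\varepsilon,\delta}$ denotes the same construction for the pair $(h(A),h(B))$. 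Squeezing, the limits of $\dim\overline{\nu((h(A))_{\varepsilon,\delta})}$ and of $\dim\overline{\nu(h(A_{\varepsilon,\delta}))}$ coincide, so the theorem is reduced to the assertion that, in the limit $\varepsilon,\delta\to0$, $\dim\overline{\nu(h(A_{\varepsilon,\delta}))}$ does not exceed $\dim\overline{\nu(A_{\varepsilon,\delta})}$ --- that is, to the single-germ statement that a bi-Lipschitz homeomorphism cannot raise the dimension of the direction set of a definable germ whose image is here sandwiched between definable sets.

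This last point is the main obstacle; it is the place where tameness of both the germ and its image is indispensable, since a bi-Lipschitz homeomorphism can a priori collapse a positive-dimensional set of limiting directions, as Example~\ref{example12} shows when the image is not definable. I would establish it as in Theorem~\ref{maintheorem}: triangulate $(R^{n},A,B)$ definably near $0$ so that the relevant radial directions vary definably and finitely along strata; produce, by cell decomposition and definable choice, a definable $d$-parameter family of arcs in $A$ (with $d=\dim\overline{\nu(A_{\varepsilon,\delta})}$) having pairwise distinct limiting directions; and apply a definable \L{}ojasiewicz inequality to compare $\|\gamma(t)-\gamma'(t)\|$ with $\|h(\gamma(t))-h(\gamma'(t))\|$ along this family, concluding that the $h$-images still realise a $d$-dimensional set of limiting directions in $D(h(A))\cap D(h(B))$. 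The o-minimal translation of the soft machinery is routine; reproving this non-collapsing estimate is where the work lies, and it is also where the argument becomes sensitive to the arithmetic of $R$, which is why the Archimedean case is handled first and the general real closed field --- where $h$ behaves badly with infinitesimals and sequential limits lose their force --- is deferred to the transfer arguments of \S7.
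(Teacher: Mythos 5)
Your reduction step is reasonable (and the interleaving $h(A_{\varepsilon/L^{2},\delta/L})\subseteq (h(A))_{\varepsilon,\delta}\subseteq h(A_{L^{2}\varepsilon,L\delta})$ is correct), but the proposal stops exactly where the theorem actually lives, and the sketch you give for that core step does not work. You propose to take a definable $d$-parameter family of arcs in $A$ with pairwise distinct limiting directions and to conclude, from the bi-Lipschitz comparison of $\|\gamma(t)-\gamma'(t)\|$ with $\|h(\gamma(t))-h(\gamma'(t))\|$, that the images realise a $d$-dimensional set of limiting directions. Notice that this argument never uses the definability of $h(A)$ and $h(B)$: it would apply verbatim to Example \ref{example12} (and to its inverse), where the conclusion is false. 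Pairwise separation of the image arcs at linear scale does not force their sets of accumulation directions to be $d$-dimensional: the image arcs are not definable, each can oscillate, their direction sets are attained along different sequences of radii and can all collapse onto a small set. Turning ``separated families of arcs'' into a dimension bound is precisely the hard point, and the known way to do it is a volume (or measure) argument, which you explicitly leave out. Moreover, a ``definable \L{}ojasiewicz inequality'' comparing $\|\gamma-\gamma'\|$ with $\|h(\gamma)-h(\gamma')\|$ is both unnecessary (bi-Lipschitzness already gives two-sided comparison) and not where \L{}ojasiewicz enters; also note that the power-type \L{}ojasiewicz inequality you invoke as part of the ``toolkit'' fails in general o-minimal structures (Example \ref{curve} in the paper, over Wilkie's $\R_{\exp}$), which is why the paper must replace the neighbourhoods $ST_d(A;C)$ of \cite{koikepaunescu} by gauges $\theta\in\Phi$.

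For comparison, the paper's mechanism is: redefine sea-tangle neighbourhoods $ST_{\theta}(A)$ with definable gauges (Proposition \ref{okeyproperty}, Proposition \ref{oppositekey}), prove that a definable germ is $ST$-equivalent to its tangent cone $LD(A)$ (Theorem \ref{eqtheorem}), show $ST$-equivalence is preserved by bi-Lipschitz maps via the Sandwich Lemma \ref{osandwichlemma}, deduce that $LD(h(A))=LD(h(LD(A)))$ is definable, and then prove the key inequality $\dim h(LD(A))\ge \dim LD(h(LD(A)))$ (Proposition \ref{reductions}) by comparing volumes of closures in $\R^{n}$ of sea-tangle neighbourhoods of cones of different dimensions (Lemma \ref{maintool}, Corollary \ref{volumeratio}); the definability of the image enters exactly there, through the $ST$-equivalence of $h(A)$ with the definable cone $LD(h(A))$. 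This also resolves a second difficulty your sketch ignores: over an Archimedean real closed field $R\subsetneq\R$, limiting directions of the (non-definable) image arcs need not exist in $R^{n}$ at all (Example \ref{example21}), so an argument phrased in terms of limits of $h(\gamma_{s}(t))/\|h(\gamma_{s}(t))\|$ cannot be run directly; the paper avoids this by working with definable cone and sea-tangle statements and by taking closures in $\R^{n}$ only for the volume estimates. As it stands, your proposal is a plausible reduction plus an appeal to ``as in Theorem \ref{maintheorem}'' at the one step where the subanalytic proof does not transfer without the new $\Phi$-gauge and volume machinery, so the essential content is missing.
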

See the next section for the definition of a definable set
and of the direction set $D(A)$ of a set $A$ in $R^n$.

Theorem \ref{maintheorem} was shown using essentially the following 
ingredients:

\vspace{3mm}

\qquad (1) Sea-tangle properties;

\qquad (2) Sequence selection properties;

\qquad (3) Volume arguments.

\vspace{3mm}

In \S 2 we describe the notion of o-minimal structure
and point out some of its properties.
We give in \S 3 an important example concerning the relationship
between  bi-Lipschitz equivalence and  definable equivalence.
After this we introduce an adapted notion of sea-tangle neighbourhood,
using ordered definable functions and describe several
of its properties
 in \S 4.
In \S 5 we discuss sequence selection properties,
and we give the proof of our main theorem
(Theorem \ref{omaintheorem}) using volume arguments in \S 6.
In \S \S 4 - 6 we develop the arguments in any o-minimal
structure on any Archimedean real closed field.
In \S 7 we generalise the main theorem to any real closed field
and give a proof. 
This proof is rather resembling  proofs in  Logic Theory. 
Essentially,  this shows that in our proof 
of the main theorem we do not use all special properties 
of the real number field (e.g. local compactness).

\bigskip
\section{o-minimal structure}\label{ominimal}
\medskip

Throughout this paper, except \S 7, $R$ denotes an Archimedean real 
closed field.

Concerning the direction set, let us analyse the following example:

\begin{example}\label{example21}
Let $R$ be the field of real algebraic numbers, and let
$\{ a_m \}$ be the sequence of points of $R^2$ defined by
$$
a_m = ({1 \over m}, {1 \over m}(1 + {1 \over 1!} +
{1 \over 2!} + \cdots + {1 \over m!})).
$$
Clearly  $a_m$ tends to $0 \in R^2$, and 
${ a_m \over \| a_m \| }$ tends to a pair of transcendental numbers
$({1 \over \sqrt{1 + e^2}}, {e \over \sqrt{1 + e^2}})$
which is not an element of $R^2$.

We can see that for any $p \in S^1 \subset \R^2$,
there is a sequence of points $\{ a_m \}$ of $R^2$ tending to $0 \in R^2$
such that ${ a_m \over \| a_m \| }$ tends to $p$.
Therefore we have $\dim_R R^2 = 2$, but there are infinitely 
many points in $S^1 \setminus R^2$ which are independent over $R$.

Let $\{ b_m \}$ be the sequence of points of $R^2$ defined by
$$
b_m = (0, {1 \over m}(1 + {1 \over 1!} + {1 \over 2!} + \cdots 
+ {1 \over m!})).
$$
Then we can see that there is a bi-Lipschitz homeomorphism
$h : (R^2,0) \to (R^2,0)$ such that for all $m$, $h(a_m) = b_m$.
On the other hand we have 
$$
\lim_{m \to \infty} { a_m \over \| a_m \| } \notin R^2, \ \
\lim_{m \to \infty} { b_m \over \| b_m \| } \in R^2.
$$
\end{example}

To avoid any confusion when we consider the limit, 
we give the precise definition of the direction set 
over an Archimedean real closed field $R$.

\begin{defn}\label{rcfdirectionset}
Let $A$ be a set-germ at $0 \in R^n$ such that
$0 \in \overline{A}$.
We define the {\em direction set} $D(A)$ of $A$ at $0 \in R^n$ by
$$
D(A) := \{a \in S^{n-1} \ | \
\exists  \{ x_i \} \subset A \setminus \{ 0 \} ,
\ x_i \to 0 \in R^n  \ \text{s.t.} \
{x_i \over \| x_i \| } \to a, \ i \to \infty \}.
$$
Here $S^{n-1} \subset R^n$ denotes the unit sphere 
centred at $0 \in R^n$.

We denote by $LD(A)$ a half-cone of $D(A)$ with the origin 
$0 \in R^n$ as the vertex:
$$
LD(A) := \{ t a \in R^n\ | \ a \in D(A), \ t \ge 0 \}.
$$ 
\end{defn}

Let us recall the definition of an o-minimal structure
on a real closed field $R$.

\begin{defn}\label{structure}
Let $\mathcal{D}$ be a sequence $(\mathcal{D}_n)_{n \in \N}$
where for each $n \in \N$, $\mathcal{D}_n$ is a family of
subsets of $R^n$. 
We say that $\mathcal{D}$ is an {\em o-minimal structure}
on $R$ if:

(D1) $\mathcal{D}_n$ is a boolean algebra.

(D2) If $A \in \mathcal{D}_n$, then $A \times R$ and
$R \times A \in \mathcal{D}_{n+1}$.

(D3) If $A \in \mathcal{D}_{n+1}$, then $\pi (A) \in \mathcal{D}_n$,
where $\pi : R^{n+1} \to R^n$ is the projection on the first
$n$ coordinates.

(D4) $\mathcal{D}_n$ contains $\{ x \in R^n : P(x) = 0 \}$
for every polynomial $P \in R [X_1, \cdots , X_n]$.

(D5) Each set in $\mathcal{D}_1$ is a finite union of
intervals and points.
\end{defn}

A subset $A$ of $R^n$ belonging to $\mathcal{D}_n$ is called 
{\em definable} in $\mathcal{D}$.
A map $f : A \to R^m$ is {\em definable} in $\mathcal{D}$,
if its graph is a definable subset of $R^n \times R^m$ 
in $\mathcal{D}$.

The class of semi-algebraic sets and the class of global 
sub-analytic sets are examples of o-minimal structures
on the field of real numbers $(\R,+,\cdot)$.
We refer the readers to \cite{dries}, \cite{driesmiller} and 
\cite{coste} for the basic properties of o-minimal structures. 
In particular, the following results and properties are frequently 
used in our paper.

\vspace{3mm}

(1) The dimension of definable sets is well-defined (this follows from the Cell Decomposition Theorem
(\cite{dries}, Chapter 3 (2.11)));

(2) Monotonicity (\cite{dries}, Chapter 3 (1.2));

(3) Curve Selection Lemma (\cite{dries}, Chapter 6 (1.5)). 

\vspace{3mm}

We mention one more fact.
The Lojasiewicz inequalities for definable sets in an
o-minimal structure are discussed in \cite{driesmiller} and
\cite{taleloi}.
The Lojasiewicz inequalities in the sense of \cite{lojasiewicz}
hold in {\em polynomially bounded} o-minimal structures.

Before ending this section, we introduce a notation 
which we will often use in this paper, namely we denote by
$\Phi$  the set of all odd, strictly increasing,
continuous definable germs from $(R,0)$ to $(R,0)$. 
Note that, by Monotonicity, $\Phi$ is ordered by the following relation:

\vspace{3mm}

\centerline{$\theta_1 \leq \theta_2$ iff
$\theta_1(t) \leq \theta_2(t)$, for all $t > 0$ sufficiently small.}

\vspace{3mm}

\bigskip
\section{Bi-Lipschitz equivalence does not always imply definable one}
\label{definablebilipschitz}
\medskip

In our main theorem (Theorem (\ref{omaintheorem})) we did not assume the 
definability of the Lipschitz homeomorphism $h : (R^n,0) \to (R^n,0)$.
In the case when  $h$ is definable, we can easily show 
the theorem as follows.

Let $\overline{h} : S^{n-1} \to S^{n-1}$, $S^{n-1} \subset R^n$,
be a mapping defined by
$$
\overline{h}(a) = 
\lim_{t \to 0} {h(ta) \over \| h(ta) \|}.
$$
Then it is easy to see that $\overline{h}$ is a well-defined 
 definable bi-Lipschitz homeomorphism.
Therefore it follows that $\overline{h}(D(A)) = D(h(A)))$.

Related to the above fact, it may be natural to ask if we can replace
a bi-Lipschitz homeomorphism $h$ with a definable 
bi-Lipschitz homeomorphism $h^{\prime}$. That is to say, whether
the existence of $h$ implies the existence of a definable 
bi-Lipschitz homeomorphism $h^{\prime}$ with
$h^{\prime}(A) = h(A)$ and $h^{\prime}(B) = h(B)$.
If the answer were positive, we would have a different proof
of our main theorem, without using the  main properties
mentioned in the introduction.
Nevertheless, the answer to this question is negative.
More precisely, bi-Lipschitz equivalence does not always
guarantee the existence of  a definable one.

\begin{thm}\label{bilipdefinable}
There exist $n\in\N$ and compact polyhedra $A_1$ and $A_2$ in $\R^n$,  
such that the germs of $(\R^n,A_1)$ and $(\R^n,A_2)$ 
at $0 \in \R^n$ are bi-Lipschitz homeomorphic but not definably 
homeomorphic in any o-minimal structure on $\R$. 
\end{thm}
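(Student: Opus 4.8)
The plan is to construct $A_1$ and $A_2$ as one-dimensional polyhedra (finite unions of segments) accumulating at the origin, whose accumulation behaviour encodes a ``rate'' that is bi-Lipschitz invariant among polyhedra but is realised by the two sets in a way that no definable homeomorphism can match. The guiding intuition is Example~\ref{example21}: over $\R$ one has complete freedom to choose transcendental-looking limiting directions or positions, but a \emph{definable} homeomorphism in a fixed o-minimal structure is severely constrained — its germ at $0$ behaves, along any definable arc, like a definable (hence ``tame'', e.g. power-like up to the \L ojasiewicz inequalities) function. So the idea is to let $A_1$ be a configuration of segments whose endpoints cluster at $0$ along a \emph{lacunary} or otherwise ``wild'' sequence of scales $t_k$, and let $A_2$ be the ``straightened'' configuration at scales $s_k$; arrange that the correspondence $t_k \mapsto s_k$ is bi-Lipschitz-realisable by an explicit (non-definable) piecewise-linear map of $\R^n$, yet the sequence $(t_k, s_k)$ does not lie (even asymptotically) on the graph of any definable germ $(\R,0)\to(\R,0)$.

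The key steps, in order, are: (i) Fix a countable family of o-minimal structures — better, invoke that any definable germ $(\R,0)\to(\R,0)$ in \emph{any} o-minimal structure on $\R$ is eventually monotone and its graph is definable, so along the relevant sequence of scales it must be comparable to some explicit elementary function; more carefully, one uses that an o-minimal structure on $\R$ has at most countably many ``germs at $0$'' of definable one-variable functions only after fixing the structure, so the argument must produce, for \emph{each} o-minimal structure separately, an obstruction — this is handled by a diagonal/growth argument, choosing the scales $t_k$ to grow so irregularly (e.g. $t_{k+1} = t_k^{a_k}$ for a sufficiently fast-growing integer sequence $a_k$) that no single definable germ can interpolate them while simultaneously a simple explicit bi-Lipschitz map does. (ii) Construct the explicit PL (or piecewise-linear up to bounded distortion) bi-Lipschitz homeomorphism $H\colon(\R^n,0)\to(\R^n,0)$ carrying $A_1$ to $A_2$: on the dyadic-type annulus between scales one linearly rescales and reconnects the segments, checking that the Lipschitz constants of $H$ and $H^{-1}$ stay bounded uniformly in $k$ — this requires choosing the geometry of the segments (angles, lengths) so that the reconnection is a controlled shear. (iii) Prove non-existence of a definable homeomorphism: suppose $G\colon(\R^n,A_1)\to(\R^n,A_2)$ is definable; restrict $G$ to a suitable definable arc through $0$ (produced by the Curve Selection Lemma applied to a relevant definable piece of $A_1$), use Monotonicity to show $G$ along this arc is comparable to a definable germ in $\Phi$ or its inverse, and derive that $G$ must match the scale-correspondence $t_k\leftrightarrow s_k$ up to bounded ratio — contradicting the lacunarity of $(t_k)$ versus $(s_k)$ chosen in (i). The combinatorial/polyhedral packaging (realising everything with honest compact polyhedra rather than curved sets) is a bookkeeping layer on top: one replaces smooth spirals/arcs by inscribed polygonal approximations at the discrete scales $t_k$, which is harmless because only the discrete data $(t_k,s_k)$ and the linear reconnections matter.

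I would expect the main obstacle to be step (iii) combined with the quantifier over \emph{all} o-minimal structures on $\R$: one cannot simply say ``there are countably many definable germs'' because the collection of o-minimal structures on $\R$ is not countable, and different structures contain wildly different one-variable germs (e.g. $\R_{\exp}$, $\R_{\mathrm{an}}$, Pfaffian structures, and their joins). The resolution I anticipate is that what matters is not any particular germ but a \emph{uniform tameness feature} shared by all o-minimal germs at $0$ — namely that a definable germ $g\colon(\R,0)\to(\R,0)$ satisfies, on a deleted neighbourhood, a two-sided bound of the form $c\,t^{N}\le g(t)\le C\,t^{1/N}$ for some $N$ and constants (a consequence of monotonicity together with the definable \L ojasiewicz-type inequality, or of the Growth Dichotomy), \emph{uniformly enough} that choosing $t_{k+1}/t_k$ to oscillate between being super-polynomially small and super-polynomially large across successive $k$ defeats every such $g$ at once. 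Making this precise — extracting the right uniform polynomial pinching from the o-minimality axioms (D1)--(D5) alone, without assuming polynomial boundedness — is the delicate point, and I would devote the bulk of the proof to it; the bi-Lipschitz construction in (ii) is, by contrast, an explicit and essentially routine estimate once the scales are fixed.
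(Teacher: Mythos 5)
Your plan has two gaps, each fatal on its own. First, the sets you propose are not compact polyhedra. A compact polyhedron is the underlying set of a \emph{finite} simplicial complex, so its germ at $0$ is a PL cone over a finite polyhedral link: it is self-similar near the vertex and carries no intrinsic sequence of scales $t_k$. An infinite family of segments whose endpoints cluster at $0$ along a lacunary sequence is compact but is not a polyhedron, so it is not admissible for the statement; and once you restrict to honest polyhedral germs, the scale-correspondence invariant you want to exploit in step (iii) is vacuous (moreover every compact polyhedron is semialgebraic, hence definable in every o-minimal structure, so definability of the sets themselves gives no traction). Second, the ``uniform tameness feature'' you hope to extract --- a two-sided bound $c\,t^{N}\le g(t)\le C\,t^{1/N}$ for every definable germ $g:(\R,0)\to(\R,0)$ in every o-minimal structure --- is false: in $\R_{\exp}$ the germ $t\mapsto e^{-1/t}$ (for $t>0$) is definable and decays faster than any power. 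The paper itself points this out (the \L ojasiewicz inequality in the classical sense holds only in polynomially bounded structures, cf.\ Example \ref{curve}), so the quantifier over all o-minimal structures cannot be defeated by polynomial pinching, and no diagonal choice of scales $t_k$ rules out all structures at once by growth considerations.

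For comparison, the paper's proof uses an entirely different, topological obstruction. It takes Kirby--Siebenmann PL manifolds $X_1$, $X_2$ (dimension $\ge 5$, $H^3(X_1;\Z_2)\neq 0$) that are homeomorphic but not PL homeomorphic; Sullivan's uniqueness of Lipschitz structures upgrades the homeomorphism to a bi-Lipschitz one, and a cone extension (plus a Lipschitz extension of the cone map to the ambient space) makes the germs of $A_1=Y_1\times\{0\}$ and $A_2=\{0\}\times Y_2$ at $0$ bi-Lipschitz homeomorphic, where $Y_i=0*(X_i\times\{1\})$. The impossibility of a definable homeomorphism is then obtained not from growth rates but from Shiota's Definable Triangulation Theorem and the Definable Hauptvermutung: a definable homeomorphism of the germs would force the links $X_1$ and $X_2$ to be PL homeomorphic, contradicting the choice of $X_1,X_2$. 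This is the mechanism that works uniformly over all o-minimal structures on $\R$, and it is the ingredient your proposal would need but cannot reach by the scale/tameness route.
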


\begin{proof}
We first recall the following result of R. C. Kirby and 
L. C. Siebenmann \cite{kirbysiebenmann}.

\vspace{3mm}

{\em For a PL manifold $X_1$ of dimension $\ge 5$ with 
$H^3(X_1;Z_2) \not= 0$, there exists a PL manifold $X_2$ which is 
homeomorphic but not PL homeomorphic to $X_1$.}

\vspace{3mm}

\noindent Let $X_1$ and $X_2$ be such compact manifolds contained 
in $\R^{m_1}$ and $\R^{m_2}$, respectively, and let $h : X_1 \to X_2$
be a homeomorphism. 
On the other hand, by D. Sullivan \cite{sullivan}, the Lipschitz 
manifold structure on a topological manifold of dimension $\not=4$ 
is unique up to bi-Lipschitz homeomorphisms. 
Therefore we can choose $h$ as a bi-Lipschitz homeomorphism
since a PL manifold is a Lipschitz manifold. 
For a point $x$ in $\R^n$ and a subset $X$ of $\R^n$, 
let $x*X$ denote the cone with vertex $x$ and base $X$,
and let $X_x$ be the germ of $X$ at $x$. 
Set 
\begin{eqnarray*}
& Y_i := & 0*(X_i \times \{ 1 \} ) \subset \R^{m_i} \times \R \quad 
\text{for}\ i = 1,2, \\
& A_1 := & Y_1 \times \{ 0 \} \subset \R^{m_1 +1} \times \R^{m_2 +1}, \\
& A_2 := & \{ 0 \} \times Y_2 \subset \R^{m_1 +1} \times \R^{m_2 +1}, \\
& n := & m_1 + m_2 + 2.
\end{eqnarray*}

We first show the following claim.

\vspace{3mm}

\noindent {\em Claim 1}. The following germs at $0$, 
$(\R^n,A_1)_0$ and $(\R^n,A_2)_0$, are bi-Lipschitz
homeomorphic.

\begin{proof} 
The idea of our proof comes from the proof of Proposition 10.4 
in R. J. Daverman \cite{daverman}. 
First we extend 
$h \times \text{id}\ : X_1 \times \{ 1 \} \to X_2 \times \{ 1 \}$ 
to a bi-Lipschitz homeomorphism $h^* : Y_1 \to Y_2$ by cone extension. 
To be precise, set $h^*(0) := 0$ and 
$$
h^*(tx,t) := (th(x),t) \quad \text{for}\ (t,x) \in (0,\,1) \times X_1.
$$
Then $h^*$ is bijective and $(h^*)^{-1}(tx,t)=(th^{-1}(x),t)$.
Moreover, we can see that $h^*$ is Lipschitz as follows. 
Let $(t,x),(t',x') \in [0,\,1] \times X_1$. 
Then we have
\begin{eqnarray*}
\| h^*(tx,t) - h^*(t'x',t')\| \le \| th(x) - t'h(x')\| + \| t - t'\|, \\
\| th(x) - t'h(x')\| \le \| th(x) - t'h(x)\| + \| t'h(x) - t'h(x')\| \\
\le c\| t - t'\| + ct'\| x - x'\|, \\
t'\| x - x'\| \le \| tx - t'x\| + \|tx - t'x'\| \le c\| t - t'\| 
+ \| tx - t'x'\|
\end{eqnarray*}
for some constant real number $c > 0$. 
Hence we have
$$
\| h^*(tx,t)-h^*(t'x',t')\| \le c'\| t - t'\| + c'\| tx - t'x'\|
$$
for some constant real number $c' > 0$. 
In the same way we can see that $(h^*)^{-1}$ is Lipschitz. 
Thus $h^*$ is bi-Lipschitz.

Secondly we extend $h^*$ to a Lipschitz (not bi-Lipschitz) map 
$\tilde{h} : \R^{m_1+1} \to \R^{m_2+1}$. 
Let $K$ be a simplicial decomposition of $\R^{m_1+1}$ such that $Y_1$ 
is the underlying polyhedron of a full subcomplex $K_1$ 
of $K$.  $K_1$ is called {\em full} in $K$ if each simplex 
in $K$ with all its vertices  in $K_1$ is necessarily contained in $K_1$. 
(When $K_1$ is not full in $K$, we replace $K$ and $K_1$ with their 
barycentric subdivisions $K'$ and $K'_1$.
Then $K'_1$ is full in $K'$. 
See C. P. Rourke and B. J. Sanderson \cite{rourkesanderson}
for  details.) 
Let $K^r$ denote the $r$-skeleton of $K$, namely, the simplexes 
in $K$ of dimension $\le r$. 
We define $\tilde{h}$ on the underlying polyhedron $|K^r|$ of $K^r$ 
by induction on $r$. 
If $r = 0$, set $\tilde{h} := 0$ on $|K^0| - Y_1$. 
Assume that $\tilde{h}$ is already defined on $|K^{r-1}|$ for some $r > 0$. 
For each $\sigma \in K^r-K^{r-1}$ with $\sigma \not\subset Y_1$, 
let $v_0, \cdots ,v_r$ be the vertices of $\sigma$ such that 
$v_0 \not\in Y_1$, which exists by the fullness of $K_1$. 
Note that $v_0,v_1* \cdots * v_r \in K^{r-1}$. 
Set
\begin{eqnarray*}
\tilde{h}(\sum_{i=0}^r t_i v_i) := \sum_{k=1}^r t_k \tilde{h}(\sum_{i=1}^r 
t_i v_i / \sum_{j=1}^r t_j) \\
\text{for}\ (t_0,...,t_r) \in [0,\,1]^{r+1}\ \text{with}\ \sum_{i=0}^r t_i
= 1 \ \text{and}\ \sum_{i=1}^r t_i \not= 0,
\end{eqnarray*}
which is well-defined because 
$\sum_{i=1}^r t_i v_i / \sum_{j=1}^r t_j\in v_1* \cdots * v_r$. 
Then $\tilde{h}$ is a map from $|K^r|$ to $\R^{m_2+1}$ and we claim that it
 is 
Lipschitz.
In order to see this, it suffices to show that $\tilde{h}|_\sigma$ is 
Lipschitz for the above $\sigma$, because $\tilde{h} = 0$ 
outside of a compact neighbourhood of $Y_1$ in $\R^{m_1+1}$. 
By the above definition of $\tilde{h}|_\sigma$, 
$\tilde{h}|_\sigma(v_0) = 0$ and $\tilde{h}|_\sigma$ is the cone 
extension of $\tilde{h}|_{v_1* \cdots * v_r}$. 
Hence, as shown above, $\tilde{h}|_\sigma$ is Lipschitz 
since so is $\tilde{h}|_{v_1*\cdots* v_r}$.

Set $A_3 := \text{graph}\ h^ * \subset \R^{m_1+1} \times \R^{m_2+1}$. 
We shall prove that $(\R^n,A_3)_0$ is bi-Lipschitz homeomorphic 
to $(\R^n,A_1)_0$ and $(\R^n,A_2)_0$. 
Consider $(\R^n,A_3)_0$ and $(\R^n,A_1)_0$. 
Set 
$$
\phi(x,y) := (x,y - \tilde{h}(x)) \quad \text{for}\ (x,y) \in 
\R^{m_1+1} \times \R^{m_2+1}. 
$$ 
Then $\phi$ is a homeomorphism of $\R^n$, $\phi^{-1}(A_1) = A_3$, 
$\phi(0) = 0$, and $\phi$ is Lipschitz since so is $\tilde{h}$. 
In addition, $\phi$ has its inverse  given by
$$
\R^{m_1+1} \times \R^{m_2+1 }\ni (x,y) \to (x,y + \tilde{h}(x))
\in \R^{m_1+1} \times \R^{m_2+1},
$$ 
which is also Lipschitz. 
Thus $\phi$ is a bi-Lipschitz homeomorphism from $(\R^n,A_3)_0$ 
to $(\R^n,A_1)_0$. 
In the same way, by extending $(h^*)^{-1}$ to a Lipschitz map 
from $\R^{m_2+1}$ to $\R^{m_1+1}$, we see that $(\R^n,A_3)_0$ and 
$(\R^n,A_2)_0$ are bi-Lipschitz homeomorphic.
\end{proof}

We next show the following claim.

\vspace{3mm}

\noindent {\em Claim 2}. $(\R^n,A_1)_0$ and $(\R^n,A_2)_0$ are not 
definably homeomorphic.

\begin{proof} 
Assume that they are definably homeomorphic. 
Then $(A_1)_0$ and $(A_2)_0$  are definably homeomorphic. 
Hence shrinking $A_1$ we have a definable embedding 
$f : A_1 \to A_2$ such that $f(0) = 0$ and $f(A_1)$ is 
a neighbourhood of 0 in $A_2$. 

Here we recall the following facts from \cite{shiota}.

\vspace{3mm}

\noindent {\bf Definable Triangulation Theorem.}
(Theorem II.2.1 in \cite{shiota})
{\em Any compact definable set $X$ is definably homeomorphic to 
some polyhedron $X'$.}

\begin{rem}\label{remark01} (Remark II.2.3 in \cite{shiota})
If the above $X$ is contained in the underlying polyhedron 
of a finite simplicial complex $K$,
then we can choose $X'$ in $|K|$ and a definable homeomorphism 
$g : X \to X'$ so that $g(X \cap |\sigma |) \subset |\sigma |$ 
for each $\sigma\in K$.
\end{rem}

\noindent {\bf Definable Hauptvermutung.} (Corollary III.1.4 in \cite{shiota})
{\em Any two compact polyhedra are PL homeomorphic 
if they are definably homeomorphic.}

\vspace{3mm}

Applying the definable triangulation theorem and Remark \ref{remark01} 
to $f(A_1)$ and 0, we can assume that $f(A_1)$ is a polyhedron.
Therefore we may assume $f(A_1) = A_2$ from the beginning. 
The 
links of $A_1$ and $A_2$ at 0 are $X_1$ and $X_2$ respectively. 
By our choice of $X_1$, they are not PL homeomorphic to  a sphere or to 
a ball, hence $A_1$ and $A_2$ are not PL manifolds at $0$ (0 is a 
singular point).
On the other hand the links at the other points are all PL homeomorphic 
to a sphere or to a ball. 
By the definable Hauptvermutung, $A_1$ and $A_2$ are PL homeomorphic as 
polyhedra. 
Moreover, because the origin 0 is 
the only  singular point of $A_1$ and $A_2$, the PL homeomorphism has to 
carry 0 to 0.
Thus $(A_1)_0$ and $(A_2)_0$ 
are PL homeomorphic, which is a contradiction because of our 
our choice of $X_1$ and $X_2$. 
\end{proof}

This completes the proof of the theorem.

\end{proof}

\begin{rem}\label{remark02}
In the proof of Claim 1 we constructed a Lipschitz extension
of $h^*$ to $\tilde{h}$ using the cone structure.  
However, in general, to extend a Lipschitz map is not difficult. 
Indeed, for a Lipschitz function with constant $L, \
f : A \to \R, \ A \subset X$, $A$ endowed with the induced 
metric from $(X,d)$, we have an extension formula 
(see S. Banach \cite{banach}):
$$
\alpha(x):=\inf_{a\in A}(f(a)+Ld(x,a)).
$$
Similarly one can extend it by 
$$
\beta(x):=\sup_{a\in A}(f(a)-Ld(x,a)).
$$ 
Note that $\beta(x)\leq \alpha(x)$.
Any convex combination $t\alpha(x)+(1-t)\beta(x)$, $0 \leq t \leq 1$, 
also gives a Lipschitz extension.

This construction can be used to extend Lipschitz maps as well, however, 
without preserving the Lipschitz constant.
\end{rem}

\bigskip
\section{Sea-Tangle Properties in o-minimal Structures}
\label{seatangle}
\medskip

We recall the notion of sea-tangle neighbourhood for a
subset of $\R^n$, originated from the classical notion of 
{\em horn-neighbourhood} for an analytic set or more generally
a subanalytic set in $\R^n$.

\begin{defn}\label{seatanglenbd}
Let $A \subset \R^n$ such that $0 \in \overline{A}$,
and let $d, \ C > 0$.
The {\em sea-tangle neighbourhood $ST_d(A;C)$ of $A$,
of degree $d$ and  width} $C$, is defined by:
$$
ST_d(A;C) := \{ x \in \R^n \ | \ dist (x,A) \le C \| x \|^d \}.
$$
\end{defn}

See \S 4 of \cite{koikepaunescu} for  some
sea-tangle properties.
For instance,  the following is shown.

\begin{prop}\label{keyproperty}(\cite{koikepaunescu} Proposition 4.7)
Let $A$ be a subanalytic set-germ at $0 \in \R^n$ such that 
$0 \in \overline{A}$. 
Then there is $d_1 > 1$ such that $A \subset ST_d(LD(A);C)$ as set-germs 
at $0 \in \R^n$ for any $d$ with $1 < d  < d_1$ and $C > 0$.
\end{prop}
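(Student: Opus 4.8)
The plan is to peel off the radial variable by a semialgebraic change of coordinates and then feed the result into the Lojasiewicz inequality for subanalytic functions. Let $\Psi\colon(0,\infty)\times S^{n-1}\to\R^n\setminus\{0\}$, $\Psi(r,a)=ra$, a semialgebraic (hence subanalytic) diffeomorphism, and set
$$
\widetilde A:=\Psi^{-1}(A\setminus\{0\})\subset(0,\infty)\times S^{n-1},\qquad
Z:=\overline{\widetilde A}\cap\bigl([0,1]\times S^{n-1}\bigr).
$$
Then $\widetilde A$ is subanalytic and $Z$ is compact and subanalytic. Straight from Definition \ref{directionset}, a point $(0,a)$ belongs to $Z$ precisely when there are $x_i\in A\setminus\{0\}$ with $x_i\to 0$ and $x_i/\|x_i\|\to a$; hence
$$
Z\cap\bigl(\{0\}\times S^{n-1}\bigr)=\{0\}\times D(A),
$$
and in particular $D(A)$, and therefore $LD(A)$, are subanalytic.

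First I would reduce the statement to a bound on how fast the directions of points of $A$ approach $D(A)$. Since $LD(A)$ is a cone and $rD(A)\subset LD(A)$ for every $r\ge 0$, for $x=ra\in A$ with $r=\|x\|>0$, $a=x/\|x\|$ we have $\dist(x,LD(A))\le\dist(ra,rD(A))=r\,\dist(a,D(A))$. So it suffices to produce $\alpha>0$ and $C'>0$ such that $\dist(a,D(A))\le C'r^{\alpha}$ for all $(r,a)\in\widetilde A$ with $r\le 1$. Granting this, $\dist(x,LD(A))\le C'\|x\|^{1+\alpha}$ for $x\in A$ near $0$; with $d_1:=1+\alpha$ we then get, for any $1<d<d_1$ and any $C>0$, that $\dist(x,LD(A))\le C'\|x\|^{1+\alpha}\le C\|x\|^{d}$ as soon as $\|x\|^{1+\alpha-d}\le C/C'$, which holds on a sufficiently small representative of the germ because $1+\alpha-d>0$. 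That is exactly $A\subset ST_d(LD(A);C)$ as germs at $0$.

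The inequality $\dist(a,D(A))\le C'r^{\alpha}$ is where the Lojasiewicz inequality does the work. On the compact subanalytic set $Z$ take the subanalytic functions $F(r,a):=\dist(a,D(A))$ and $G(r,a):=r$. By the displayed description of $Z$ we have $\{G=0\}\cap Z=\{0\}\times D(A)\subset\{F=0\}\cap Z$, so the Lojasiewicz inequality for subanalytic functions on the compact set $Z$ supplies $\alpha>0$ and $C'>0$ with $F\le C'G^{\alpha}$ on $Z$, i.e. $\dist(a,D(A))\le C'r^{\alpha}$ for all $(r,a)\in Z$, and in particular for all $(r,a)\in\widetilde A$ with $r\le 1$. (One could instead avoid citing the inequality: if no such $\alpha,C'$ worked, the Curve Selection Lemma applied inside $Z$ would give a subanalytic arc $t\mapsto(r(t),a(t))$, $r(t)\to 0$, along which $F$ dominates every power of $G$, contradicting the Puiseux behaviour of one-variable subanalytic functions.)

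I expect the only delicate point to be the bookkeeping that licenses $F$, $G$ and $Z$ as subanalytic objects — the preimage under the semialgebraic $\Psi$, the closure of a relatively compact subanalytic set, and above all the identification $Z\cap(\{0\}\times S^{n-1})=\{0\}\times D(A)$ — after which the argument is formal. The genuine limitation is that it leans twice on compactness (of $S^{n-1}$ and of $Z$): to extract convergent subsequences and to get a uniform Lojasiewicz exponent. This is why it is confined to the Archimedean case, and dispensing with compactness is precisely the matter taken up in \S 7.
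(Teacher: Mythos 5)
Your proof is correct, but it is organized differently from the treatment in the paper. The paper does not reprove Proposition \ref{keyproperty} at all: it quotes it from \cite{koikepaunescu} and records only that the proof there rests on Hironaka's curve selection lemma together with a Lojasiewicz inequality; the closest in-text model is the proof of the o-minimal analogue, Proposition \ref{okeyproperty}, which works with $g(x)=d(x,LD(A))/\|x\|$ on $\overline A$, uses the Curve Selection Lemma and Monotonicity to show $g(x)\to 0$ as $x\to 0$, and only then invokes a Lojasiewicz inequality to dominate $g$ by some $\theta\in\Phi$. You reorganize this: instead of curve selection you compactify in polar coordinates, use compactness of $S^{n-1}$ to identify the $r=0$ fibre of $\overline{\widetilde A}$ with $\{0\}\times D(A)$, and then apply the two-function Lojasiewicz inequality $F\le C'G^{\alpha}$ on the compact subanalytic set $Z$, which hands you an explicit exponent and hence $d_1=1+\alpha$ after the scaling reduction $\dist(x,LD(A))\le\|x\|\,\dist(x/\|x\|,D(A))$. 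What your route buys is exactly the power-type bound the classical statement asserts, and it makes transparent that this uses polynomial boundedness of the subanalytic category plus compactness of the sphere and of $Z$ --- the ingredients that fail in general o-minimal structures (Example \ref{curve}) and over non-Archimedean fields, which is why the paper's version replaces powers by $\theta\in\Phi$ and why its curve-selection-based scheme (with sequences replaced by filters) is the one that survives in \S\S 4--7; that portability is what the paper's organization buys. The only points you should spell out are the bookkeeping you yourself flag: fix a bounded subanalytic representative of the germ so that $\widetilde A$, $Z$, $D(A)$ and $\dist(\cdot,D(A))$ are genuinely (globally) subanalytic, and dispose of the trivial case where the germ of $A$ at $0$ is $\{0\}$, so that $D(A)\ne\emptyset$; with those remarks included, the argument is complete.
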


Both Hironaka's selection lemma for  subanalytic sets and a Lojasiewicz 
inequality have played an  important role in the proof of the above result.
As mentioned in \S 2, it is known that 
the curve selection lemma holds also for definable sets 
in an o-minimal structure; on the other hand, the usual Lojasiewicz inequality 
does not always hold in an o-minimal structure.
Accordingly, Proposition \ref{keyproperty} might be false
for a definable set in some o-minimal structure.
Indeed, the following example confirms this.
\begin{example}\label{curve}
Let $\pi : \mathcal{M}_2 \to \R^2$ be a blowing-up at $(0,0) \in \R^2$,
and let $a = (0,1) \in S^1$.
We denote by $L(a)$ the half line in $\R^2$ with the origin
as the starting point passing through $a$ and by $\hat{L}(a)$ 
the strict transform of $L(a)$ in $\mathcal{M}_2$ by $\pi$.
In a suitable coordinate neighbourhood,
$\pi : \R^2_{(X,Y)} \to \R^2$ can be expressed as $\pi (X,Y) = (XY,Y)$.
Here $(0,0) \in \R^2_{(X,Y)}$ is the intersection of $\hat{L}(a)$
and the exceptional divisor $E = \pi^{-1}(0,0)$.

Let
$B := \{ (X,Y)\in \R^2_{(X,Y)} \ | \ Y = e^{- {1 \over |X|^2}},
\ X \ge 0 \}$.
Then the curve $B$ is not contained in
$\{ (X,Y) \in \R^2_{(X,Y)} \ | \ 
|Y| \ge C^{\prime} |X|^{d^{\prime}} \}$
as germs at $(0,0) \in \R^2_{(X,Y)}$,
for any $d^{\prime} > 0$, $C^{\prime} > 0$.

Let $\R_{exp}$ be Wilkie's exponential field (\cite{wilkie}),
and let $\mathcal{D}$ be the o-minimal structure on it.
Set $A := \pi (B)$. 
Then we can see that $A \in \mathcal{D}$ and $LD(A) = L(a)$, 
but $A$ is not contained in any sea-tangle neighbourhood $ST_d(L(a);C)$
as germs at $(0,0) \in \R^2$, for $d > 1$, $C > 0$. 
Therefore Proposition \ref{keyproperty} does not hold
for this definable set $A$ in the o-minimal structure $\mathcal{D}$.
\end{example}

Taking into account the above fact, in order to develop
sea-tangle properties in an o-minimal structure
on an Archimedean real closed field $R$, the definition of 
sea-tangle neighbourhood has to be modified.
Note that $R^n$ has an induced metric from $\R^n$.
>From now on let us fix an o-minimal structure on $R$.
Here we recall that $\Phi$ is the set of all odd, strictly increasing,
continuous definable germs from $(R,0)$ to $(R,0)$.
Then we  define the notion of sea-tangle neighbourhood
of a definable set as follows:

\begin{defn}\label{oseatanglenbd}
Let $A \subset R^n$ such that $0 \in \overline{A}$,
and let $\theta \in \Phi$.
The {\em sea-tangle neighbourhood $ST_{\theta}(A)$ of $A$
with respect to} $\theta$ is defined by:
$$
ST_{\theta}(A) := \{ x \in R^n \ | \ dist (x,A) \le 
\theta (\| x \| ) \| x \| \}.
$$
\end{defn}

\begin{rem}\label{remark30} (1) Let $x \in R^n$ and $A \subset R^n$.
In general, dist $(x,A) = \inf_{a\in A} d(x,a)$ does not 
always belong to $R$; nonetheless it is always a non-negative real number.

(2) If $A$ is definable, then $D(A)$, $LD(A)$ and $ST_{\theta}(A)$
are also definable.
\end{rem}

Let $\mathcal{S}$ be the set of set-germs $A \subset R^n$
at $0 \in R^n$ such that $0 \in \overline{A}$. 

\begin{defn}\label{oSTequivalence}
Let $A$, $B \in \mathcal{S}$.
We say that $A$ and $B$ are $ST$-{\em equivalent},
if there are $\theta_1, \ \theta_2 \in \Phi$ 
such that $B \subset ST_{\theta_1}(A)$ and $A \subset ST_{\theta_2}(B)$
as germs at $0 \in R^n$. 
We write $A \stsim B$.
\end{defn}

\begin{rem}\label{remark31}
$ST$-equivalence $\stsim$ is an equivalence relation in $\mathcal{S}$.
\end{rem}

We first describe several sea-tangle properties for  general subsets
of $R^n$.

Let $\phi : (R^n,0) \to (R^n,0)$ be a bi-Lipschitz homeomorphism,
namely there are positive numbers $K_1, \ K_2 \in R$
with $0 < K_1 \le K_2$ such that
$$
K_1 \| x_1 - x_2 \| \le \| \phi (x_1) - \phi (x_2) \| 
\le K_2 \| x_1 - x_2 \|
$$
in a small neighbourhood of $0 \in R^n$.
Conversely, we have
$$
\frac{1}{K_2} \| y_1 - y_2 \| \le \| \phi^{-1}(y_1) - \phi^{-1}(y_2) \| 
\le \frac{1}{K_1} \| y_1 - y_2 \|
$$
in a small neighbourhood of $0 \in R^n$.
With these Lipschitz constants  we can formulate the following Sandwich 
Lemma.

\begin{lem}\label{osandwichlemma}
Let $A \subset R^n$ such that $0 \in \overline{A}$.
Then, for $\theta \in \Phi$, 

\vspace{3mm}

(i) $\phi(ST_{\theta}(A)) \subset ST_{\theta_1}(\phi(A))$
where $\theta_1(t) = {K_2 \over K_1} \theta ({t \over K_1})
\in \Phi$. and

\vspace{3mm}

(ii) $ST_{\theta_2}(\phi(A)) \subset \phi(ST_{\theta}(A))$
where $\theta_2(t) = {K_1 \over K_2} \theta ({t \over K_2})
\in \Phi$

\vspace{3mm}

\noindent in a small neighbourhood of $0 \in R^n$.
\end{lem}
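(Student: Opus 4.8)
The plan is to unwind both inclusions directly from the definition of the sea-tangle neighbourhood (Definition \ref{oseatanglenbd}) together with the bi-Lipschitz bounds on $\phi$ and $\phi^{-1}$, with the only real care going to making sure that the multiplicative factors that appear can be absorbed into a germ in $\Phi$. First I would prove (i). Take $x \in ST_{\theta}(A)$, so that $\dist(x,A) \le \theta(\|x\|)\,\|x\|$. I want to bound $\dist(\phi(x),\phi(A))$. For any $a \in A$ we have $\|\phi(x)-\phi(a)\| \le K_2\|x-a\|$, and taking the infimum over $a \in A$ gives $\dist(\phi(x),\phi(A)) \le K_2\,\dist(x,A) \le K_2\,\theta(\|x\|)\,\|x\|$. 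Now I need to re-express the right-hand side in terms of $\|\phi(x)\|$. Since $\phi(0)=0$, the lower bound gives $\|\phi(x)\| \ge K_1\|x\|$, i.e. $\|x\| \le \tfrac{1}{K_1}\|\phi(x)\|$, and because $\theta$ is increasing and $t \mapsto t$ is increasing, $\theta(\|x\|)\|x\| \le \theta\!\big(\tfrac{1}{K_1}\|\phi(x)\|\big)\cdot\tfrac{1}{K_1}\|\phi(x)\|$. Hence
\[
\dist(\phi(x),\phi(A)) \le \frac{K_2}{K_1}\,\theta\!\Big(\frac{\|\phi(x)\|}{K_1}\Big)\,\|\phi(x)\| = \theta_1(\|\phi(x)\|)\,\|\phi(x)\|,
\]
so $\phi(x) \in ST_{\theta_1}(\phi(A))$ with $\theta_1(t) = \tfrac{K_2}{K_1}\,\theta(\tfrac{t}{K_1})$. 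One checks $\theta_1 \in \Phi$: it is a definable germ $(R,0)\to(R,0)$ because $\theta$ is and $K_1,K_2\in R$; it is odd because $\theta$ is and the scalings preserve oddness; and it is strictly increasing and continuous as a composition and positive rescaling of such a function.

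For (ii), I would argue symmetrically using $\psi := \phi^{-1}$, which is bi-Lipschitz with constants $\tfrac{1}{K_2} \le \tfrac{1}{K_1}$. Take $y \in ST_{\theta_2}(\phi(A))$, so $\dist(y,\phi(A)) \le \theta_2(\|y\|)\,\|y\|$. For any $a\in A$, writing $b = \phi(a)$, we have $\|\psi(y)-a\| = \|\psi(y)-\psi(b)\| \le \tfrac{1}{K_1}\|y-b\|$; taking the infimum over $b \in \phi(A)$ gives $\dist(\psi(y),A) \le \tfrac{1}{K_1}\dist(y,\phi(A)) \le \tfrac{1}{K_1}\theta_2(\|y\|)\|y\|$. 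Now $\|y\| = \|\phi(\psi(y))\| \le K_2\|\psi(y)\|$, so $\|y\| \le K_2\|\psi(y)\|$ and, using that $\theta_2$ and the identity are increasing, $\theta_2(\|y\|)\|y\| \le \theta_2(K_2\|\psi(y)\|)\cdot K_2\|\psi(y)\|$. Plugging in $\theta_2(t) = \tfrac{K_1}{K_2}\theta(\tfrac{t}{K_2})$ gives $\theta_2(K_2 s) = \tfrac{K_1}{K_2}\theta(s)$, hence
\[
\dist(\psi(y),A) \le \frac{1}{K_1}\cdot K_2\|\psi(y)\| \cdot \frac{K_1}{K_2}\theta(\|\psi(y)\|) = \theta(\|\psi(y)\|)\,\|\psi(y)\|,
\]
so $\psi(y) = \phi^{-1}(y) \in ST_{\theta}(A)$, i.e. $y \in \phi(ST_{\theta}(A))$. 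As before one checks $\theta_2 \in \Phi$.

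I do not expect a genuine obstacle here; the whole argument is a bookkeeping exercise with the Lipschitz constants. The one point that deserves attention is that $\dist(\cdot,\cdot)$ is a real number but not necessarily an element of $R$ (Remark \ref{remark30}(1)), so all the inequalities above must be read as inequalities between non-negative reals — which is fine, since $\theta(\|x\|)\|x\|$ is also just a non-negative real computed from $\|x\|\in\R$. The other mild point is the phrase ``in a small neighbourhood of $0\in R^n$'': the bi-Lipschitz estimates for $\phi$ and $\phi^{-1}$ only hold near $0$, and the monotonicity/positivity of $\theta,\theta_1,\theta_2$ as germs in $\Phi$ only holds for $\|x\|$ small, so all inclusions are asserted as germ inclusions at $0$, which is exactly what the statement claims. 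The rest is the routine verification, sketched above, that $\theta_1$ and $\theta_2$ indeed lie in $\Phi$.
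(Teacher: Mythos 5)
Your argument is correct, and it is exactly the routine unwinding of Definition \ref{oseatanglenbd} with the Lipschitz constants $K_1,K_2$ that the paper intends: the lemma is stated there without proof, being regarded as a direct bookkeeping computation, and your constants $\theta_1(t)=\tfrac{K_2}{K_1}\theta(\tfrac{t}{K_1})$ and $\theta_2(t)=\tfrac{K_1}{K_2}\theta(\tfrac{t}{K_2})$ come out precisely as claimed. Your closing remarks on $\dist$ being a real (not necessarily $R$-valued) quantity and on reading everything as germ inclusions near $0$ are the right points of care.
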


The next proposition follows from the above Sandwich Lemma.

\begin{prop}\label{ST-equivalence}
$ST$-equivalence is preserved by a bi-Lipschitz homeomorphism.
\end{prop}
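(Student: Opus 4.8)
The plan is to deduce this immediately from the Sandwich Lemma (Lemma \ref{osandwichlemma}) together with the observation that $\Phi$ is closed under the transformations appearing there. Let $A, B \in \mathcal{S}$ with $A \stsim B$, so there are $\theta_1, \theta_2 \in \Phi$ with $B \subset ST_{\theta_1}(A)$ and $A \subset ST_{\theta_2}(B)$ as germs at $0$. Let $\phi : (R^n,0) \to (R^n,0)$ be a bi-Lipschitz homeomorphism with Lipschitz constants $0 < K_1 \le K_2$ in $R$ as above. I want to produce $\theta_1', \theta_2' \in \Phi$ with $\phi(B) \subset ST_{\theta_1'}(\phi(A))$ and $\phi(A) \subset ST_{\theta_2'}(\phi(B))$, which is exactly $\phi(A) \stsim \phi(B)$.

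First I would apply Lemma \ref{osandwichlemma}(i) to the inclusion $B \subset ST_{\theta_1}(A)$: since $\phi$ is monotone with respect to inclusion of sets, $\phi(B) \subset \phi(ST_{\theta_1}(A)) \subset ST_{\eta_1}(\phi(A))$ as germs at $0$, where $\eta_1(t) = \tfrac{K_2}{K_1}\theta_1(\tfrac{t}{K_1}) \in \Phi$. Symmetrically, applying the same part of the lemma to $A \subset ST_{\theta_2}(B)$ gives $\phi(A) \subset ST_{\eta_2}(\phi(B))$ with $\eta_2(t) = \tfrac{K_2}{K_1}\theta_2(\tfrac{t}{K_1}) \in \Phi$. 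Taking $\theta_1' = \eta_1$ and $\theta_2' = \eta_2$ finishes the argument, since both belong to $\Phi$ by the statement of Lemma \ref{osandwichlemma}. Thus $\phi(A) \stsim \phi(B)$, and $ST$-equivalence is preserved.

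There is essentially no obstacle here beyond bookkeeping: the content is entirely in the Sandwich Lemma, and the only points to check are that the composite germs $t \mapsto \tfrac{K_2}{K_1}\theta_i(\tfrac{t}{K_1})$ are odd, strictly increasing, continuous, and definable from $(R,0)$ to $(R,0)$ — which is immediate because $K_1, K_2 \in R$ and $\Phi$ is stable under scaling of the variable and of the value by positive elements of $R$, and because composition of definable germs is definable. One mild subtlety worth a sentence in the writeup is that all inclusions are understood as germs at $0$, so the ``small neighbourhood of $0$'' clauses in Lemma \ref{osandwichlemma} cause no trouble: intersecting the finitely many neighbourhoods involved still gives a neighbourhood of $0$ on which all the stated inclusions hold simultaneously. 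If one prefers a fully symmetric statement one could instead use part (i) for one inclusion and part (ii) for the other, but using (i) twice (once for $\phi$ applied to each of the two given inclusions) is the cleanest route.
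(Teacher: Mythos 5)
Your argument is correct and is exactly the route the paper intends: the paper offers no further detail, stating only that the proposition ``follows from the above Sandwich Lemma,'' and your application of Lemma \ref{osandwichlemma}(i) to each of the two inclusions defining $A \stsim B$, with the resulting germs $\frac{K_2}{K_1}\theta_i(\frac{t}{K_1})$ still lying in $\Phi$, is precisely that argument written out. Nothing further is needed.
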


In \cite{koikepaunescu} there are mentioned  some directional properties
for the original notion of sea-tangle neighbourhood $ST_d(A;C)$.
The same properties hold also for our sea-tangle neighbourhood
$ST_{\theta}(A)$.
Throughout this section,
let $A$, $B \subset R^n$ be set-germs at $0 \in R^n$
such that $0 \in \overline{A} \cap \overline{B}$,
namely $A$, $B \in \mathcal{S}$,
and let $h : (R^n,0) \to (R^n,0)$ be a bi-Lipschitz homeomorphism.

\begin{lem}\label{keygeneral}
Suppose that there is $\theta \in \Phi$ such that $A \subset ST_{\theta}(B)$ 
as set-germs at $0 \in R^n$.
Then we have $D(h(A)) \subset D(h(B))$.
In addition, we have $D(ST_{\theta^{\prime}}(h(A))) \subset D(h(B))$
for any $\theta^{\prime} \in \Phi$.
\end{lem}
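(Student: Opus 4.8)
The plan is to reduce the statement to a single fact about an arbitrary fixed set, and then read off the lemma by transporting that fact through the bi-Lipschitz map via the Sandwich Lemma. The fact I would isolate is: \emph{for every $C \subset R^n$ with $0 \in \overline{C}$ and every $\psi \in \Phi$ one has $D(ST_{\psi}(C)) = D(C)$.} Here $C \subset ST_{\psi}(C)$ gives $D(C) \subset D(ST_{\psi}(C))$ for free, so the content is the opposite inclusion. To prove it I would take $a \in D(ST_{\psi}(C))$ together with a witnessing sequence $x_i \in ST_{\psi}(C)\setminus\{0\}$, $x_i \to 0$, $x_i/\|x_i\| \to a$. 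Since $\psi$ is odd and strictly increasing with $\psi(0)=0$, we have $\psi(\|x_i\|) > 0$, so the defining inequality $\dist(x_i,C) \le \psi(\|x_i\|)\|x_i\|$ has strictly positive right-hand side and we may pick $y_i \in C$ with $\|x_i - y_i\| \le 2\,\psi(\|x_i\|)\,\|x_i\|$ (so no closedness of $C$ is needed). Continuity of $\psi$ at $0$ forces $\psi(\|x_i\|)\to 0$, hence $\|x_i-y_i\|/\|x_i\| \to 0$; this makes $y_i \to 0$ and $\|y_i\| \ge \|x_i\|(1-2\psi(\|x_i\|)) > 0$ eventually, and the elementary bound
$$
\left\| \frac{y_i}{\|y_i\|} - \frac{x_i}{\|x_i\|} \right\|
\;\le\; \frac{2\|x_i-y_i\|}{\|y_i\|}
\;\le\; \frac{2\,\|x_i-y_i\|/\|x_i\|}{1-2\psi(\|x_i\|)} \;\longrightarrow\; 0
$$
gives $y_i/\|y_i\| \to a$, i.e.\ $a \in D(C)$.

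Granting this sublemma, I would argue as follows. Applying the Sandwich Lemma \ref{osandwichlemma}(i) to $h$ (with its Lipschitz constants $K_1, K_2$), the hypothesis $A \subset ST_{\theta}(B)$ gives, as germs at $0$,
$$
h(A) \;\subset\; h\bigl(ST_{\theta}(B)\bigr) \;\subset\; ST_{\theta_1}\bigl(h(B)\bigr),
\qquad \theta_1(t) = \frac{K_2}{K_1}\,\theta\!\left(\frac{t}{K_1}\right) \in \Phi .
$$
Since $h$ is a homeomorphism fixing $0$, both $h(A)$ and $h(B)$ still have $0$ in their closures, so the sublemma applies to them. Therefore $D(h(A)) \subset D\bigl(ST_{\theta_1}(h(B))\bigr) = D(h(B))$, which is the first assertion; and for any $\theta' \in \Phi$ the sublemma gives $D\bigl(ST_{\theta'}(h(A))\bigr) = D(h(A)) \subset D(h(B))$, which is the second.

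I expect the sublemma to be the only real step, and inside it the one delicate estimate is the displayed inequality --- passing from ``$\|x_i - y_i\|$ is of smaller order than $\|x_i\|$'' to ``$x_i/\|x_i\|$ and $y_i/\|y_i\|$ converge to the same limit'', i.e.\ continuity of the normalisation map off the origin. The remaining ingredients are routine: that $\theta_1 \in \Phi$; that $S \subset T$ implies $ST_{\theta'}(S) \subset ST_{\theta'}(T)$; and that all limits above are taken in $\R$, since for $x \in R^n$ the norm $\|x\|$ is a non-negative real number by Remark \ref{remark30}(1). In particular, note that $h$ enters the proof only through the Sandwich Lemma; no hypothesis on $h$ beyond bi-Lipschitzness is used.
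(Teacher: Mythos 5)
Your argument is correct, but it is organised differently from the paper's. The paper proves the lemma in one pass, entirely with sequences and without invoking the Sandwich Lemma: given $a \in D(h(A))$ witnessed by $h(a_m)/\|h(a_m)\| \to a$ with $a_m \in A$, it uses $A \subset ST_{\theta}(B)$ to pick $b_m \in B$ with $\|a_m - b_m\| \ll \|a_m\|,\ \|b_m\|$ in the \emph{source} space, then pushes this proximity through $h$ using only bi-Lipschitzness to get $\|h(a_m)-h(b_m)\| \ll \|h(a_m)\|,\ \|h(b_m)\|$, and concludes that the two normalised sequences share the limit $a$; the second assertion is then dismissed as following from the first. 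You instead transport the containment itself to the target space via Lemma \ref{osandwichlemma}(i), getting $h(A) \subset ST_{\theta_1}(h(B))$, and reduce everything to the standalone fact $D(ST_{\psi}(C)) = D(C)$, which you prove from scratch. The two routes use the same core estimate (the continuity-of-normalisation bound $\|y/\|y\| - x/\|x\|\| \le 2\|x-y\|/\|y\|$, applied in the source by you and in the target, implicitly, by the paper), so neither is deeper; what your packaging buys is that the second assertion of the lemma, which the paper leaves as a one-line remark, drops out immediately, and your sublemma is precisely Corollary \ref{cor1}(1)--(2), which the paper only obtains afterwards \emph{as a consequence} of this lemma --- since you prove it independently rather than citing the corollary, there is no circularity, and your careful handling of the infimum (the factor $2$, so no closedness of $C$ is needed, consistent with Remark \ref{remark30}(1)) is exactly the right precaution in the $R$-setting. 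One could only quibble that you lean on Lemma \ref{osandwichlemma}, which the paper states without proof, whereas the paper's direct argument makes the bi-Lipschitz transport explicit; but that lemma precedes the statement in the text, so citing it is legitimate.
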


\begin{proof}
Let $a \in D(h(A))$.
Then there is a sequence of points $\{ a_m \} \subset A$ tending to
$0 \in R^n$ such that 
$\lim_{m \to \infty} {h(a_m ) \over \| h(a_m )\|} = a$.
By assumption, $A \subset ST_{\theta}(B)$.
Therefore, for each $m$ we can take $b_m \in B$ such that
$$
\| a_m - b_m \| \ll \| a_m \|, \ \| b_m \| .
$$
Since $h$ is a bi-Lipschitz homeomorphism,
$$
\| h(a_m ) - h(b_m )\| \ll \| h(a_m )\|, \ \| h(b_m )\| .
$$
Thus we have
$$
a = \lim_{m \to \infty} {h(a_m ) \over \| h(a_m )\|}
= \lim_{m \to \infty} {h(b_m ) \over \| h(b_m )\|} \in D(h(B)).
$$

The second statement follows from the first one.
\end{proof}

We have the following corollary.

\begin{cor}\label{cor1}
$(1)$ $D(ST_{\theta}(A)) = D(A)$ for any $\theta \in \Phi$.

$(2)$ $D(ST_{\theta}(h(A))) = D(h(A))$ for any $\theta \in \Phi$.

$(3)$ If $A$ and $B$ are $ST$-equivalent,
then we have $D(A) = D(B)$ and $D(h(A)) = D(h(B))$.
\end{cor}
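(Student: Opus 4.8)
The plan is to derive all three statements directly from Lemma \ref{keygeneral}, combined with two elementary observations: first, $A \subset ST_{\theta}(A)$ as set-germs at $0$ for every $\theta \in \Phi$, since $\dist(x,A) = 0 \le \theta(\|x\|)\|x\|$ whenever $x \in A$; second, $D$ is monotone with respect to inclusion of set-germs, which is immediate from Definition \ref{rcfdirectionset}. I would also record that the identity map $\mathrm{id}_{R^n}$ is a bi-Lipschitz homeomorphism (with $K_1 = K_2 = 1$), so that Lemma \ref{keygeneral} may be applied with $h = \mathrm{id}$ as well as with the given $h$.

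For $(1)$, the inclusion $D(A) \subset D(ST_{\theta}(A))$ is immediate from $A \subset ST_{\theta}(A)$ and monotonicity. For the reverse inclusion I would apply Lemma \ref{keygeneral} with $h = \mathrm{id}_{R^n}$ and with $B := A$: the hypothesis $A \subset ST_{\theta}(A)$ holds, so the second assertion of the lemma yields $D(ST_{\theta'}(A)) \subset D(A)$ for every $\theta' \in \Phi$, and taking $\theta' = \theta$ gives equality. For $(2)$ the argument is the same with the given bi-Lipschitz $h$ in place of the identity: with $B := A$ the hypothesis $A \subset ST_{\theta}(A)$ holds, the second assertion of Lemma \ref{keygeneral} gives $D(ST_{\theta'}(h(A))) \subset D(h(A))$, and this together with the trivial reverse inclusion gives equality. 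For $(3)$, $ST$-equivalence means $A \subset ST_{\theta_2}(B)$ and $B \subset ST_{\theta_1}(A)$ for some $\theta_1,\theta_2 \in \Phi$; applying the first assertion of Lemma \ref{keygeneral} with $h = \mathrm{id}$ to each inclusion gives $D(A) \subset D(B)$ and $D(B) \subset D(A)$, hence $D(A) = D(B)$, and repeating the two applications with the given $h$ gives $D(h(A)) = D(h(B))$.

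Since each step is a direct invocation of Lemma \ref{keygeneral}, I expect no substantive obstacle; the corollary is essentially a formal bookkeeping consequence. The only point deserving a moment's care is checking that the hypotheses of Lemma \ref{keygeneral} persist after the substitutions — in particular that $0$ lies in the closure of the set-germs $ST_{\theta}(A)$, $ST_{\theta'}(h(A))$, etc., appearing as arguments — but this follows at once from $A \subset ST_{\theta}(A)$ together with $0 \in \overline{A}$ (and the bi-Lipschitz property for the images under $h$).
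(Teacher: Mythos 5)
Your derivation is correct and is exactly the intended argument: the paper states Corollary \ref{cor1} as an immediate consequence of Lemma \ref{keygeneral} without writing out details, and your bookkeeping (the inclusion $A \subset ST_{\theta}(A)$, monotonicity of $D$, and applying the lemma both with $h=\mathrm{id}$ and with the given $h$) is precisely what that omitted proof amounts to.
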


For a definable set we have more specific sea-tangle properties.
Proposition \ref{keyproperty} is modified to the following.

\begin{prop}\label{okeyproperty}
Let $A$ be a definable set-germ at $0 \in R^n$ such that 
$0 \in \overline{A}$. 
Then there is $\theta \in \Phi$
such that $A \subset ST_{\theta}(LD(A))$ as set-germs at $0 \in R^n$.
\end{prop}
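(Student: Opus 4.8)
The plan is to control, radius by radius, how far points of $A$ can lie from the tangent cone $LD(A)$, to show that this ``directional defect'' is $o(\|x\|)$ along definable arcs, and then to absorb it into a gauge function $\theta\in\Phi$ adapted to $A$. First I would consider $\rho(x):=\dist(x,LD(A))/\|x\|$ on the germ $A\setminus\{0\}$. It is definable, because $LD(A)$ is definable (Remark \ref{remark30}) and the distance to a definable set is itself a definable $R$-valued function (this is what makes $ST_\theta$ of a definable set definable), and $0\le\rho\le 1$ since $0\in LD(A)$. Since $0\in\overline A$, the definable set $\{\|x\|:x\in A\}\subset R_{\ge 0}$ has $0$ in its closure, hence contains an interval $(0,\epsilon)$, so for $r\in(0,\epsilon)$ the slice $A\cap\{\|x\|=r\}$ is nonempty and one may set $g(r):=\sup\{\rho(x):x\in A,\ \|x\|=r\}$, a definable function on $(0,\epsilon)$ with $0\le g\le 1$.

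The heart of the matter is the claim $\lim_{r\to 0^+}g(r)=0$. By Monotonicity this limit exists; call it $c$ and assume for contradiction $c>0$. Then $g(r)>c/2$ for all small $r$, so the definable set $E:=\{x\in A\setminus\{0\}:\rho(x)>c/2\}$ meets every sufficiently small sphere about $0$ and therefore has $0$ in its closure. By the Curve Selection Lemma there is a definable continuous arc $\gamma:(0,1)\to E$ with $\gamma(t)\to 0$ as $t\to 0^+$. Each coordinate of $u(t):=\gamma(t)/\|\gamma(t)\|$ is a bounded definable function of $t$, hence eventually monotone, so $u(t)$ tends to some $a\in R^n$; passing to the limit in $\|u(t)\|=1$ gives $a\in S^{n-1}$, whence $a\in D(A)$ and $\|\gamma(t)\|\,a\in LD(A)$. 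Therefore
$$
\rho(\gamma(t))=\frac{\dist(\gamma(t),LD(A))}{\|\gamma(t)\|}\ \le\ \|u(t)-a\|\ \longrightarrow\ 0,
$$
contradicting $\rho(\gamma(t))>c/2$. Hence $c=0$.

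It remains to produce $\theta\in\Phi$ dominating $g$. Since $g$ is definable, bounded and tends to $0$, by Monotonicity it is continuous and monotone on some $(0,\epsilon')$, and the monotonicity is necessarily non-decreasing there (a non-increasing function with limit $0$ at $0^+$ is identically $0$, in which case any $\theta\in\Phi$ works). Then $\theta_0(r):=g(r)+r$ is continuous, strictly increasing and definable on $(0,\epsilon')$ with $\theta_0(0^+)=0$; extending $\theta_0$ to an odd function makes its germ an element $\theta$ of $\Phi$. For $x\in A$ with $0<\|x\|<\epsilon'$ we get $\dist(x,LD(A))=\rho(x)\|x\|\le g(\|x\|)\|x\|\le\theta(\|x\|)\|x\|$, i.e. $x\in ST_\theta(LD(A))$; together with $0\in LD(A)$ this yields $A\subset ST_\theta(LD(A))$ as set-germs at $0$.

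The main obstacle is the claim $g\to 0$: it is the one genuinely geometric input, playing the role the \L ojasiewicz inequality played in the subanalytic Proposition \ref{keyproperty}, and it rests on the Curve Selection Lemma together with the fact that a definable arc approaching $0$ has a well-defined limiting direction, which by construction lies in $D(A)$. A point to watch throughout is that \emph{a priori} $\dist$-values and arc-limits live in $\R$ rather than in $R$; for definable data, however, suprema, infima and monotone limits of one-variable definable functions stay in $R\cup\{\pm\infty\}$, which is precisely what o-minimality over the Archimedean field $R$ supplies and what keeps all the objects above definable.
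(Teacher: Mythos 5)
Your argument is correct, and its core coincides with the paper's: the paper also sets $g(x)=d(x,LD(A))/\|x\|$ on $\overline A\setminus\{0\}$ and proves exactly your Claim ($g\to 0$ at the origin) by the same combination of the Curve Selection Lemma and Monotonicity, using that the selected definable arc has a limit direction $a\in D(A)$ and comparing with the half-line $L(a)\subset LD(A)$. Where you diverge is the final step: the paper extends $g$ continuously to $\overline A$ with $g(0)=0$ and then simply invokes the \L ojasiewicz-type inequality for definable functions (van den Dries--Miller, cf.\ \cite{driesmiller}, \cite{taleloi}) to produce $\theta\in\Phi$ with $g(x)\le\theta(\|x\|)$, whereas you prove the needed special case of that inequality by hand: you pass to the one-variable definable function $g(r)=\sup\{\rho(x):x\in A,\ \|x\|=r\}$ (well defined on an interval $(0,\epsilon)$ by o-minimality of the set of norms, with the supremum in $R$ because definable bounded subsets of $R$ have least upper bounds), use Monotonicity to make it continuous and non-decreasing near $0$, and take $\theta(r)=g(r)+r$, extended oddly, to get strict monotonicity. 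This buys a self-contained proof that does not rely on the cited comparison inequality, at the cost of the extra bookkeeping about slices, suprema and the degenerate case $g\equiv 0$, all of which you handle correctly; the paper's route is shorter because the domination of a continuous definable function vanishing at $0$ by some $\theta\in\Phi$ is taken as a standard tool. Your closing caveat about distances and limits staying in $R$ for definable data is exactly the point that makes both versions work over an arbitrary Archimedean real closed field.
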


\begin{proof}
Let $g(x) = {d(x,LD(A)) \over \| x \|}$ for $x \in \overline{A}$, 
$x \neq 0$.

\vspace{3mm}

\noindent {\em Claim}. $\lim_{x\to 0} g(x) = 0$.

\begin{proof} 
If the claim does not hold, then by the Curve selection lemma, 
there exist $c > 0$ and a definable curve $\gamma : (0,1) \to
\overline{A} \setminus \{ 0 \}$ such that $\lim_{t \to 0} \gamma(t) = 0$
and ${d(\gamma(t), LD(A)) \over \| \gamma (t)\| } \geq c$
for sufficiently small $t > 0$.
By Monotonicity,
$\lim_{t \to 0} {\gamma(t) \over \| \gamma (t) \|} = a \in D(A)$.
Therefore we have 
$$
{d(\gamma(t), LD(A)) \over \| \gamma (t) \|} \leq
{d(\gamma(t), L(a)) \over \| \gamma (t) \|} \to 0
$$ 
when $t\to 0$.
This is a contradiction.
\end{proof}

By this claim, $g$ can be naturally  extended to a continuous 
definable function $g : \overline{A} \to R$ with $g(0)=0$. 
By the Lojasiewicz inequality (\cite{driesmiller}), there exists
$\theta \in \Phi$ such that $g(x) \leq \theta(\| x \| )$ 
for $ x \in A$ near $0 \in R^n$.
This means that $A \subset ST_{\theta}(LD(A))$ as germs at
$0 \in R^n$.
\end{proof}

\begin{prop}\label{oppositekey}
Suppose that $A$ is definable. 
Then, for any $\theta_1 \in \Phi$, there is $\theta_2 \in \Phi$ 
such that $ST_{\theta_1}(LD(A)) \subset ST_{\theta}(A)$
as germs at $0 \in R^n$, for any $\theta \in \Phi$ 
with $\theta \ge \theta_2$.
\end{prop}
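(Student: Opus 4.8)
The plan is to follow the pattern of the proof of Proposition~\ref{okeyproperty}: I would reduce the inclusion to one Łojasiewicz-type estimate for a definable function on the cone $LD(A)$ and then assemble everything by the triangle inequality. Put $g(v):=d(v,A)/\|v\|$ for $v\in LD(A)\setminus\{0\}$, a definable function, and suppose for the moment that
$$\lim_{v\to0,\ v\in LD(A)}g(v)=0$$
has been established. Then $g$ extends to a continuous definable function on $LD(A)$ vanishing at $0$, so by the Łojasiewicz inequality (\cite{driesmiller}) there is $\theta'\in\Phi$ with $d(v,A)\le\theta'(\|v\|)\|v\|$ for $v\in LD(A)$ near $0$. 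Given $\theta_1\in\Phi$ I would set $\theta_2(t):=2\,\theta_1(t)+3\,\theta'(3t)$, which again lies in $\Phi$, and verify the conclusion directly. If $\theta\ge\theta_2$ and $x\in ST_{\theta_1}(LD(A))$, then (since $d(x,LD(A))\le\theta_1(\|x\|)\|x\|$ and $d(x,LD(A))$ is an infimum over the nonempty set $LD(A)$) there is $v\in LD(A)$ with $\|x-v\|\le2\,\theta_1(\|x\|)\|x\|$; near $0$ this forces $\half\|x\|\le\|v\|\le3\|x\|$, and hence
$$d(x,A)\ \le\ \|x-v\|+d(v,A)\ \le\ 2\,\theta_1(\|x\|)\|x\|+\theta'(\|v\|)\|v\|\ \le\ \bigl(2\,\theta_1(\|x\|)+3\,\theta'(3\|x\|)\bigr)\|x\|\ \le\ \theta(\|x\|)\|x\|,$$
i.e. $x\in ST_\theta(A)$, as required.

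It remains to prove the limit $g(v)\to 0$, and this is the only step with real content. I would argue by contradiction using the Curve Selection Lemma: if the limit fails, there are $c>0$ and a definable curve $\gamma:(0,1)\to LD(A)\setminus\{0\}$ with $\gamma(t)\to0$ and $g(\gamma(t))\ge c$ for small $t>0$. By Monotonicity $a:=\lim_{t\to0}\gamma(t)/\|\gamma(t)\|$ exists in $R^n$; since $\gamma(t)\in LD(A)$ gives $\gamma(t)/\|\gamma(t)\|\in D(A)$ for every $t$, a diagonal argument shows $a\in D(A)$ (equivalently $D(LD(A))=D(A)$, $LD(A)$ being the half-cone on $D(A)$). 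Now, because $a\in D(A)$, the point $(0,a)$ lies in the closure of the definable set $\{(x,x/\|x\|):x\in A\setminus\{0\}\}\subset R^n\times S^{n-1}$, so the Curve Selection Lemma produces a definable curve $s\mapsto(\delta(s),\delta(s)/\|\delta(s)\|)$ in this set with $\delta(s)\in A$, $\delta(s)\to0$ and $\delta(s)/\|\delta(s)\|\to a$. By Monotonicity $s\mapsto\|\delta(s)\|$ is, near $0$, a definable homeomorphism onto an interval $(0,\varepsilon)$, so reparametrising by norm I obtain, for each small $r>0$, a point $x_r\in A$ with $\|x_r\|=r$ and $x_r/r\to a$ as $r\to0$.

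Finally, taking $r=\|\gamma(t)\|$ for small $t$ and using that both $\gamma(t)$ and $x_{\|\gamma(t)\|}$ are tangent to the direction $a$,
$$d(\gamma(t),A)\ \le\ \bigl\|\gamma(t)-x_{\|\gamma(t)\|}\bigr\|\ \le\ \bigl\|\gamma(t)-\|\gamma(t)\|\,a\bigr\|+\bigl\|\,\|\gamma(t)\|\,a-x_{\|\gamma(t)\|}\bigr\|\ =\ o(\|\gamma(t)\|),$$
contradicting $g(\gamma(t))\ge c$. The main obstacle is exactly this limit assertion: it says that points of the cone $LD(A)$ are approximated by points of $A$ with an error that is small relative to the distance to $0$, and the genuinely new ingredient — absent from Proposition~\ref{okeyproperty}, where $LD(A)$ is already a cone — is the reparametrisation by norm, which allows a definable curve in $A$ to be compared with the offending curve $\gamma$ at a common norm. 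Everything else is the triangle inequality together with the elementary closure properties of $\Phi$.
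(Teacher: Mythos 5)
Your proof is correct, and it reaches the conclusion by a slightly different decomposition than the paper. The paper's own proof runs the curve-selection/Łojasiewicz argument of Proposition \ref{okeyproperty} directly on the function $g(x)=d(x,A)/\|x\|$ restricted to the definable set $ST_{\theta_1}(LD(A))$, using Corollary \ref{cor1}(1) to identify $D(ST_{\theta_1}(LD(A)))=D(A)$, so that the inclusion $ST_{\theta_1}(LD(A))\subset ST_{\theta_2}(A)$ comes out in one stroke; you instead prove the Łojasiewicz-type estimate only on the cone $LD(A)$, obtaining $LD(A)\subset ST_{\theta'}(A)$, and then enlarge to the $\theta_1$-neighbourhood by the explicit triangle-inequality bookkeeping $\theta_2(t)=2\theta_1(t)+3\theta'(3t)$. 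The trade-off is minor but real: your version makes the dependence of $\theta_2$ on $\theta_1$ completely explicit, while the paper's version avoids the constant-chasing by keeping the neighbourhood inside the definable framework from the start. More substantively, you supply in full a step the paper leaves implicit under ``the same arguments as in Proposition \ref{okeyproperty}'': in that proposition the contradiction is immediate because the ray $L(a)$ lies in the target cone $LD(A)$, whereas here the target is $A$ itself, so one genuinely needs points of $A$ at the scale $\|\gamma(t)\|$ within $o(\|\gamma(t)\|)$ of the direction $a$. Your second application of the Curve Selection Lemma to $\{(x,x/\|x\|):x\in A\setminus\{0\}\}$ together with the reparametrisation by norm is a clean way to get this (one could alternatively invoke condition $(SSP)$ for definable sets, Remark \ref{remark42}(3)), and the remaining details (closedness of $D(A)$ for definable $A$, existence of the limit direction by Monotonicity, closure of $\Phi$ under the operations used) are handled correctly.
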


\begin{proof}
Let $\theta_1 \in \Phi$.
Then, by Corollary \ref{cor1} (1), we have
$D(ST_{\theta_1}(LD(A)) = D(LD(A)) = D(A)$.
Using the same arguments for
$g(x) = {d(x,A) \over \| x \|}$ ($x \in ST_{\theta_1}(LD(A)), x \neq 0$)
as in the proof of Proposition \ref{okeyproperty},
we can find $\theta_2 \in \Phi$ such that 
$ST_{\theta_1}(LD(A)) \subset ST_{\theta_2}(A)$.
Hence $LD(A) \subset ST_{\theta_2}(A)$ as germs at $0 \in R^n$. 
\end{proof}

By Propositions \ref{okeyproperty}, \ref{oppositekey}, we have:

\begin{thm}\label{eqtheorem} 
If $A$ is definable, then $A$ is $ST$-equivalent to $LD(A)$.
\end{thm}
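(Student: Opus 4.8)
The plan is to read off the two one-sided containments required by Definition~\ref{oSTequivalence} directly from the two preceding propositions, taking the set $B$ there to be $LD(A)$. Concretely, I must produce $\theta', \theta'' \in \Phi$ with $A \subset ST_{\theta'}(LD(A))$ and $LD(A) \subset ST_{\theta''}(A)$ as germs at $0 \in R^n$; once both are established, $A \stsim LD(A)$ by the very definition of $ST$-equivalence.

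The first containment is exactly the content of Proposition~\ref{okeyproperty}: since $A$ is definable, that proposition furnishes a $\theta' \in \Phi$ with $A \subset ST_{\theta'}(LD(A))$, and nothing further is needed. For the second, I would fix an arbitrary $\theta_1 \in \Phi$ (the identity germ will serve) and apply Proposition~\ref{oppositekey}, which yields $\theta_2 \in \Phi$ such that $ST_{\theta_1}(LD(A)) \subset ST_{\theta}(A)$ for every $\theta \ge \theta_2$; in particular $ST_{\theta_1}(LD(A)) \subset ST_{\theta_2}(A)$. Since $\dist(x, LD(A)) = 0 \le \theta_1(\|x\|)\,\|x\|$ for all $x \in LD(A)$, we have the trivial inclusion $LD(A) \subset ST_{\theta_1}(LD(A))$, and therefore $LD(A) \subset ST_{\theta_2}(A)$. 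Setting $\theta'' := \theta_2$ gives the second containment, and the theorem follows.

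I do not anticipate any real obstacle here, since all the analytic substance has already been absorbed into the two propositions: the vanishing of $d(x, LD(A))/\|x\|$ as $x \to 0$ along $\overline{A}$ (via the Curve Selection Lemma and Monotonicity), the Lojasiewicz inequality of \cite{driesmiller} converting this into domination by a germ in $\Phi$, and the symmetric estimate for $d(x,A)/\|x\|$ on a sea-tangle neighbourhood of $LD(A)$. The remaining work is purely the bookkeeping of checking that the two containments literally match Definition~\ref{oSTequivalence}, together with Remark~\ref{remark31} that $\stsim$ is genuinely an equivalence relation, so that the symmetric phrasing ``$A$ is $ST$-equivalent to $LD(A)$'' is justified.
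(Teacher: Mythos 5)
Your proposal is correct and is essentially the paper's own argument: the theorem is stated there as an immediate consequence of Propositions \ref{okeyproperty} and \ref{oppositekey}, and the proof of Proposition \ref{oppositekey} already records the conclusion $LD(A) \subset ST_{\theta_2}(A)$ that you extract via the trivial inclusion $LD(A) \subset ST_{\theta_1}(LD(A))$. Nothing in your write-up deviates from, or adds a gap to, that route.
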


In a similar way to
Corollary 4.15 in \cite{koikepaunescu}, using Propositions 
\ref{okeyproperty}, \ref{oppositekey}
and Lemma \ref{osandwichlemma},
we can show the following result:

\begin{cor}\label{cor4}
Suppose that $h(A)$, $h(B)$ are definable.
If $D(h(A)) \subset D(h(B))$, then there is 
$\theta \in \Phi$ such that $A \subset ST_{\theta}(B)$
as germs at $0 \in R^n$.
\end{cor}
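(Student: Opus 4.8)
The plan is to transport the problem to the target side, where the definability hypotheses live, establish the inclusion there, and then pull it back by $h^{-1}$ using the Sandwich Lemma. Write $A' := h(A)$ and $B' := h(B)$. Since $h$ is a homeomorphism fixing $0$ we have $0 \in \overline{A'} \cap \overline{B'}$, and by hypothesis $A'$, $B'$ are definable with $D(A') \subset D(B')$; by the definition of $LD$ the latter gives $LD(A') \subset LD(B')$ as cones, hence as germs at $0$. Throughout, all inclusions are read as germs at $0 \in R^n$.

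First I would produce $\theta_2 \in \Phi$ with $A' \subset ST_{\theta_2}(B')$. By Proposition \ref{okeyproperty} (equivalently Theorem \ref{eqtheorem}) applied to the definable germ $A'$, there is $\theta_3 \in \Phi$ with $A' \subset ST_{\theta_3}(LD(A'))$. Since $LD(A') \subset LD(B')$ and $ST_\theta$ is monotone in its set argument (if $X \subset Y$ then $\dist(x,Y) \le \dist(x,X)$, so $ST_\theta(X) \subset ST_\theta(Y)$), this gives $A' \subset ST_{\theta_3}(LD(B'))$. Now apply Proposition \ref{oppositekey} to the definable germ $B'$ with $\theta_1 = \theta_3$: it yields $\theta_2 \in \Phi$ with $ST_{\theta_3}(LD(B')) \subset ST_{\theta_2}(B')$. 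Combining these inclusions gives $A' \subset ST_{\theta_2}(B')$.

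Next I would pull this back. Let $L_1 \le L_2$ be bi-Lipschitz constants of $h^{-1}$ near $0$. Applying $h^{-1}$ to $h(A) \subset ST_{\theta_2}(h(B))$ and then Lemma \ref{osandwichlemma}(i) with $\phi = h^{-1}$, with the set $h(B)$ playing the role of $A$ in that lemma, and with $\theta = \theta_2$, we obtain $A = h^{-1}(h(A)) \subset h^{-1}(ST_{\theta_2}(h(B))) \subset ST_{\theta}(h^{-1}(h(B))) = ST_{\theta}(B)$, where $\theta(t) = \tfrac{L_2}{L_1}\,\theta_2(t/L_1) \in \Phi$. This is the desired conclusion.

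I do not expect a serious obstacle: the proof is essentially bookkeeping once the machinery of \S\ref{seatangle} is in place, and the only care needed is (a) checking that each auxiliary germ produced ($\theta_3$, $\theta_2$, $\theta$) is again odd, strictly increasing and continuous, i.e. lies in $\Phi$ — which is exactly what Propositions \ref{okeyproperty}, \ref{oppositekey} and Lemma \ref{osandwichlemma} are designed to guarantee — and (b) keeping the bi-Lipschitz constants of $h^{-1}$ (rather than of $h$) straight when invoking the Sandwich Lemma. The one conceptual point worth flagging is that $A$ and $B$ are not assumed definable, only their images are; hence every definability-based result is applied to $A' = h(A)$ and $B' = h(B)$, while the concluding membership $A \subset ST_\theta(B)$ is a statement about arbitrary sets and needs no definability.
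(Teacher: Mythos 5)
Your proof is correct and follows exactly the route the paper indicates for Corollary \ref{cor4}: apply Proposition \ref{okeyproperty} to $h(A)$, use $LD(h(A))\subset LD(h(B))$ with monotonicity of $ST_\theta$ in the set argument, apply Proposition \ref{oppositekey} to $h(B)$, and pull the resulting inclusion back through $h^{-1}$ via Lemma \ref{osandwichlemma}. The bookkeeping with $\Phi$ and the Lipschitz constants of $h^{-1}$ is handled correctly, so there is nothing to add.
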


By Corollary \ref{cor4} and Lemma \ref{keygeneral}, we have:

\begin{thm}\label{iff}
Suppose that $h(A)$, $h(B)$ are definable.
Then the following conditions are equivalent.

(1) $D(h(A)) \subset D(h(B))$.

(2) There is $\theta \in \Phi$ such that $A \subset ST_{\theta}(B)$
as germs at $0 \in R^n$.
\end{thm}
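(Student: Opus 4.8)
The plan is to prove the two implications separately; both are essentially already available from the results developed in this section.

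First I would dispatch the implication (2) $\Rightarrow$ (1), which is nothing but Lemma \ref{keygeneral}: if $A \subset ST_{\theta}(B)$ as germs at $0 \in R^n$ for some $\theta \in \Phi$, then $D(h(A)) \subset D(h(B))$. It is worth stressing that this direction uses only that $h$ is a bi-Lipschitz homeomorphism; the definability of $h(A)$ and $h(B)$ plays no role here, since the inclusion into the sea-tangle neighbourhood is exactly what allows a sequence $\{a_m\} \subset A$ realising a direction $a \in D(h(A))$ to be shadowed by a comparably close sequence $\{b_m\} \subset B$, which then realises the same direction in $D(h(B))$.

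Next I would prove (1) $\Rightarrow$ (2), and this is precisely the content of Corollary \ref{cor4}, where definability is indispensable. The route is: since $h(A)$ and $h(B)$ are definable, Theorem \ref{eqtheorem} makes each of them $ST$-equivalent to its real tangent cone. The hypothesis $D(h(A)) \subset D(h(B))$ immediately yields the honest inclusion of cones $LD(h(A)) \subset LD(h(B))$, and hence $LD(h(A)) \subset ST_{\theta_1}(LD(h(B)))$ for any $\theta_1 \in \Phi$ (a set is always contained in any sea-tangle neighbourhood of itself). Applying Proposition \ref{oppositekey} to $h(B)$ upgrades this to $LD(h(A)) \subset ST_{\theta}(h(B))$ for a suitable $\theta \in \Phi$; combining with $h(A) \subset ST_{\theta'}(LD(h(A)))$ from Proposition \ref{okeyproperty} and composing the two sea-tangle inclusions gives $h(A) \subset ST_{\theta''}(h(B))$ as germs at $0$. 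Finally, transporting this inclusion back through $h^{-1}$ by means of the Sandwich Lemma (Lemma \ref{osandwichlemma}) produces $A \subset ST_{\theta}(B)$ for some $\theta \in \Phi$, as required.

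The only genuinely nontrivial point — which is internal to Corollary \ref{cor4} — is the bookkeeping on the width functions: one must verify that composing sea-tangle neighbourhoods, pushing them forward and backward through a bi-Lipschitz map, and comparing against the tangent cone all keep us within the class $\Phi$ and hold germ-wise near $0$. Because $\Phi$ is totally ordered by $\leq$ and closed under the operations actually used (dilations of argument and value, and composition with Lipschitz reparametrisations, as in Lemma \ref{osandwichlemma}), this control is available; once it is in place the equivalence is a formal consequence of Lemma \ref{keygeneral} together with Corollary \ref{cor4}.
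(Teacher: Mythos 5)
Your proposal is correct and takes essentially the same route as the paper, which obtains Theorem \ref{iff} directly from Lemma \ref{keygeneral} for (2) $\Rightarrow$ (1) and Corollary \ref{cor4} for (1) $\Rightarrow$ (2). Your unpacking of Corollary \ref{cor4} via Theorem \ref{eqtheorem}, Propositions \ref{okeyproperty} and \ref{oppositekey}, the composition of sea-tangle neighbourhoods, and the Sandwich Lemma \ref{osandwichlemma} is exactly the argument the paper indicates (following Corollary 4.15 of \cite{koikepaunescu}).
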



\bigskip
\section{Sequence Selection Property}
\label{week transversality}
\medskip

In this section we discuss directional properties of sets 
with the sequence selection property, denoted by (SSP) for short,
over an Archimedean real closed field $R$.
The set $h(LD(A))$ takes a very important role in the proof
of our main theorem.
In \cite{koikepaunescu} it is  shown that over the field of
real numbers $\R$, $h(LD(A))$  satisfies condition
(SSP) provided that $A$ and $h(A)$ are both  subanalytic.
We give an improvement of this result here.

Let us recall the notion of  sequence selection 
property. 

\begin{defn}\label{SSP}
Let $A \subset R^n$ be a set-germ at $0 \in R^n$
such that $0 \in \overline{A}$.
We say that $A$ satisfies {\em condition} $(SSP)$,
if for any sequence of points $\{ a_m \}$ of $R^n$
tending to $0 \in R^n$ such that 
$\lim_{m \to \infty} {a_m \over \| a_m \| } \in D(A)$,
there is a sequence of points $\{ b_m \} \subset A$ such that
$$
\| a_m - b_m \| \ll \| a_m \|, \ \| b_m \| . 
$$ 
\end{defn}

We have some remarks on $(SSP)$ (cf. \cite{koikepaunescu}).

\begin{rem}\label{remark41} 
Condition $(SSP)$ is $C^1$ invariant,
but not bi-Lipschitz invariant.
\end{rem}

\begin{rem}\label{remark42}
Let $A \subset R^n$ be a set-germ at $0 \in R^n$
such that $0 \in \overline{A}$.

(1) The cone $LD(A)$ satisfies condition $(SSP)$.

(2) If $A \subset \R^n$ is subanalytic, then it satisfies condition $(SSP)$.

(3) If $A \subset R^n$ is definable in an o-minimal structure, 
then it satisfies condition $(SSP)$.
\end{rem}

\begin{rem}\label{remark40} 
We can describe condition $(SSP)$ without using the
convergence of a sequence of points as follows: 

\vspace{3mm}

$\forall \epsilon > 0 \in R$ $\forall \delta > 0 \in R$ 
$\forall x \ne 0$ ($\| x \| \le \delta$,
$dist({x \over \| x \|}, D(A)) \le \epsilon$,
$\exists y \in A, \| x  - y \| \le \epsilon \| x \| )$
\end{rem} 

We recall the following lemma.

\begin{lem}\label{directionrel}(Lemma 5.6 in \cite{koikepaunescu} )
Let $h : (\R^n,0) \to (\R^n,0)$ be a bi-Lipschitz homeomorphism,
and let $A \subset \R^n$ such that $0\in \overline{A}$.
Then $D(h(A)) \subset D(h(LD(A))).$
If $A$ satisfies condition $(SSP)$, the equality holds.
\end{lem}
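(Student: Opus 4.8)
The plan is to establish the two inclusions separately, the first one holding in full generality and the second one using condition $(SSP)$. For the inclusion $D(h(A)) \subset D(h(LD(A)))$, I would start with a direction $a \in D(h(A))$ and choose a sequence $\{a_m\} \subset A$ with $a_m \to 0$ and $h(a_m)/\|h(a_m)\| \to a$. The point is that each $a_m$, being a nonzero point of $A$, has a direction $a_m/\|a_m\|$ which by definition lies in $D(A)$, hence the ray through $a_m$ lies in $LD(A)$; in particular $a_m$ itself (or a point arbitrarily close to it along that ray) belongs to $LD(A)$. Since $a_m \in LD(A)$ already, the sequence $\{a_m\}$ witnesses that $a = \lim h(a_m)/\|h(a_m)\| \in D(h(LD(A)))$. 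Thus this inclusion is essentially immediate from $A \subset LD(A)$ (as germs, after noting $a_m \in A \setminus\{0\}$ forces $a_m \in LD(A)$), and indeed it follows from the trivial monotonicity $X \subset Y \Rightarrow D(h(X)) \subset D(h(Y))$ applied to $A \subset LD(A)$.

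For the reverse inclusion $D(h(LD(A))) \subset D(h(A))$ under the hypothesis that $A$ satisfies $(SSP)$, I would take $a \in D(h(LD(A)))$ and pick $\{c_m\} \subset LD(A)$ with $c_m \to 0$ and $h(c_m)/\|h(c_m)\| \to a$. Write $c_m = t_m \omega_m$ with $\omega_m \in D(A)$ and $t_m = \|c_m\| \ge 0$, $t_m \to 0$. Now apply condition $(SSP)$ to the sequence $\{c_m\}$: since $c_m/\|c_m\| = \omega_m \in D(A)$, the hypothesis $(SSP)$ for $A$ produces a sequence $\{b_m\} \subset A$ with $\|c_m - b_m\| \ll \|c_m\|, \|b_m\|$. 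Because $h$ is bi-Lipschitz, this relation transfers: $\|h(c_m) - h(b_m)\| \ll \|h(c_m)\|, \|h(b_m)\|$, and hence
\[
a = \lim_{m\to\infty} \frac{h(c_m)}{\|h(c_m)\|} = \lim_{m\to\infty} \frac{h(b_m)}{\|h(b_m)\|} \in D(h(A)).
\]
This is the same Sandwich-type argument already used in the proof of Lemma \ref{keygeneral}.

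The main obstacle, such as it is, lies in making the notation $\ll$ precise and checking that it is genuinely preserved under a bi-Lipschitz map — i.e. that $\|c_m - b_m\|/\|c_m\| \to 0$ and $\|c_m - b_m\|/\|b_m\| \to 0$ together imply the analogous statements for the $h$-images, using the two-sided Lipschitz bounds $K_1\|x_1-x_2\| \le \|h(x_1)-h(x_2)\| \le K_2\|x_1-x_2\|$. This is routine: $\|h(c_m)-h(b_m)\| \le K_2\|c_m - b_m\|$ while $\|h(c_m)\| \ge K_1\|c_m\|$ (comparing with $h(0)=0$), so the ratio is controlled by $(K_2/K_1)\,\|c_m - b_m\|/\|c_m\| \to 0$, and symmetrically for $\|b_m\|$. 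One should also record that $(SSP)$ can be applied to $\{c_m\}$ even though the $c_m$ need not be honest points "above" a fixed direction — only $\lim c_m/\|c_m\| \in D(A)$ is needed, and here the directions $\omega_m$ lie in $D(A)$ for every $m$, so any convergent subsequence of $\{\omega_m\}$ has its limit in the closed set $D(A)$, which is exactly the hypothesis in Definition \ref{SSP}. Passing to such a subsequence (or invoking $D(A)$ closed) completes the argument.
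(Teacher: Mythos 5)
Your argument for the reverse inclusion under $(SSP)$ is essentially correct: writing points of $LD(A)\setminus\{0\}$ as $t_m\omega_m$ with $\omega_m\in D(A)$, passing to a subsequence along which $\omega_m$ converges (its limit lies in $D(A)$ since $D(A)$ is closed in $S^{n-1}$), applying $(SSP)$ to get nearby points of $A$, and transferring the relation $\ll$ through the bi-Lipschitz bounds is exactly the right mechanism, the same one used in Lemma \ref{keygeneral}.

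The first inclusion, however, contains a genuine error. You assert that for $a_m\in A\setminus\{0\}$ the direction $a_m/\|a_m\|$ lies in $D(A)$ "by definition", i.e.\ that $A\subset LD(A)$. This is false: $D(A)$ consists only of \emph{limit} directions of sequences tending to $0$, not of directions of individual points. For $A=\{(x,x^2)\ :\ x>0\}\subset\R^2$ one has $D(A)=\{(1,0)\}$ and $LD(A)=[0,\infty)\times\{0\}$, which is disjoint from $A$; so the "trivial monotonicity" route collapses. The correct proof of $D(h(A))\subset D(h(LD(A)))$ must first pass to a subsequence of $\{a_m\}$ along which $a_m/\|a_m\|$ converges to some $\omega\in S^{n-1}$ (compactness of the sphere over $\R$); then $\omega\in D(A)$, the points $c_m:=\|a_m\|\,\omega$ lie in $LD(A)$ and satisfy $\|a_m-c_m\|\ll\|a_m\|,\|c_m\|$ (equivalently, one invokes that the cone $LD(A)$ satisfies $(SSP)$, Remark \ref{remark42}(1)), and only then does the bi-Lipschitz transfer give $a\in D(h(LD(A)))$. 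This compactness step is precisely the property the paper singles out immediately after the statement as the ingredient from \cite{koikepaunescu}, and it is not a removable convenience: your version of the first inclusion uses no property of $\R$ at all, so if it were valid it would prove the inclusion over any Archimedean real closed field, contradicting Example \ref{example21}, where over the real algebraic numbers $D(h(A))=\{(0,1)\}$ while $D(h(LD(A)))=\emptyset$.
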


In order to show the above lemma,  the following property was used in 
\cite{koikepaunescu} :

\vspace{3mm}

Let $\{ a_m \}$ be a sequence of points of $\R^n$ tending to
$0 \in \R^n$.
Then there is a subsequence of points $\{ a_k \}$
of $\{ a_m \}$ such that
$$
\lim_{k \to \infty} {a_k \over \| a_k \|} \in S^{n-1}
= \{ x \in \R^n \ | \ \| x \| = 1 \}.
$$
This property does not always hold on an Archimedean real closed field $R$.
In fact, Lemma \ref{directionrel} is false for the real closed
field of algebraic numbers.
Let us recall Example \ref{example21}.
Let $A = \{ a_m \}$. Then $h(A) = \{ b_m \}$.
Since $D(A) = \emptyset$ we have $D(h(LD(A))) = \emptyset$.
Nevertheless $D(h(A)) = \{ (0,1) \}$.

Over an Archimedean real closed field,
we can show the following weaker result,
which is enough to show our main theorem.

\begin{lem}\label{odirectionrel}
Let $h : (R^n,0) \to (R^n,0)$ be a bi-Lipschitz homeomorphism,
and let $A \subset R^n$ such that $0\in \overline{A}$.
Suppose that $A$ is definable.
Then we have $LD(h(A)) = LD(h(LD(A))).$
\end{lem}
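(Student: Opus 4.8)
The plan is to prove the two inclusions $LD(h(A)) \subset LD(h(LD(A)))$ and $LD(h(LD(A))) \subset LD(h(A))$ separately, reducing everything to statements about direction sets, since $LD(X)=LD(Y)$ is equivalent to $D(X)=D(Y)$. The second inclusion is the easy one: since $A \subset LD(A)$ as set-germs at $0$ (every point of $A$ near $0$ lies on the ray through it, which meets $D(A)$ — more carefully, $A \subset ST_\theta(LD(A))$ for some $\theta \in \Phi$ by Proposition \ref{okeyproperty}), Lemma \ref{keygeneral} applied with $B = LD(A)$ gives $D(h(A)) \subset D(h(LD(A)))$, hence $LD(h(A)) \subset LD(h(LD(A)))$. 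This is essentially the first half of Lemma \ref{directionrel}, and it requires only definability of $A$, not an (SSP)-type hypothesis.

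For the reverse inclusion I would use the sea-tangle machinery developed in \S 4 rather than a direct sequence-selection argument (which is exactly what fails over a non-Archimedean-flavoured situation, as the failure of Lemma \ref{directionrel} for the algebraic numbers shows — though here $R$ is Archimedean). Since $A$ is definable, Theorem \ref{eqtheorem} gives $A \stsim LD(A)$, i.e. there are $\theta_1,\theta_2 \in \Phi$ with $LD(A) \subset ST_{\theta_1}(A)$ and $A \subset ST_{\theta_2}(LD(A))$. Apply the Sandwich Lemma \ref{osandwichlemma}(i) with $\phi = h$ to $LD(A) \subset ST_{\theta_1}(A)$: this yields $h(LD(A)) \subset h(ST_{\theta_1}(A)) \subset ST_{\theta_1'}(h(A))$ for a suitable $\theta_1' \in \Phi$ depending on the Lipschitz constants of $h$. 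Then Corollary \ref{cor1}(2) gives
$$
D(h(LD(A))) \subset D\bigl(ST_{\theta_1'}(h(A))\bigr) = D(h(A)),
$$
which is precisely $LD(h(LD(A))) \subset LD(h(A))$. Combined with the first paragraph, the two set-germs $LD(h(A))$ and $LD(h(LD(A)))$ have the same direction set and hence coincide.

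I expect the main subtlety to be bookkeeping about set-germs versus actual sets and making sure the inclusion $LD(A) \subset ST_{\theta_1}(A)$ genuinely follows from the $ST$-equivalence of Theorem \ref{eqtheorem} — one must check that $ST$-equivalence $A \stsim LD(A)$ in the sense of Definition \ref{oSTequivalence} really does produce $LD(A) \subset ST_{\theta_1}(A)$ as germs, which is just unwinding the definition. A secondary point is confirming that $D\bigl(ST_{\theta_1'}(h(A))\bigr) = D(h(A))$; this is exactly Corollary \ref{cor1}(2), valid for \emph{any} $\theta' \in \Phi$, and it needs no definability of $h(A)$, so it applies directly. Everything else is a routine application of the Sandwich Lemma with the explicit $\theta_i$ built from $K_1, K_2$, so no real obstacle remains once the germ-level inclusions are set up correctly.
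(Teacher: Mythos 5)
Your proposal is correct and follows essentially the same route as the paper: both inclusions come from the definable $ST$-containments between $A$ and $LD(A)$ (Proposition \ref{okeyproperty} and Proposition \ref{oppositekey}, combined in Theorem \ref{eqtheorem}), pushed through $h$ via the Sandwich Lemma or Lemma \ref{keygeneral}, and concluded from $D(ST_{\theta'}(h(\cdot)))=D(h(\cdot))$ (Corollary \ref{cor1}(2)). The only blemish is the parenthetical claim that $A \subset LD(A)$ as set-germs, which is false in general, but you immediately replace it by the correct containment $A \subset ST_{\theta}(LD(A))$, so the argument is unaffected.
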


\begin{proof}
We first show the inclusion $LD(h(A)) \subset LD(h(LD(A))).$
By Proposition \ref{okeyproperty}. there is $\theta_1 \in \Phi$
such that $A \subset ST_{{\theta}_1}(LD(A))$ as set-germs at $0 \in R^n$.
Then, by the sandwich lemma, we have
$$
h(A) \subset h(ST_{{\theta}_1}(LD(A))) \subset ST_{{\theta}_2}(h(LD(A)))
$$
for some $\theta_2 \in \Phi$.
Therefore it follows from Corollary \ref{cor1} (2) that
$$
LD(h(A)) \subset LD(ST_{{\theta}_2}(h(LD(A)))) = LD(h(LD(A))).
$$

The opposite inclusion $LD(h(A)) \supset LD(h(LD(A)))$
follows from a similar argument,
replacing Proposition \ref{oppositekey}
with Proposition \ref{okeyproperty}.
\end{proof}

As a corollary of Lemma \ref{odirectionrel} and Remark \ref{remark30}
we have the following lemma.

\begin{lem}\label{LD(h(LD(A)))}
Let $h : (R^n,0) \to (R^n,0)$ be a bi-Lipschitz homeomorphism,
and let $A \subset R^n$ such that $0\in \overline{A}$.
If $A$ and $h(A)$ are definable, 
then $LD(h(LD(A))) = LD(h(A))$ is definable.
\end{lem}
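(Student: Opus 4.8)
The plan is to read this off directly from Lemma \ref{odirectionrel} together with Remark \ref{remark30}. Since $A$ is assumed definable, Lemma \ref{odirectionrel} applies and yields the equality $LD(h(A)) = LD(h(LD(A)))$ at once; this disposes of the identity asserted in the statement, so that the only substantive content left is the definability claim.

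To establish definability I would invoke Remark \ref{remark30}(2), which guarantees that for a definable subset of $R^n$ the associated real tangent cone is again definable. Applying it to $h(A)$ — which is definable by hypothesis — gives that $LD(h(A))$ is a definable set-germ at $0 \in R^n$. Combining this with the equality from the first step, $LD(h(LD(A)))$ coincides with the definable set $LD(h(A))$ and is therefore definable, which completes the argument.

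There is no genuine obstacle here, the lemma being a formal consequence of results already in hand; the one point worth flagging is that one \emph{cannot} argue directly that $LD(h(LD(A)))$ is definable, because $h$ is merely bi-Lipschitz and hence $h(LD(A))$ itself need not be definable even though $LD(A)$ is (by Remark \ref{remark30}(2) applied to $A$). The definability can only be obtained by routing it through the equality with $LD(h(A))$, and supplying exactly that equality is the role played by Lemma \ref{odirectionrel}.
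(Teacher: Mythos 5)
Your argument is correct and coincides with the paper's: the lemma is stated there precisely as a corollary of Lemma \ref{odirectionrel} (giving $LD(h(A)) = LD(h(LD(A)))$ since $A$ is definable) and Remark \ref{remark30}(2) applied to the definable set $h(A)$ (giving definability of $LD(h(A))$). Your closing observation about why the definability must be routed through this equality rather than through $h(LD(A))$ itself is also exactly the right point.
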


We give one more example having condition $(SSP)$.
Using a similar argument
to Proposition 6.4 in \cite{koikepaunescu}, we can show the following:

\begin{prop}\label{h(LD(A))}
Let $h : (R^n,0) \to (R^n,0)$ be a bi-Lipschitz homeomorphism,
and let $A$, $h(A) \subset R^n$ be 
definable set-germs at $0 \in R^n$ such that
$0\in \overline{A}$.
Then the set $h(LD(A))$ satisfies condition $(SSP)$.
\end{prop}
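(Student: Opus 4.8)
The plan is to show that $h(LD(A))$ satisfies $(SSP)$ by reducing, via Lemma \ref{LD(h(LD(A)))}, to the fact that $LD(h(LD(A))) = LD(h(A))$ is definable, and then invoking Remark \ref{remark42}(3): every definable set satisfies $(SSP)$. The issue is that $h(LD(A))$ itself need not be definable (since $h$ is only bi-Lipschitz, not definable), so one cannot apply Remark \ref{remark42}(3) directly to it. Thus the real content is to compare the direction behaviour of $h(LD(A))$ with that of its definable real tangent cone $LD(h(LD(A)))$, and to exploit the $(SSP)$ of the latter.

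First I would unwind the definitions. Let $\{a_m\}$ be a sequence in $R^n$ tending to $0$ with $\lim_{m\to\infty} a_m/\|a_m\| = a \in D(h(LD(A)))$. We must produce $\{c_m\} \subset h(LD(A))$ with $\|a_m - c_m\| \ll \|a_m\|, \|c_m\|$. The key observation is that $D(h(LD(A)))$ is, by definition, the set of directions approached along sequences \emph{in} $h(LD(A))$; so $a$ is realised by some sequence $\{c_m'\}\subset h(LD(A))$ with $c_m'/\|c_m'\| \to a$. The subtlety is matching the scale: we need points of $h(LD(A))$ close to $a_m$ for \emph{our} sequence, not merely some sequence. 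Here is where $LD(h(A))$ being definable, hence $(SSP)$, enters: by Lemma \ref{LD(h(LD(A)))}, $LD(h(LD(A))) = LD(h(A))$, and $a \in D(h(LD(A))) = D(LD(h(LD(A))))$, so applying $(SSP)$ of the definable set $LD(h(A)) = LD(h(LD(A)))$ to $\{a_m\}$ yields $\{d_m\} \subset LD(h(LD(A)))$ with $\|a_m - d_m\| \ll \|a_m\|, \|d_m\|$. It therefore suffices to show that each point of $LD(h(LD(A)))$ can be approximated, at its own scale, by points of $h(LD(A))$ — i.e. that $h(LD(A)) \stsim LD(h(LD(A)))$, or at least that $LD(h(LD(A))) \subset ST_\theta(h(LD(A)))$ for some $\theta \in \Phi$.

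To get that last inclusion I would argue as follows. Since $A$ is definable, $LD(A)$ is definable, so by Proposition \ref{okeyproperty} (applied with $LD(A)$ in place of $A$, using $LD(LD(A)) = LD(A)$) and the Sandwich Lemma \ref{osandwichlemma}, the image $h(LD(A))$ sits inside $ST_{\theta_1}(h(LD(A)))$ trivially, but more to the point $h(LD(A))$ and $h(LD(LD(A)))$ are $ST$-equivalent in the setting of Theorem \ref{eqtheorem} once one knows $h(LD(A))$ is $ST$-equivalent to a definable set. Concretely: $LD(A) \subset ST_{\theta}(h^{-1}(h(LD(A))))$ trivially and, running the argument of Lemma \ref{odirectionrel} in reverse, one obtains $h(LD(A)) \stsim LD(h(LD(A)))$ because both have the same direction set $D(h(LD(A)))$ and $LD(h(LD(A)))$ is definable (Corollary \ref{cor4} with $B = LD(A)$ gives $h(LD(A)) \subset ST_{\theta'}(\text{something definable})$). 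Then Corollary \ref{cor1}(1) and (3) let us transport $(SSP)$: if $d_m \in LD(h(LD(A))) \subset ST_\theta(h(LD(A)))$, pick $c_m \in h(LD(A))$ with $\|d_m - c_m\| \le \theta(\|d_m\|)\|d_m\| \ll \|d_m\|$, and combine with $\|a_m - d_m\| \ll \|a_m\|, \|d_m\|$ via the triangle inequality to conclude $\|a_m - c_m\| \ll \|a_m\|, \|c_m\|$, as required.

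The main obstacle I anticipate is the book-keeping needed to establish the $ST$-equivalence $h(LD(A)) \stsim LD(h(LD(A)))$ cleanly: one direction ($h(LD(A)) \subset ST_\theta(LD(h(A)))$) is the analogue of Proposition \ref{okeyproperty} but for the \emph{non-definable} set $h(LD(A))$, so the Curve Selection / Lojasiewicz proof of \ref{okeyproperty} does not apply verbatim and must be routed through the definable set $LD(A)$ and the Sandwich Lemma — this is exactly the mechanism of Lemma \ref{odirectionrel}, and the cited Proposition 6.4 of \cite{koikepaunescu} presumably packages it. The other potential friction point is purely formal: over an Archimedean real closed field one must remember that $\dist(x, A)$ need only be a real number, not an element of $R$ (Remark \ref{remark30}(1)), so the "$\ll$" estimates should be phrased in the quantifier form of Remark \ref{remark40} to stay within $R$. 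Modulo these, the proof is a short assembly of Lemma \ref{LD(h(LD(A)))}, Remark \ref{remark42}(3), Theorem \ref{eqtheorem}, and Corollary \ref{cor1}.
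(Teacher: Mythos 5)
Your overall strategy is sound and is essentially the reduction the paper intends (the paper itself only says the result follows by the argument of Proposition 6.4 in \cite{koikepaunescu}): transfer the test sequence $\{a_m\}$ to the definable cone $LD(h(LD(A)))=LD(h(A))$, which satisfies $(SSP)$, and then pass from the cone back into $h(LD(A))$ through a sea-tangle inclusion and the triangle inequality. The scale bookkeeping in your final step is correct.

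The problem is your justification of the one step that carries all the content, namely $LD(h(LD(A)))\subset ST_{\theta}(h(LD(A)))$, i.e.\ the $ST$-equivalence $h(LD(A))\stsim LD(h(LD(A)))$. Corollary \ref{cor4} cannot be invoked with $B=LD(A)$: its hypothesis requires $h(B)=h(LD(A))$ to be definable, which is precisely what you do not know. And the principle ``equal direction sets plus definability of one side implies $ST$-equivalence'' is false for non-definable sets: a fast spiral in $R^2$ has direction set $S^{1}$, hence $LD=R^2$, yet the plane is not contained in any $ST_{\theta}$ of the spiral — the same phenomenon behind $(SSP)$ not being a bi-Lipschitz invariant. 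The correct derivation is the chain the paper itself spells out at the end of \S 6: $A\stsim LD(A)$ by Theorem \ref{eqtheorem} (definability of $A$), hence $h(A)\stsim h(LD(A))$ by Proposition \ref{ST-equivalence}; also $h(A)\stsim LD(h(A))=LD(h(LD(A)))$ by Theorem \ref{eqtheorem} (definability of $h(A)$) together with Lemma \ref{odirectionrel}; transitivity then yields $h(LD(A))\stsim LD(h(LD(A)))$, and in particular the inclusion you need. With that substitution your proof closes. (Alternatively, closer to Proposition 6.4 of \cite{koikepaunescu}, one can argue directly: since $D(h(LD(A)))=D(h(A))$, apply $(SSP)$ of the definable set $h(A)$ to get $h(z_m)\in h(A)$ close to $a_m$ at the right scale, then use $A\subset ST_{\theta}(LD(A))$ from Proposition \ref{okeyproperty} and the bi-Lipschitz bounds on $h$ to replace $h(z_m)$ by a nearby point of $h(LD(A))$; this route does not need Proposition \ref{oppositekey} at all.)
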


Let us discuss more on the sequence selection property
over the field of real numbers $\R$.
In this note we consider also the notion of weak
sequence selection property, denoted by $(WSSP)$ for short.

\begin{defn}\label{WSSP}
Let $A \subset \R^n$ be a set-germ at $0 \in \R^n$
such that $0 \in \overline{A}$.
We say that $A$ satisfies {\em condition} $(WSSP)$,
if for any sequence of points $\{ a_m \}$ of $\R^n$
tending to $0 \in \R^n$ such that 
$\lim_{m \to \infty} {a_m \over \| a_m \| } \in D(A)$,
there exist a subsequence $\{ m_j \}$  and
$\{ b_{m_j} \} \subset A$ such that
$$
\| a_{m_j} - b_{m_j} \| \ll \| a_{m_j} \|, \ \| b_{m_j} \| . 
$$ 
\end{defn}

We have the following characterisation of condition $(SSP)$.

\begin{lem}\label{SSPWSSP}
Let $A \subset \R^n$ be a set-germ at $0 \in \R^n$
such that $0 \in \overline{A}$.
If $A$ satisfies condition $(WSSP)$, then it satisfies condition $(SSP)$.
Namely, conditions $(SSP)$ and $(WSSP)$ are equivalent.
\end{lem}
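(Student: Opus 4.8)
The plan is to prove the contrapositive-free direction directly: assume $A$ satisfies $(WSSP)$ and show it satisfies $(SSP)$. The reverse implication is trivial from the definitions (any sequence is a subsequence of itself), so establishing this one direction gives the claimed equivalence. So let $\{a_m\}$ be an arbitrary sequence in $\R^n$ tending to $0$ with $\lim_{m\to\infty} a_m/\|a_m\| = a \in D(A)$, and suppose for contradiction that the $(SSP)$ conclusion fails. The negation of $(SSP)$ for this sequence means there is $\varepsilon_0 > 0$ and an infinite set of indices $m$ for which $\dist(a_m, A) > \varepsilon_0 \|a_m\|$ — i.e., no $b_m \in A$ can be chosen with $\|a_m - b_m\| \ll \|a_m\|$ along some subsequence.

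The key step is then to observe that we may pass to that offending subsequence: restricting $\{a_m\}$ to the indices $m$ with $\dist(a_m, A) > \varepsilon_0 \|a_m\|$, we obtain a new sequence $\{a_{m_k}\}$ still tending to $0$, still with $\lim a_{m_k}/\|a_{m_k}\| = a \in D(A)$ (convergence is inherited by subsequences), but for which \emph{every} further subsequence satisfies $\dist(a_{m_k}, A) > \varepsilon_0 \|a_{m_k}\|$. Now apply $(WSSP)$ to this sequence $\{a_{m_k}\}$: it produces a sub-subsequence $\{a_{m_{k_j}}\}$ and points $b_{m_{k_j}} \in A$ with $\|a_{m_{k_j}} - b_{m_{k_j}}\| \ll \|a_{m_{k_j}}\|$. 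In particular $\dist(a_{m_{k_j}}, A) \le \|a_{m_{k_j}} - b_{m_{k_j}}\| \ll \|a_{m_{k_j}}\|$, so for $j$ large we get $\dist(a_{m_{k_j}}, A) \le \varepsilon_0 \|a_{m_{k_j}}\|$, contradicting the choice of the subsequence $\{a_{m_k}\}$. Hence the $(SSP)$ conclusion must hold for the original sequence, and since $\{a_m\}$ was arbitrary, $A$ satisfies $(SSP)$.

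The one point requiring care — and the place where I expect the only real friction — is making the notation $\ll$ precise and handling the quantifier order correctly, since $(WSSP)$ only guarantees a good \emph{sub}sequence while $(SSP)$ demands a good choice for \emph{all} indices. The trick above is the standard "subsequence of a subsequence" device: to prove a property holds for the whole sequence it suffices to prove that every subsequence has a further subsequence on which it holds, but here we use the dual formulation — if $(SSP)$ failed for $\{a_m\}$, it fails so badly (uniformly in $\varepsilon_0$) on some subsequence that $(WSSP)$ applied to that subsequence yields an immediate contradiction. I would spell out that $\|a_m - b_m\| \ll \|a_m\|, \|b_m\|$ means $\|a_m - b_m\|/\|a_m\| \to 0$ and $\|a_m - b_m\|/\|b_m\| \to 0$ (equivalently $\dist(a_m,A)/\|a_m\| \to 0$, the $\|b_m\|$ part being automatic once the $\|a_m\|$ part holds since then $\|b_m\|/\|a_m\| \to 1$), so that the negation is exactly the existence of $\varepsilon_0$ and infinitely many bad indices, which is what drives the argument. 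No compactness or other special feature of $\R$ beyond the ability to extract convergent-ratio subsequences (already built into the hypothesis here) is needed.
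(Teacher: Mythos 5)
Your proof is correct and is essentially the paper's argument: both pass to a subsequence along which $\dist(a_m,A)/\|a_m\|$ stays bounded away from zero (the paper additionally extracts a limit $\alpha>0$, which you avoid) and then note that $(WSSP)$ applied to that subsequence forces the ratio to tend to $0$ along a further subsequence, a contradiction. Your explicit remarks on the meaning of $\ll$ and on the $\|b_m\|$ part being automatic are fine and consistent with what the paper leaves implicit.
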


\begin{proof} We show that $(WSSP)$ implies $(SSP)$.
Assume that $A$ does not satisfy condition $(SSP)$.
Then there is a sequence of points $\{ a_m \}$
tending to $0 \in \R^n$ with
$\lim_{m \to \infty} {a_m \over \| a_m \|} \in D(A)$
such that for any sequence of points $\{ b_m \} \subset A$,
the following is not satisfied:
$$
\| a_m - b_m \| \ll \| a_m \| . 
$$ 
Therefore there is a subsequence of points $\{ a_{m_j} \}$ of
$\{ a_m \}$ such that
$\lim_{m_j \to \infty} {d(a_{m_j},A) \over \| a_{m_j} \|} = \alpha > 0$,
where $d(a_{m_j},A)$ denotes the distance between $a_{m_j}$ and $A$.
Taking this $\{ a_{m_j} \}$ as the first $\{ a_m \}$,
we can assume from the beginning that
$\lim_{m \to \infty} {d(a_m,A) \over \| a_m \|} = \alpha > 0$.
This implies that there does not exist a sequence of points
$\{ b_{m_j} \} \subset A$ such that
$\| a_{m_j} - b_{m_j} \| \ll \| a_{m_j} \|$.
Therefore $A$ does not satisfy condition $(WSSP)$.
\end{proof}

Using Lemmas \ref{directionrel} and \ref{SSPWSSP} we can improve 
Proposition 6.4 in \cite{koikepaunescu} as follows:

\begin{thm}\label{2SSP}
Let $h : (\R^n,0) \to (\R^n,0)$ be a bi-Lipschitz homeomorphism,
and let $A \subset \R^n$ such that $0\in \overline{A}$.
Assume that $A$ satisfies condition $(SSP)$.
Then $h(A)$ satisfies condition $(SSP)$, if and only if,
$h(LD(A))$ satisfies condition $(SSP)$.
\end{thm}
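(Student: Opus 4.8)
The plan is to treat $A$ and $LD(A)$ on an equal footing. By hypothesis $A$ satisfies $(SSP)$; by Remark \ref{remark42} (1) so does $LD(A)$; and $D(A)=D(LD(A))$. Moreover, since $A$ satisfies $(SSP)$, Lemma \ref{directionrel} gives the common direction set $D(h(A))=D(h(LD(A)))$. I would therefore reduce the theorem to the following symmetric assertion, applied with $E=A$ and $F=LD(A)$: \emph{if $E,F\subset\R^n$ are set-germs at $0$ with $0\in\overline E\cap\overline F$, both satisfying $(SSP)$, with $D(E)=D(F)$ and $D(h(E))=D(h(F))$, then $h(E)$ satisfies $(SSP)$ if and only if $h(F)$ does.} Since this assertion is symmetric in $E$ and $F$, it suffices to prove one of the two implications.

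So assume $h(F)$ satisfies $(SSP)$; by Lemma \ref{SSPWSSP} it is enough to show $h(E)$ satisfies $(WSSP)$. Given a sequence $\{y_m\}\to 0$ with $\lim_{m} y_m/\|y_m\|\in D(h(E))=D(h(F))$, I would use $(SSP)$ of $h(F)$ to obtain $\{z_m\}\subset h(F)$ with $\|y_m-z_m\|\ll\|y_m\|,\|z_m\|$, set $w_m:=h^{-1}(z_m)\in F$ (which tends to $0$, as $h^{-1}$ is Lipschitz and fixes $0$), and pass to a subsequence $\{m_j\}$ along which $w_{m_j}/\|w_{m_j}\|$ converges, say to $a$. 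The key point is that $a\in D(F)$ \emph{automatically}, straight from the definition of the direction set, since the $w_{m_j}$ lie in $F$ and tend to the origin; hence $a\in D(E)$. Then $(SSP)$ of $E$ supplies $\{e_{m_j}\}\subset E$ with $\|w_{m_j}-e_{m_j}\|\ll\|w_{m_j}\|,\|e_{m_j}\|$, and applying the bi-Lipschitz map $h$ and chasing the $\ll$-inequalities through the triangle inequality yields $\|y_{m_j}-h(e_{m_j})\|\ll\|y_{m_j}\|,\|h(e_{m_j})\|$ with $h(e_{m_j})\in h(E)$, which is exactly $(WSSP)$ for $h(E)$.

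The one slightly delicate issue — and the reason Lemma \ref{SSPWSSP} is invoked — is that $(SSP)$ as stated requires the normalised sequence to actually converge to a point of the direction set, whereas pulling $\{z_m\}$ back by $h^{-1}$ only gives a sequence lying in $F$ with no a priori control on the directions $w_m/\|w_m\|$. Passing to a convergent subsequence repairs this and is harmless precisely because $(WSSP)$ and $(SSP)$ coincide over $\R$; and the subsequential limit then lands in $D(F)=D(E)$ for free. What remains beyond this is routine: checking that the sequences involved are eventually non-zero so the normalisations make sense, and the $\ll$-bookkeeping, which forces $\|z_m\|$, $\|w_m\|$ and $\|h(e_{m_j})\|$ to be comparable to $\|y_m\|$.
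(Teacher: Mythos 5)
Your proposal is correct and takes essentially the same route as the paper's proof: approximate the given sequence by points of $h(F)$ via its $(SSP)$, pull back by $h^{-1}$, pass to a subsequence whose directions converge to a point of $D(F)=D(E)$ (with $D(h(A))=D(h(LD(A)))$ coming from Lemma \ref{directionrel}), apply $(SSP)$ of $E$, push forward by $h$ and do the triangle-inequality bookkeeping to get $(WSSP)$, hence $(SSP)$ by Lemma \ref{SSPWSSP}. The only difference is presentational: you package both implications as one symmetric assertion with $\{E,F\}=\{A,\,LD(A)\}$, whereas the paper proves the ``only if'' direction and notes the ``if'' direction is analogous.
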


\begin{proof}
We first show the ``only if" part.
By assumption, $A$ satisfies condition $(SSP)$.
Therefore it follows from Lemma \ref{directionrel} that
$D(h(LD(A))) = D(h(A))$.
Let $\{ y_m \}$ be an arbitrary sequence of points of $\R^n$
tending to $0 \in \R^n$ such that
$$
\lim_{m \to \infty} {y_m \over \| y_m \|} \in D(h(LD(A))) = D(h(A)).
$$
Let $y_m = h(x_m)$ for each $m$.
Since $h(A)$ satisfies condition $(SSP)$, there is a sequence of points
$\{ z_m \} \subset A$ such that
$$
\| h(x_m) - h(z_m) \| \ll \| h(x_m) \| , \ \| h(z_m) \| .
$$

On the other hand, there is a subsequence $\{ z_{m_j} \}$
of $\{ z_m \}$ such that 
$\lim_{{m_j} \to \infty} {z_{m_j} \over \| z_{m_j} \|} \in D(A)$.
Since $LD(A)$ satisfies condition $(SSP)$, there is a sequence of points
$\{ \theta_{m_j} \} \subset LD(A)$ such that
$$
\| z_{m_j} - \theta_{m_j} \| \ll \| z_{m_j} \| , \ \| \theta_{m_j} \| .
$$
It follows from the bi-Lipschitz of $h$ that
$$
\| h(z_{m_j}) - h(\theta_{m_j}) \| \ll \| h(z_{m_j}) \| , \ 
\| h(\theta_{m_j}) \| .
$$
Then we have
$$
\| h(x_{m_j}) - h(\theta_{m_j}) \| \le
\| h(x_{m_j}) - h(z_{m_j}) \| + \| h(z_{m_j}) - h(\theta_{m_j}) \| 
\ll \| h(z_{m_j}) \| .
$$
Therefore we have
$$
\| h(x_{m_j}) - h(\theta_{m_j}) \| \ll \| h(x_{m_j}) \| , \ 
\| h(\theta_{m_j}) \| .
$$
Thus $h(LD(A))$ satisfies condition $(WSSP)$, and also condition
$(SSP)$ by Lemma \ref{SSPWSSP}.

The ``if" part can be proved in a similar way.
\end{proof}


\bigskip
\section{Proof of Main Theorem}
\label{MTproof}
\medskip

Our main theorem is proved in the same way as
Theorem \ref{maintheorem}.
Since the reduction arguments in \S 6 of \cite{koikepaunescu}
work also for definable sets, it only suffices  to show
the following proposition.

\begin{prop}\label{reductions}
Let $h : (R^n,0) \to (R^n,0)$ be a bi-Lipschitz homeomorphism,
and let $A \subset R^n$ such that $0\in \overline{A}$.
Suppose that $A$, $h(A)$ are definable. Then
$$
\dim h(LD(A)) \ge \dim LD(h(LD(A))).
$$ 
\end{prop}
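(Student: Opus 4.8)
\emph{Plan.} The statement is a ``volume'' (covering-number) comparison, and the plan is to reduce it to such a comparison for a single definable cone, feeding in the necessary approximation information through the $ST$-equivalence machinery of \S4.

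\emph{Step 1: assemble an $ST$-equivalence.} Since $A$ is definable, Theorem \ref{eqtheorem} gives $A \stsim LD(A)$, and since $h$ is bi-Lipschitz, Proposition \ref{ST-equivalence} yields $h(A) \stsim h(LD(A))$. As $h(A)$ is definable, Theorem \ref{eqtheorem} also gives $h(A) \stsim LD(h(A))$, while Lemma \ref{odirectionrel} identifies $LD(h(A)) = LD(h(LD(A)))$, a definable cone. By transitivity (Remark \ref{remark31}),
$$ h(LD(A)) \ \stsim\ LD(h(LD(A))) ; $$
in particular there is $\theta \in \Phi$ with $LD(h(LD(A))) \subset ST_{\theta}(h(LD(A)))$ as germs at $0 \in R^n$.

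\emph{Step 2: reduce the inequality.} As $h$ is a homeomorphism which is Lipschitz in both directions, $\dim h(LD(A)) \ge \dim LD(A)$, so it suffices to prove $\dim LD(h(LD(A))) \le \dim LD(A)$. Put $V := LD(A)$, a definable cone of dimension $d$, and $W := LD(h(LD(A)))$, a definable cone of dimension $e$; the cases $d = n$ and $e = 0$ are trivial, so assume $d < n$, $e \ge 1$, and, for contradiction, $e \ge d+1$.

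\emph{Step 3: the covering estimate (the heart of the proof).} Fix small $r > 0$ and set $W_r := W \cap (\overline B_{2r} \setminus B_r)$. Since $W$ is a definable cone of dimension $e$ it has positive Hausdorff $e$-content, and the cone scaling gives a constant $c_1 > 0$, independent of $r$, such that any covering of $W_r$ by balls of radius $\sigma$ uses at least $c_1 (r/\sigma)^e$ of them. On the other hand, by Step 1 every $w \in W_r$ satisfies $\dist(w, h(V)) \le \theta(\|w\|)\|w\| \le 2r\,\theta(2r)$, so, writing $\varepsilon(r) := 2\theta(2r)$ (which tends to $0$ as $r \to 0$), $W_r$ lies in the $\varepsilon(r)r$-neighbourhood of $h(V) \cap \overline B_{3r}$. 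Now $h(V) \cap \overline B_{3r} \subset h(V \cap \overline B_{3r/K_1})$, and $V \cap \overline B_{3r/K_1}$, a piece of a $d$-dimensional definable cone, is covered by at most $C(r/\tau)^d$ balls of radius $\tau$ (standard for bounded definable sets of dimension $d$, rescaled via the cone structure), hence its image under $h$ by the same number of balls of radius $K_2\tau$. Choosing $\tau$ with $K_2\tau = \varepsilon(r)r$ produces a covering of $W_r$ by at most $C'\varepsilon(r)^{-d}$ balls of radius $2\varepsilon(r)r$. Comparing with the lower bound gives $c_1(2\varepsilon(r))^{-e} \le C'\varepsilon(r)^{-d}$, i.e. $c_1 2^{-e} \le C'\varepsilon(r)^{e-d}$; letting $r \to 0$ and using $e-d \ge 1$ forces $c_1 \le 0$, a contradiction. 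Hence $e \le d$, which proves the proposition.

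\emph{The main obstacle.} The soft part (Steps 1--2) is bookkeeping with results already in hand; the real content is the metric fact in Step 3, namely that a definable cone of dimension $e$ cannot be $o(\|x\|)$-approximated at every scale by the bi-Lipschitz image of a definable cone of dimension $d < e$. The mechanism is a mismatch of covering numbers near the annulus of radius $\asymp r$: the image of the $d$-dimensional cone needs only $\sim \varepsilon^{-d}$ balls of radius $\varepsilon r$, whereas the $e$-dimensional cone genuinely requires $\sim \varepsilon^{-e}$. One subtlety to keep track of is which notion of dimension (and Hausdorff measure/content) is used for the possibly non-definable set $h(LD(A))$: it must be the bi-Lipschitz invariant one that agrees with the o-minimal dimension on definable sets, which is precisely what legitimises the first inequality in Step 2.
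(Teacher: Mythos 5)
Your Steps 1--2 coincide with the paper's own reduction: the proof given at the end of \S 6 assembles exactly the same chain $h(LD(A)) \stsim h(A) \stsim LD(h(A)) = LD(h(LD(A)))$ (Theorem \ref{eqtheorem}, Proposition \ref{ST-equivalence}, Lemmas \ref{odirectionrel} and \ref{LD(h(LD(A)))}, transitivity of $\stsim$) and reduces the claim to $\dim LD(h(LD(A))) \le \dim LD(A)$, using $\dim h(LD(A)) = \dim LD(A)$. Where you genuinely diverge is the quantitative core. The paper feeds the $ST$-equivalence into Lemma \ref{maintool}: it extends $h$ to $\overline{h}$ on $\R^n$ by density, compares the $n$-dimensional volumes $Vol(\overline{ST_\theta(E)}^{\R}\cap B_\epsilon(0))$ and $Vol(\overline{ST_\theta(LD(F))}^{\R}\cap B_\epsilon(0))$ via Lemmas \ref{osandwichlemma} and \ref{ctimes} and Propositions \ref{STvolume} and \ref{conevolume}, and gets the contradiction from Corollary \ref{volumeratio}. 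You instead run a covering-number count on the annulus of radius $r$: the $e$-dimensional cone needs $\gtrsim \varepsilon^{-e}$ balls of radius $\varepsilon r$, while the $\varepsilon r$-neighbourhood of $h(V\cap \overline{B}_{3r/K_1})$ needs only $\lesssim \varepsilon^{-d}$, since bi-Lipschitz maps send $\tau$-balls into $K_2\tau$-balls; letting $r\to 0$ kills $\varepsilon^{\,e-d}$. Morally this is the same volume mismatch (covering number times ball volume is comparable to the volume of the sea-tangle neighbourhood), but your route is more direct: it needs only the single inclusion $LD(h(LD(A)))\subset ST_\theta(h(LD(A)))$ rather than the full comparability chain, and it avoids the extension $\overline{h}$ and Propositions \ref{STvolume}, \ref{conevolume} altogether. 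What the paper's formulation buys in exchange is reusability: the abstract volume comparisons are exactly what get transplanted to the non-Archimedean setting of \S 7, where your real-valued contents and the limit $\varepsilon(r)\to 0$ in $\R$ have no direct analogue.

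Two points to tighten. First, the upper bound ``a bounded definable set of dimension $d$ is covered by $\le C\tau^{-d}$ balls of radius $\tau$'' is not free: it requires a decomposition into finitely many Lipschitz graphs over $d$-dimensional subspaces, which is precisely the ingredient the paper invokes inside the proof of Proposition \ref{conevolume} (and of Lemma \ref{gctimes}); cite or prove it. Second, when $R\varsubsetneq\R$ (e.g.\ the real algebraic numbers) the definable cone $W\subset R^n$ is countable, so ``$W$ has positive Hausdorff $e$-content'' is false as literally written; the statement your argument actually uses is that $\overline{W}^{\R}$ has positive $e$-content and that a finite covering of $W_r$ by closed balls also covers $\overline{W_r}^{\R}$ --- the same closure device $\overline{(\,\cdot\,)}^{\R}$ the paper introduces in \S 6. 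The same reading (Hausdorff or topological dimension of closures in $\R^n$, bi-Lipschitz invariant and agreeing with the o-minimal dimension on definable sets) is what makes the inequality $\dim h(LD(A))\ge \dim LD(A)$ in your Step 2 legitimate, as you yourself flag; with these repairs your argument is correct.
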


As mentioned in the introduction, Theorem \ref{maintheorem}
was proved using sea-tangle properties,
sequence selection properties and volume arguments.
In this section we discuss volume arguments and give
a proof of our main theorem.
In order to avoid considering the volume on a general
Archimedean real closed field $R$, we take the closure
of a subset of $R^n$ in $\R^n$.
Note that $R^n$ is dense in $\R^n$.

For a subset $A$ of $R^n$ ($\subset \R^n$),
let $\overline{A}^{\R}$ denote the closure of $A$ in $\R^n$
(not in $R^n$), and let $B_{\epsilon}(0)$ denote a closed
$\epsilon$ ball in $\R^n$ centred at $0 \in \R^n$
for $\epsilon > 0$, $\epsilon \in \R$.

Let $f, \ g : [0,\delta ) \to \R$, $\delta > 0$, be non-negative
functions, where $[0,\delta )$ is a half open interval of $\R$. 
If there are real numbers $K > 0$, $0 < \delta_1 \le \delta$ 
such that
$$
f(\epsilon ) \le K g(\epsilon ) \ \ \text{for} \ \ 0 \le \epsilon \le \delta_1,
$$
then we write $f \precsim g$ (or $g \succsim f$).
If $f \precsim g$ and $f \succsim g$, we write $f \thickapprox g$.

We can easily see the following property on volumes.

\begin{lem}\label{ctimes} Let $A \subset R^n$ such that
$0 \in \overline{A}$.
Then 
$$
Vol(\overline{ST_{c\theta}(A)}^{\R} \cap B_{\varepsilon}(0)) 
\thickapprox Vol(\overline{ST_{\theta}(A)}^{\R} \cap
B_{\varepsilon}(0))
$$ 
for $\theta \in \Phi$ and $c > 0$. 
\end{lem}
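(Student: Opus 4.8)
\smallskip
\noindent\emph{Proof strategy.}
The plan is first to reduce to an inequality in $\R^n$. Since $c\theta\in\Phi$, one may take $c\geq 1$: then $ST_\theta(A)\subset ST_{c\theta}(A)$, so only $Vol(\overline{ST_{c\theta}(A)}^{\R}\cap B_{\varepsilon}(0))\precsim Vol(\overline{ST_\theta(A)}^{\R}\cap B_{\varepsilon}(0))$ has to be proved, the case $0<c<1$ following by applying this with $c\theta,\,1/c$ in place of $\theta,\,c$. To remove the closure in $\R^n$, write $\bar\psi$ for the continuous (monotone) extension of $\psi\in\Phi$ to a neighbourhood of $0$ in $\R$ and set $\widehat{ST}_{\psi}:=\{x\in\R^n : \dist(x,\overline{A}^{\R})\le\bar\psi(\|x\|)\|x\|\}$. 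Density of $R^n$ in $\R^n$ together with the openness of strict inequality give $\overline{ST_{\psi}(A)}^{\R}\subset\widehat{ST}_{\psi}\subset\overline{ST_{2\psi}(A)}^{\R}$, so it suffices to establish $Vol(\widehat{ST}_{c\theta}\cap B_{\varepsilon}(0))\precsim Vol(\widehat{ST}_{\theta}\cap B_{\varepsilon}(0))$ for every $c\ge1$. From here I work in $\R^n$, with $A=\overline{A}^{\R}$ closed, $\theta$ continuous increasing, $\theta(0)=0$, and I write $\Theta(r):=\theta(r)\,r$.

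The core is a covering estimate with position-dependent radii. Fix $\epsilon_0\in(0,1]$ and, for $x\ne 0$, put $\rho(x):=\epsilon_0\,\Theta\big(\|x\|(1-(c+\epsilon_0)\theta(\|x\|))\big)$. On $\widehat{ST}_{c\theta}\cap B_{\varepsilon}(0)$ these radii are $\le\epsilon_0\Theta(\varepsilon)$, so Vitali's covering lemma yields a countable pairwise disjoint family $\{B(x_i,\rho(x_i))\}$ with $\widehat{ST}_{c\theta}\cap B_{\varepsilon}(0)\subset\bigcup_i B(x_i,5\rho(x_i))$; hence $Vol(\widehat{ST}_{c\theta}\cap B_{\varepsilon}(0))\le\kappa_n\sum_i\rho(x_i)^n$ for a dimensional constant $\kappa_n$. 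For each $i$ pick $a_i\in A$ with $\|x_i-a_i\|=\dist(x_i,A)\le c\Theta(\|x_i\|)$. Then $B(a_i,\rho(x_i))\subset\widehat{ST}_{\theta}$, because for $y$ in that ball $\dist(y,A)\le\rho(x_i)$, while $\|y\|\ge\|x_i\|-(c+\epsilon_0)\Theta(\|x_i\|)=\|x_i\|\big(1-(c+\epsilon_0)\theta(\|x_i\|)\big)$ forces $\Theta(\|y\|)\ge\rho(x_i)/\epsilon_0\ge\dist(y,A)$. So I obtain a family of balls lying inside $\widehat{ST}_{\theta}$ whose total volume is a fixed positive multiple of $\sum_i\rho(x_i)^n$; if these balls meet each point of $\R^n$ boundedly often, then $\sum_i\rho(x_i)^n$ is dominated by $Vol(\widehat{ST}_{\theta}\cap B_{\varepsilon'}(0))$ with $\varepsilon'$ slightly larger than $\varepsilon$, and combining with the display above proves the inequality.

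Two points carry the content. First, the covering radius $\rho(x_i)$ must be comparable to $\Theta(\|x_i\|)$ — this is what lets the disjointness of the $B(x_i,\cdot)$ control the overlap of the $B(a_i,\cdot)$, and what keeps $\varepsilon'$ close to $\varepsilon$. This amounts to $\Theta\big(r(1-C_0\theta(r))\big)\ge\tfrac14\,\Theta(r)$ for fixed $C_0$ and small $r$, which is exactly where o-minimality enters: by the $C^1$ cell decomposition $\theta$ is $C^1$ near $0$, and by Monotonicity $(\log\theta)'(r)=\theta'(r)/\theta(r)$ and $r\theta'(r)$ are eventually monotone, hence converge; since $\theta(r)\to 0$ one is forced to have $(\log\theta)'(r)\to+\infty$ and $r\theta'(r)\to 0$ as $r\to0^+$ (otherwise integration keeps $\log\theta$ bounded below, contradicting $\theta(r)\to0$). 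Thus $\theta$ decays at least exponentially at scales below $r$, and together with a short bootstrap on the inequality itself this yields the displayed bound. The second, and the main obstacle, is the outer boundary: $\varepsilon$ must be held fixed, since $\varepsilon\mapsto Vol(\widehat{ST}_{\theta}\cap B_{\varepsilon}(0))$ is in general not doubling, so the balls $B(a_i,\rho(x_i))$ cannot be allowed to protrude past $B_{\varepsilon}(0)$. I would remedy this by running the covering on $\widehat{ST}_{c\theta}\cap B_{\varepsilon/2}(0)$, so that the associated balls stay inside $B_{\varepsilon}(0)$, and disposing of the thin shell $\{\varepsilon/2\le\|x\|\le\varepsilon\}$ by the same covering argument localised there — using once more that over a shell of that thickness $\Theta$ changes by only a bounded factor. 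Setting up this bookkeeping, together with the overlap count for the balls $B(a_i,\cdot)$, is the step I expect to be hardest, and it is there that the tameness of definable $\theta$ furnished by o-minimality is indispensable.
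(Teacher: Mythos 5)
The sound parts of your plan are the reduction to $\R^n$ and the distortion estimate $\Theta\bigl(r(1-C_0\theta(r))\bigr)\ge\tfrac14\Theta(r)$, where $\Theta(r)=r\theta(r)$: this is indeed where definability of $\theta$ enters, and it is cleanest via the Monotonicity Theorem, which forces $r\theta'(r)\to 0$ (a positive limit would make $\theta$ grow logarithmically), followed by the mean value theorem; your ``decays at least exponentially'' gloss is neither needed nor correct in general (e.g.\ $\theta(t)=t$). Note also that the paper gives no argument for this lemma at all (it is asserted as ``easily seen''), so there is nothing to compare your proof with; the closest proved statements are Lemma 7.2 and Lemma 7.2$'$, which establish the analogous comparison only for definable cones, respectively for definable set-germs and $\theta\ge\theta_1$, by passing to $LD(A)$. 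Of the two steps you leave open, the overlap count is not a real obstacle: instead of running Vitali on the balls $B(x_i,\rho(x_i))$ and then moving centres, run it directly on the family $B\bigl(a(x),\lambda\rho(x)\bigr)$ with $a(x)\in A$ a near-nearest point and $\lambda\sim c/\epsilon_0$; the selected balls are pairwise disjoint, hence so are the concentric balls $B(a(x_j),\rho(x_j))$, which lie in $ST_\theta(A)$, while their $5\lambda$-dilates cover $ST_{c\theta}(A)\cap B_\varepsilon(0)$, so no multiplicity bound is needed.

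The genuine gap is exactly the boundary issue you single out, and the repair you sketch cannot close it. First, its supporting claim is false in non-polynomially bounded structures: for $\theta(t)=e^{-1/t}$ in $\R_{exp}$ one has $\Theta(\varepsilon)/\Theta(\varepsilon/2)=2e^{1/\varepsilon}\to\infty$, so $\Theta$ is not comparable across the shell $\varepsilon/2\le\|x\|\le\varepsilon$. Second, and decisively, no bookkeeping can work at the stated level of generality, because the statement fails for arbitrary $A$ --- and your argument, like the lemma's hypothesis, uses nothing about $A$ beyond $0\in\overline A$. Take $R=\R$ with the semialgebraic structure, $n=1$, $\theta(t)=t$, $c=2$, $A=\{0\}\cup\{b_j\}$ with $b_j=2^{-j^2}$, and let $\varepsilon_j$ solve $\varepsilon_j+\tfrac32\varepsilon_j^2=b_j$. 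For $0<x\le\varepsilon_j$ one has $b_j-x\ge\tfrac32\varepsilon_j^2>x^2$, and one checks that only the horns around $b_i$, $i>j$, meet $B_{\varepsilon_j}(0)$, so $ST_\theta(A)\cap B_{\varepsilon_j}(0)$ has measure at most $16\,b_{j+1}^2$; but $ST_{2\theta}(A)\cap B_{\varepsilon_j}(0)$ contains the interval $\{x\le\varepsilon_j:\ b_j-x\le2x^2\}$, of length at least $0.4\,\varepsilon_j^2\ge0.1\,b_j^2$ (closures in $\R$ change nothing here). The ratio is at least $\tfrac{1}{160}2^{4j+2}\to\infty$, so no constant $K$ works. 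The mechanism is precisely a point of $A$ sitting just outside $B_{\varepsilon_j}(0)$ whose widened horn pokes into the ball while the narrow one does not. Consequently any correct proof must exploit more than $0\in\overline A$ --- definability of $A$, or $ST$-equivalence of $A$ to the definable cone $LD(A)$ (which covers every use the paper makes of the lemma), typically with a restriction $\theta\ge\theta_1$ as in Lemma 7.2$'$ --- and your strategy, which never uses such structure, has an essential gap at exactly the point you identified as hardest.
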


Using a similar argument as in 
Lemma 7.1 of \cite{koikepaunescu}, we can show the following lemma.

\begin{lem}\label{volumelemma} Let $\alpha$, $\beta$ 
be linear subspaces of $R^n$.
Suppose that $\dim \alpha < \dim \beta$.
Then, for $\theta \in \Phi$,
$$
\lim_{\epsilon \to 0} { Vol(\overline{ST_{\theta}(\alpha )}^{\R}
\cap B_{\epsilon}(0)) \over Vol(\overline{ST_{\theta}(\beta )}^{\R}
\cap B_{\epsilon}(0)) } = 0.
$$
\end{lem}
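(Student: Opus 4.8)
The plan is to reduce the computation of $Vol(\overline{ST_{\theta}(\alpha)}^{\R}\cap B_\epsilon(0))$ to an explicit integral over the $\epsilon$-ball, exploiting the fact that $ST_{\theta}(\alpha)$ is, by definition, the set of points within distance $\theta(\|x\|)\|x\|$ of the linear subspace $\alpha$. First I would set $p=\dim\alpha$, $q=\dim\beta$, and choose orthonormal coordinates so that $\alpha=\R^{p}\times\{0\}$ and $\beta=\R^{q}\times\{0\}$, with $p<q\le n$. For a point $x\in\R^n$, $\dist(x,\alpha)$ equals the norm of the last $n-p$ coordinates, so $\overline{ST_\theta(\alpha)}^{\R}\cap B_\epsilon(0)$ is the set of $x$ with $\|x\|\le\epsilon$ and $\dist(x,\alpha)\le\theta(\|x\|)\|x\|$. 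I would estimate the volume of this ``sea-tangle horn'' in polar-type coordinates: writing $r=\|x\|$, the slice at radius $r$ of the horn around $\alpha$ has $(n-1)$-dimensional measure comparable to $r^{\,n-1}\big(\theta(r)\big)^{n-p}$ (the angular directions transverse to $\alpha$, of which there are $n-p$, are constrained to a cone of half-angle $\approx\theta(r)$, while the $p-1$ angular directions along $\alpha$ are unconstrained), so that
$$
Vol(\overline{ST_\theta(\alpha)}^{\R}\cap B_\epsilon(0))\thickapprox\int_0^\epsilon r^{\,n-1}\big(\theta(r)\big)^{n-p}\,dr,
$$
and similarly with $\beta$ and exponent $n-q$.

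The key step is then to compare the two integrals
$$
I_\alpha(\epsilon)=\int_0^\epsilon r^{\,n-1}\big(\theta(r)\big)^{n-p}\,dr,\qquad I_\beta(\epsilon)=\int_0^\epsilon r^{\,n-1}\big(\theta(r)\big)^{n-q}\,dr
$$
and show $I_\alpha(\epsilon)/I_\beta(\epsilon)\to 0$ as $\epsilon\to 0$. Since $\theta\in\Phi$ is continuous, strictly increasing and vanishes at $0$, for small $r$ we have $0<\theta(r)<1$, hence $\big(\theta(r)\big)^{n-p}\le\big(\theta(r)\big)^{n-q}\cdot\big(\theta(r)\big)^{q-p}$ with $q-p\ge 1$; thus the integrand of $I_\alpha$ is bounded by $\theta(r)$ times that of $I_\beta$. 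Given $\eta>0$, choose $\delta_1$ so small that $\theta(r)<\eta$ for $0<r<\delta_1$; then $I_\alpha(\epsilon)\le\eta\, I_\beta(\epsilon)$ for $\epsilon<\delta_1$, which gives the claimed limit $0$. (One should note here that $I_\beta(\epsilon)>0$ for every $\epsilon>0$, so the quotient makes sense.)

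I expect the main obstacle to be making the slice-volume comparability precise, i.e. justifying rigorously the two-sided estimate for $Vol(\overline{ST_\theta(\alpha)}^{\R}\cap B_\epsilon(0))$ in terms of $\int_0^\epsilon r^{n-1}\theta(r)^{n-p}\,dr$; the geometry is intuitively clear but requires a careful choice of inner and outer approximating regions (a genuine product of a $p$-disk with an $(n-p)$-cone inside, a slightly larger one outside) together with the monotonicity of $\theta$ to control how $\theta(\|x\|)$ varies over such a region. Indeed, as the statement says, this is precisely the argument of Lemma 7.1 of \cite{koikepaunescu}, so I would carry out the estimate exactly as there, the only change being that the constant-width horn $ST_d(\cdot;C)$ is replaced by the variable-width horn $ST_\theta(\cdot)$; Lemma \ref{ctimes} above shows that rescaling $\theta$ by a constant does not affect the volume up to $\thickapprox$, which is all that is needed to absorb the implied constants. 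Once the comparability is in hand, the monotonicity argument of the previous paragraph finishes the proof.
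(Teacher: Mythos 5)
Your argument is correct and is essentially the proof the paper intends: the paper gives no independent argument here but defers to Lemma 7.1 of \cite{koikepaunescu}, which is exactly this radial-slice computation, $Vol(\overline{ST_\theta(\alpha)}^{\R}\cap B_\epsilon(0))\thickapprox\int_0^\epsilon r^{n-1}\theta(r)^{\,n-\dim\alpha}\,dr$, with the constant-width horn $C r^d$ replaced by the variable width $\theta(r)r$, followed by the pointwise comparison of integrands via $\theta(r)^{\,q-p}\le\theta(r)\to0$. The only nuance worth noting is the degenerate case $\dim\alpha=0$, where $ST_\theta(\alpha)$ reduces to $\{0\}$ near the origin and the two-sided $\thickapprox$ for the numerator fails; since your argument only uses the upper bound for the numerator and the lower bound for the denominator (where $\dim\beta\ge1$), the proof is unaffected.
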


We have the following volume properties analogous to those 
in \cite{koikepaunescu}.

\begin{prop}\label{conevolume}
Let $\alpha$, $\beta \subset R^n$ be definable cones at $0 \in R^n$.
Suppose that $\dim \alpha < \dim \beta$.
Then, for $\theta \in \Phi$,
$$
\lim_{\epsilon \to 0} { Vol(\overline{ST_{\theta}(\alpha )}^{\R}
\cap B_{\epsilon}(0))
\over Vol(\overline{ST_{\theta}(\beta )}^{\R} \cap B_{\epsilon}(0)) } = 0.
$$
\end{prop}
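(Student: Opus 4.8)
The plan is to reduce the statement to the linear case treated in Lemma \ref{volumelemma}. The key point I would prove first is that for \emph{any} definable cone $\gamma\subset R^n$ with $d:=\dim\gamma\ge 1$ and any $\theta\in\Phi$ one has, as $\epsilon\to 0$,
$$
Vol(\overline{ST_{\theta}(\gamma)}^{\R}\cap B_{\epsilon}(0))\ \thickapprox\
Vol(\overline{ST_{\theta}(V_d)}^{\R}\cap B_{\epsilon}(0)),
$$
where $V_d$ is any fixed $d$-dimensional linear subspace of $R^n$. Applying this to $\alpha$ and to $\beta$, the quotient in the proposition becomes, up to bounded factors, the corresponding quotient for the linear subspaces $V_{\dim\alpha}$ and $V_{\dim\beta}$, which tends to $0$ by Lemma \ref{volumelemma} since $\dim\alpha<\dim\beta$. (The case $\dim\alpha=0$ is vacuous: then $\alpha=\{0\}$ and, because $\theta(t)<1$ for small $t>0$, the germ $ST_{\theta}(\{0\})$ is just $\{0\}$, so the numerator vanishes; note $\dim\beta\ge1$ always.)

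To prove the displayed comparison I would integrate in polar coordinates. Since $R^n$ is dense in $\R^n$, one has $\dist(x,\gamma)=\dist(x,\overline{\gamma}^{\R})$ for every $x\in\R^n$, so the sea-tangle set and the relevant open and closed versions of it are cut out by inequalities in $\dist(\cdot,\overline{\gamma}^{\R})$, and $\overline{\gamma}^{\R}$ is a cone. Handling the closures exactly as in the proof of Lemma \ref{volumelemma} and using the homogeneity $\dist(ry,\overline{\gamma}^{\R})=r\,\dist(y,\overline{\gamma}^{\R})$ for $y\in S^{n-1}$, one obtains
$$
Vol(\overline{ST_{\theta}(\gamma)}^{\R}\cap B_{\epsilon}(0))\ \thickapprox\
\int_0^{\epsilon} r^{\,n-1}\, v_{\gamma}(\theta(r))\, dr,\qquad
v_{\gamma}(\rho):=Vol_{n-1}\big(\{\,y\in S^{n-1}:\dist(y,\gamma)\le\rho\,\}\big),
$$
and the same identity with $V_d$ in place of $\gamma$. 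Hence it suffices to show that $v_{\gamma}(\rho)\thickapprox\rho^{\,n-d}$ as $\rho\to 0^+$ for every definable cone $\gamma$ of dimension $d\ge 1$ (this includes the subspaces $V_d$): then $v_{\gamma}(\theta(r))\thickapprox\theta(r)^{\,n-d}\thickapprox v_{V_d}(\theta(r))$ uniformly for $r\in(0,\epsilon]$ once $\epsilon$ is small, and the two integrals above are comparable.

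The two-sided estimate for $v_{\gamma}$ is obtained on the $(d-1)$-dimensional definable set $S:=\gamma\cap S^{n-1}$. For the upper bound (which is what is used for $\gamma=\alpha$): if $y\in S^{n-1}$ and $\dist(y,\gamma)\le\rho$, then the nearest point of $\overline{\gamma}^{\R}$ to $y$, radially projected onto $S^{n-1}$, lies within $2\rho$ of $y$, so $\dist(y,\overline{S}^{\R})\le 2\rho$; since a compact definable set of dimension $d-1$ can be covered by $\precsim\rho^{-(d-1)}$ balls of radius $\rho$ (a standard consequence of the cell decomposition theorem, valid for the closure $\overline{S}^{\R}$ as in Lemma \ref{volumelemma}), the $2\rho$-neighbourhood of $S$ in $S^{n-1}$ has $(n-1)$-volume $\precsim\rho^{\,n-d}$, whence $v_{\gamma}(\rho)\precsim\rho^{\,n-d}$. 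For the lower bound (needed for $\gamma=\beta$): by the cell decomposition $S$ contains a nonempty relatively compact $C^1$ submanifold $M$ of $S^{n-1}$ of dimension $d-1$; then $\mathrm{cone}(M)\subset\gamma$, and since $\dist(y,M)\le\dist_{S^{n-1}}(y,M)$ the set defining $v_{\gamma}(\rho)$ contains the $\rho$-tubular neighbourhood of $M$ in $S^{n-1}$, whose $(n-1)$-volume is $\succsim\rho^{\,(n-1)-(d-1)}=\rho^{\,n-d}$ by the elementary tube estimate for a $C^1$ submanifold of positive volume.

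The step I expect to be the main obstacle is precisely this control of the — in general curved — spherical section $S=\gamma\cap S^{n-1}$ of the cone, together with the passage of the covering-number and tube estimates to the closures taken in $\R^n$: this is the only ingredient genuinely beyond Lemma \ref{volumelemma}, and once the estimate $v_{\gamma}(\rho)\thickapprox\rho^{\,n-\dim\gamma}$ is in place, Lemma \ref{volumelemma} completes the proof.
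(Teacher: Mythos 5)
Your proposal is correct in outline and shares the paper's global strategy: reduce to the linear case by showing that $Vol(\overline{ST_{\theta}(\gamma)}^{\R}\cap B_{\epsilon}(0))$ is comparable, for a definable cone $\gamma$, to the same quantity for a linear subspace of dimension $\dim\gamma$. Where you differ is in how that comparison is proved. The paper partitions an equidimensional $\gamma$ into finitely many definable cones $\gamma_i$ that are graphs over linear subspaces $M_i$ under orthogonal projections $\Pi_i$, with the fibres $ST_{\theta}(\gamma)\cap\Pi_i^{-1}(u)$ of diameter at most $4\theta$; the two resulting inclusions give the comparison up to a constant change of width, which Lemma \ref{ctimes} absorbs. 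You instead slice by spheres, writing the volume as $\int_0^{\epsilon}r^{n-1}v_{\gamma}(\theta(r))\,dr$ and proving the two-sided estimate $v_{\gamma}(\rho)\thickapprox\rho^{\,n-\dim\gamma}$ by covering numbers (upper bound) and a tube around a graph piece (lower bound); this is a legitimate alternative, and in fact slightly stronger, since once $v_{\gamma}(\rho)\thickapprox\rho^{\,n-\dim\gamma}$ is known you do not even need Lemma \ref{volumelemma}: as $\theta$ is increasing with $\theta(\epsilon)\to0$, the ratio of the two integrals is $\precsim\theta(\epsilon)^{\dim\beta-\dim\alpha}$. Two cautions, both repairable at the paper's own level of rigour. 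First, the covering bound for $\overline{S}^{\R}$ and the tube lower bound are not consequences of plain cell decomposition, whose cells are graphs of continuous definable maps that need not be Lipschitz; you need a decomposition into graphs with bounded gradient (precisely the ingredient the paper uses, via the projections $\Pi_i$ here and via the stratification from \cite{shiota} in the proof of Lemma \ref{gctimes}), and you must pass to closures in $\R^n$ by Lipschitz extension, since $\overline{S}^{\R}$ is the closure of a set definable over $R$ and is not itself definable over $\R$. Second, to make the polar-coordinate identity honest you should extend $\theta$ to real arguments (possible, continuously, by monotonicity and density of $R$ in $\R$) and note that $\overline{ST_{\theta}(\gamma)}^{\R}$ differs from $\{x\in\R^n \mid \dist(x,\overline{\gamma}^{\R})\le\bar{\theta}(\|x\|)\,\|x\|\}$ only by a null set. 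So your route is sound and genuinely different in technique, but it relocates rather than avoids the bounded-gradient graph decomposition that is the technical core of the paper's proof.
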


\begin{proof}
Let $\gamma$ be a definable cone at $0 \in R^n$ of dimension $r$,
and let $M$ be an $r$-dimensional linear subspace of $R^n$.
Then the proposition follows easily from Lemma \ref{volumelemma}
and the fact that
\begin{equation}\label{equiv01}
Vol(\overline{ST_{\theta}(\gamma)}^{\R} \cap B_{\epsilon}(0)) 
\thickapprox 
Vol(\overline{ST_{\theta}(M)}^{\R} \cap B_{\epsilon}(0))
\end{equation}
for $\theta \in \Phi$.

Let us show (\ref{equiv01}).
We may assume that $\gamma$ is equidimensional.
Then there exist a finite partition of $\gamma$ into
$r$-dimensional definable cones $\gamma_1, \cdots , \gamma_s$
with $0 \in R^n$ as a vertex, and $r$-dimensional linear subspaces 
$M_1, \cdots , M_s$ of $R^n$ such that for each orthogonal 
projection $\Pi_i : R^n \to M_i$, $1 \le i \le s$,
$\gamma_i$ is expressed as the graph of a definable map
from $\Pi_i(\gamma_i) \subset M_i$ and
the diameter of $ST_{\theta}(\gamma ) \cap \Pi_i^{-1}(u)$,
$u \in \Pi_i(\gamma_i)$, is less than or equal to $4\theta$.
Then we can see that
\begin{equation}\label{equiv02}
Vol(\overline{ST_{\theta}(\gamma)}^{\R} \cap B_{\epsilon}(0)) \precsim
Vol(\overline{ST_{4\theta}(M)}^{\R} \cap B_{\epsilon}(0)).
\end{equation}
On the other hand, we may assume that one of $\Pi_i(\gamma_i)$'s is
a closed cone in $M_i$, taking a finite subdivision of $\gamma_i$'s 
if necessary.
Then we can see that $M_i$ is covered with a finite number of 
$r$-dimensional closed cones of the same size as $\Pi_i(\gamma_i)$.
It follows that
$$
Vol(\overline{ST_{\theta}(M_i)}^{\R} \cap B_{\epsilon}(0)) \precsim
Vol(\overline{ST_{\theta}(M_i) \cap \Pi_i^{-1}(\Pi_i(\gamma_i))}^{\R}
\cap B_{\epsilon}(0)) \precsim
Vol(\overline{ST_{\theta}(\gamma_i)}^{\R} \cap B_{\epsilon}(0)).
$$
Therefore we have
\begin{equation}\label{equiv03}
Vol(\overline{ST_{\theta}(\gamma)}^{\R} \cap B_{\epsilon}(0)) \succsim
Vol(\overline{ST_{\theta}(M)}^{\R} \cap B_{\epsilon}(0)).
\end{equation}
Then (\ref{equiv01}) follows from (\ref{equiv02}), (\ref{equiv03})
and Lemma \ref{ctimes}.
\end{proof}

The next lemma follows in the same way as Lemma 7.3 
in \cite{koikepaunescu}:

\begin{lem}\label{definabledimension}
Let $A \subset R^n$ be a definable
 set-germ at $0 \in R^n$
such that $0 \in \overline{A}$.
Then we have $\dim LD(A) \le \dim A$.
\end{lem}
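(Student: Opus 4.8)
The plan is to realise $LD(A)$, together with the origin, as a subset of the frontier of a definable family whose total space has dimension $1+\dim A$, and then to apply the standard o-minimal inequality $\dim(\overline S\setminus S)<\dim S$, valid for any nonempty definable set $S$ (see \cite{dries}). First I would introduce the definable set
$$
\widehat A:=\{(t,x)\in R\times R^n\mid t>0,\ tx\in A\},
$$
which is definable because $A$ is and $(t,x)\mapsto tx$ is a definable map. Then I would compute $\dim\widehat A$: the assignment $(t,x)\mapsto(t,tx)$ is a definable homeomorphism of $\widehat A$ onto $\{t\in R\mid t>0\}\times A$, with definable inverse $(t,y)\mapsto(t,y/t)$, so that $\dim\widehat A=1+\dim A$ by invariance of dimension under definable homeomorphisms and additivity of dimension on products.

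The key step is to check that $\{0\}\times LD(A)\subset\overline{\widehat A}\setminus\widehat A$. If $D(A)=\emptyset$ then $LD(A)=\emptyset$ and there is nothing to prove; otherwise fix $a\in D(A)$ and $s>0$, and choose a sequence $x_i\in A\setminus\{0\}$ with $x_i\to 0$ and $x_i/\|x_i\|\to a$. Setting $t_i:=\|x_i\|/s$ and $y_i:=s\,x_i/\|x_i\|$, one has $t_i y_i=x_i\in A$, so $(t_i,y_i)\in\widehat A$, and $(t_i,y_i)\to(0,sa)$ in $R^{n+1}$; hence $(0,sa)\in\overline{\widehat A}$. Letting $s\to 0$ gives $(0,0)\in\overline{\widehat A}$ as well, so $\{0\}\times LD(A)\subset\overline{\widehat A}$; since every point of the left-hand set has first coordinate $0$ whereas every point of $\widehat A$ has positive first coordinate, this inclusion lands in the frontier $\overline{\widehat A}\setminus\widehat A$.

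Combining the two steps, the frontier inequality gives
$$
\dim LD(A)=\dim\bigl(\{0\}\times LD(A)\bigr)\le\dim\bigl(\overline{\widehat A}\setminus\widehat A\bigr)<\dim\widehat A=1+\dim A,
$$
and therefore $\dim LD(A)\le\dim A$. Note that $\widehat A\neq\emptyset$ since $0\in\overline A$, so the frontier inequality applies. This argument uses no local compactness of $R$: the only inputs are invariance of dimension under definable homeomorphisms, additivity of dimension on products, and the frontier inequality, all available in any o-minimal structure. The only point that needs a little attention is the bookkeeping at the exceptional point $0$ together with the degenerate case $D(A)=\emptyset$; everything else is routine o-minimal dimension theory, which is exactly why this goes ``in the same way as'' Lemma 7.3 in \cite{koikepaunescu}.
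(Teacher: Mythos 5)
Your proof is correct. The scaling family $\widehat A$ is definable, the map $(t,x)\mapsto(t,tx)$ identifies it with $\{t>0\}\times A$, so $\dim\widehat A=1+\dim A$; the verification that $\{0\}\times LD(A)$ lies in the frontier is sound (the quantities $\|x_i\|/s$ and $sx_i/\|x_i\|$ stay in $R$ because $R$ is real closed, and convergence in the metric induced from $\R$ does give adherence in $R^{n+1}$ for Archimedean $R$); and the o-minimal frontier inequality $\dim(\overline S\setminus S)<\dim S$ finishes the argument. One point you should state explicitly: the final step (and the very statement $\dim LD(A)\le\dim A$) uses that $LD(A)$ is definable, which is Remark \ref{remark30}(2) of the paper. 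As for comparison: the paper gives no written proof here, saying only that the lemma ``follows in the same way as Lemma 7.3'' of \cite{koikepaunescu}; that subanalytic argument places $D(A)\times\{0\}$ in the frontier of the preimage of $A\setminus\{0\}$ under the polar-coordinate (spherical blow-up) map $S^{n-1}\times(0,\infty)\ni(u,t)\mapsto tu$ and then drops one dimension. Your proof is the same underlying idea, but implemented with the scaling family $\{(t,x)\mid t>0,\ tx\in A\}$ instead of the blow-up, and it has the merit of being completely self-contained and of invoking only facts (invariance of dimension under definable bijections, the product formula, the frontier inequality) that hold over an arbitrary real closed field; in particular it transfers verbatim to the non-Archimedean setting of Lemma \ref{gdefinabledimension}, which the paper simply declares clear.
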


\begin{prop}\label{STvolume} 
Let $A, B$ be set-germs at $0$ in $R^n$
such that $0 \in\ \overline{A} \cap \overline{B}$. 
Suppose that $A$ and $B$ are $ST$-equivalent.
Then there is $\theta_1 \in \Phi$ such that
$$
Vol(\overline{ST_{\theta}(A)}^{\R} \cap B_{\epsilon}(0)) \thickapprox
Vol(\overline{ST_{\theta}(B)}^{\R} \cap B_{\epsilon}(0))
$$
for any $\theta \in \Phi$ with $\theta \ge \theta_1$.
\end{prop}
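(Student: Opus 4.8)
The plan is to deduce the statement from a purely set-theoretic comparison of the two sea-tangle neighbourhoods: from $ST$-equivalence I will extract a single $\theta_1\in\Phi$ and a constant $c$ such that, for every $\theta\in\Phi$ with $\theta\ge\theta_1$, one has the mutual inclusions $ST_\theta(B)\subset ST_{c\theta}(A)$ and $ST_\theta(A)\subset ST_{c\theta}(B)$ as germs at $0$. Granting this, monotonicity of volume under inclusion together with Lemma~\ref{ctimes} (which replaces $c\theta$ by $\theta$ up to $\thickapprox$) gives at once $Vol(\overline{ST_\theta(B)}^{\R}\cap B_\epsilon(0))\le Vol(\overline{ST_{c\theta}(A)}^{\R}\cap B_\epsilon(0))\thickapprox Vol(\overline{ST_\theta(A)}^{\R}\cap B_\epsilon(0))$, i.e. $Vol(\overline{ST_\theta(B)}^{\R}\cap B_\epsilon(0))\precsim Vol(\overline{ST_\theta(A)}^{\R}\cap B_\epsilon(0))$, and symmetrically the reverse, hence $\thickapprox$.

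To carry this out, fix by Definition~\ref{oSTequivalence} functions $\psi_1,\psi_2\in\Phi$ with $B\subset ST_{\psi_1}(A)$ and $A\subset ST_{\psi_2}(B)$ as germs at $0$, and set $\theta_1(t):=\max\bigl(4\psi_1(2t),\,4\psi_2(2t)\bigr)$. Each germ $t\mapsto 4\psi_i(2t)$ is odd, strictly increasing, continuous, definable and vanishes at $0$, so lies in $\Phi$; since $\Phi$ is totally ordered near $0$ by Monotonicity, $\theta_1$ is near $0$ just the larger of the two, so $\theta_1\in\Phi$. Now let $\theta\ge\theta_1$ and take $x\ne 0$ near $0$ with $x\in ST_\theta(B)$. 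Then $\dist(x,B)\le\theta(\|x\|)\|x\|$, so (this bound being positive) we may choose $b\in B$ with $\|x-b\|\le 2\theta(\|x\|)\|x\|$; since $\theta(\|x\|)\to0$ as $\|x\|\to0$, for $\|x\|$ small this forces $\|b\|\le\|x\|+\|x-b\|\le 2\|x\|$. As $b\in B\subset ST_{\psi_1}(A)$, choose $a\in A$ with $\|b-a\|\le 2\psi_1(\|b\|)\|b\|$; using that $\psi_1$ is increasing and $\|b\|\le2\|x\|$, this gives $\|b-a\|\le 4\psi_1(2\|x\|)\|x\|\le\theta_1(\|x\|)\|x\|\le\theta(\|x\|)\|x\|$. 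Hence
$$
\dist(x,A)\le\|x-a\|\le\|x-b\|+\|b-a\|\le 3\theta(\|x\|)\|x\|,
$$
i.e. $x\in ST_{3\theta}(A)$. Thus $ST_\theta(B)\subset ST_{3\theta}(A)$ as germs at $0$, and exchanging $A$ and $B$ (now using $\psi_2$) yields $ST_\theta(A)\subset ST_{3\theta}(B)$.

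It then remains only to assemble the pieces. For $\theta\ge\theta_1$, passing to closures in $\R^n$ and intersecting with $B_\epsilon(0)$ for $\epsilon$ small, the inclusion $ST_\theta(B)\subset ST_{3\theta}(A)$ gives $\overline{ST_\theta(B)}^{\R}\cap B_\epsilon(0)\subset\overline{ST_{3\theta}(A)}^{\R}\cap B_\epsilon(0)$, whence, by Lemma~\ref{ctimes} with $c=3$,
$$
Vol(\overline{ST_\theta(B)}^{\R}\cap B_\epsilon(0))\le Vol(\overline{ST_{3\theta}(A)}^{\R}\cap B_\epsilon(0))\thickapprox Vol(\overline{ST_\theta(A)}^{\R}\cap B_\epsilon(0)),
$$
so $Vol(\overline{ST_\theta(B)}^{\R}\cap B_\epsilon(0))\precsim Vol(\overline{ST_\theta(A)}^{\R}\cap B_\epsilon(0))$; the reverse $\precsim$ follows in the same way from $ST_\theta(A)\subset ST_{3\theta}(B)$, and together they give the asserted $\thickapprox$.

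The one genuinely delicate point — and the reason the conclusion is stated only for $\theta\ge\theta_1$, not for all $\theta\in\Phi$ — is the construction of the threshold $\theta_1$. The point $b\in B$ approximating $x$ only satisfies $\|b\|\le2\|x\|$, so the error arising from $B\subset ST_{\psi_1}(A)$ is controlled by $\psi_1(2\|x\|)$ rather than $\psi_1(\|x\|)$, and in a non-polynomially-bounded o-minimal structure $\psi_1(2t)$ need not be $\thickapprox\psi_1(t)$ (e.g. $\psi_1(t)=e^{-1/t}$ in $\R_{\exp}$). Requiring $\theta$ to dominate the rescaled witnesses $t\mapsto 4\psi_i(2t)$ is exactly what makes the triangle-inequality estimate close up; the rest of the argument is routine.
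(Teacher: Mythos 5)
Your proof is correct and follows essentially the same route as the paper: extract the two witnesses of $ST$-equivalence, define the threshold $\theta_1$ from their rescalings $t\mapsto\psi_i(2t)$, use the triangle inequality (with the bound $\|b\|\le 2\|x\|$) to obtain mutual inclusions of the form $ST_{\theta}(B)\subset ST_{c\theta}(A)$ and vice versa for $\theta\ge\theta_1$, and then conclude via Lemma~\ref{ctimes}. The paper packages the same estimate as the chain $ST_{\theta/2}(A)\subset ST_{2\theta}(B)\subset ST_{4\theta}(A)$, which is only a cosmetic difference from your symmetric pair of inclusions with constant $3$.
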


\begin{proof}
Let $\theta_3, \ \theta_4 \in \Phi$ such that $A \subset
ST_{{\theta_3 \over 2}}(B), \ B \subset ST_{{\theta_4 \over 2}}(A)$. 
Take $\theta_1 \in \Phi$ so that
$\theta_1(t) \geq 2 \max(\theta_3(2t),\theta_4(2t))$ for $0<t<1$. 

\vspace{3mm}

\noindent {\em Claim}. For any $\theta\in\Phi$ with $\theta\geq\theta_1$,
we have
$$
ST_{{\theta \over 2}}(A) \subset ST_{{\theta \over 2}}
(ST_{{\theta_3 \over 2}}(B)) \subset ST_{2\theta}(B) 
\subset ST_{4\theta}(A)
$$
as germs at $0 \in R^n$.

\begin{proof}
To see the second inclusion, 
let $x \in ST_{{\theta \over 2}}(ST_{{\theta_3 \over 2}}(B))$ 
with $\theta(\|x\|) \le 1$. 
Then there exists $y \in R^n$ such that
$d(y,B) \leq {\theta_3 \over 2}(\|y\|)\| y \|$ and $d(x,y) \leq
\theta(\|x\|)\|x\|$.
We also have 
$\|y\| \leq \| x \| + \theta(\| x \| )\| x \| \leq 2 \| x \|$. 
Take $z \in B$ such that $d(y,z) \leq \theta_3(\| y \|)\| y \|$. 
Then
\[\begin{array}{llll}
    d(x,z) &\leq d(x,y)+d(y,z) \\
    &\leq \theta(\|x\|)\|x\|+\theta_3(\|y\|)\|y\|\\
    &\leq \theta(\|x\|)\|x\|+\theta_3(2\|x\|)2\|x\| \\
    &\leq \theta(\|x\|)\|x\|+\theta(\|x\|)\|x\| = 2\theta(\|x\|)\|x\|.
  \end{array}
\]
This implies $x \in ST_{2\theta}(B)$, and hence
$ST_{{\theta \over 2}}(ST_{{\theta_3 \over 2}}(B)) \subset ST_{2\theta}(B)$.

Let $x \in ST_{2\theta}(ST_{{\theta_4 \over 2}}(A))$ 
with $\theta(\|x\|) \le {1 \over 3}$. 
Similarly as above, we can show
$$
ST_{2\theta}(B) \subset ST_{2\theta}(ST_{{\theta_4 \over 2}}(A)) 
\subset ST_{4\theta}(A).
$$ 
\end{proof}

The statement of the proposition follows from
this claim and Lemma \ref{ctimes}.
\end{proof}

The following corollary is an obvious consequence of
Theorem \ref{eqtheorem}, Lemma \ref{definabledimension} and
Propositions \ref{conevolume} and  \ref{STvolume}.

\begin{cor}\label{volumeratio}
Let $\alpha \subset R^n$ be a definable set-germ at $0 \in R^n$ 
such that $0 \in \overline{\alpha}$,
and let $\beta \subset R^n$ be a definable cone at $0 \in R^n$.
Suppose that $\dim \alpha < \dim \beta$.
Then there is $\theta_1 \in \Phi$ such that
$$
\lim_{\epsilon \to 0} { Vol(\overline{ST_{\theta}(\alpha )}^{\R}
\cap B_{\epsilon}(0))
\over Vol(\overline{ST_{\theta}(\beta )}^{\R} \cap B_{\epsilon}(0)) } = 0
$$
for any $\theta \in \Phi$ with $\theta \ge \theta_1$.
\end{cor}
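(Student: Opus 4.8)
The plan is simply to chain the four cited results in the right order. First I would note that, since $\alpha$ is definable, Theorem \ref{eqtheorem} makes $\alpha$ and its real tangent cone $LD(\alpha)$ $ST$-equivalent, and $LD(\alpha)$ is itself definable (Remark \ref{remark30}(2)), hence a definable cone at $0 \in R^n$. Then Lemma \ref{definabledimension} gives $\dim LD(\alpha) \le \dim \alpha$, so the hypothesis $\dim \alpha < \dim \beta$ upgrades to $\dim LD(\alpha) < \dim \beta$ with $LD(\alpha)$ and $\beta$ both definable cones.

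Next I would feed the $ST$-equivalent pair $\alpha$, $LD(\alpha)$ into Proposition \ref{STvolume}, obtaining a $\theta_1 \in \Phi$ for which
$$
Vol(\overline{ST_{\theta}(\alpha)}^{\R} \cap B_{\epsilon}(0)) \thickapprox Vol(\overline{ST_{\theta}(LD(\alpha))}^{\R} \cap B_{\epsilon}(0))
$$
whenever $\theta \ge \theta_1$; that is, near $\epsilon = 0$ the two volume functions of $\epsilon$ agree up to bounded multiplicative constants. On the other hand, applying Proposition \ref{conevolume} to the definable cones $LD(\alpha)$ and $\beta$ (no threshold on $\theta$ is needed there) gives
$$
\lim_{\epsilon \to 0} \frac{Vol(\overline{ST_{\theta}(LD(\alpha))}^{\R} \cap B_{\epsilon}(0))}{Vol(\overline{ST_{\theta}(\beta)}^{\R} \cap B_{\epsilon}(0))} = 0
$$
for every $\theta \in \Phi$.

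To finish, I would replace the numerator $Vol(\overline{ST_{\theta}(LD(\alpha))}^{\R} \cap B_{\epsilon}(0))$ in the last display by $Vol(\overline{ST_{\theta}(\alpha)}^{\R} \cap B_{\epsilon}(0))$ using the $\thickapprox$ above: multiplying the numerator by a factor bounded above and below by positive constants does not affect whether the ratio tends to $0$, so the limit persists for all $\theta \ge \theta_1$. There is essentially no obstacle here; the only thing to keep straight is the bookkeeping of the threshold, namely that $\theta_1$ is produced entirely by Proposition \ref{STvolume} and that this same $\theta_1$ then works verbatim in the stated conclusion.
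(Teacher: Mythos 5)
Your proposal is correct and follows exactly the route the paper intends: the paper simply declares the corollary an obvious consequence of Theorem \ref{eqtheorem}, Lemma \ref{definabledimension} and Propositions \ref{conevolume} and \ref{STvolume}, and your chain (pass from $\alpha$ to the definable cone $LD(\alpha)$ via $ST$-equivalence, note $\dim LD(\alpha)\le\dim\alpha<\dim\beta$, apply the cone-volume limit, then transfer it back through the $\thickapprox$ supplied by Proposition \ref{STvolume} with its threshold $\theta_1$) is precisely that argument spelled out.
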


We next describe a key lemma needed for proving our main theorem.

\begin{lem}\label{maintool}
Let $h : (R^n,0) \to (R^n,0)$ be a bi-Lipschitz homeomorphism,
let $E \subset R^n$ be a definable set-germ at $0 \in R^n$
such that $0 \in \overline{E}$, and let $F := h(E)$.
Suppose that $F$ and $LD(F)$ are $ST$-equivalent and
$LD(F)$ is definable.
Then we have $\dim LD(F) \le \dim E$.
\end{lem}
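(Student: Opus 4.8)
The plan is to argue by contradiction and reduce the statement to a comparison of volumes of sea-tangle neighbourhoods of the two definable cones $\alpha:=LD(E)$ and $\beta:=LD(F)$. First I would record the easy reductions: since $E$ is definable, $\alpha$ is definable (Remark \ref{remark30}(2)), $\dim\alpha\le\dim E$ (Lemma \ref{definabledimension}), and $E\stsim\alpha$ (Theorem \ref{eqtheorem}); by hypothesis $\beta$ is definable, it is a cone, and $F\stsim\beta$. Assume $\dim LD(F)=\dim\beta>\dim E$. Then $\dim\alpha\le\dim E<\dim\beta$, so Corollary \ref{volumeratio} applied to $(\alpha,\beta)$ yields $\theta_1\in\Phi$ such that
$$\lim_{\epsilon\to 0}\frac{Vol(\overline{ST_\theta(\alpha)}^{\R}\cap B_{\epsilon}(0))}{Vol(\overline{ST_\theta(\beta)}^{\R}\cap B_{\epsilon}(0))}=0$$
for every $\theta\ge\theta_1$. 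I will contradict this by producing one admissible $\theta$ for which the ratio stays bounded below by a positive constant.

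For that I would transport volumes through $h$. Let $K_1\le K_2$ be Lipschitz constants of $h$; enlarging $K_2$ and shrinking $K_1$ I may assume $0<K_1\le 1\le K_2$, and $h$ extends, by density of $R^n$ in $\R^n$, to a bi-Lipschitz homeomorphism $\bar h$ of a neighbourhood of $0$ in $\R^n$ with the same constants. I would fix $\theta\in\Phi$ lying above all the (finitely many) thresholds that occur and, in addition, chosen so that $t\mapsto\theta(t/K_1)$ and $t\mapsto\theta(t/K_2)$ are comparable to $\theta$ up to positive constant factors; such a $\theta$ exists over any finite family of thresholds (a small enough power of $t$ works, unless the thresholds are not all polynomially bounded, in which case their maximum is already slowly varying). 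Now the Sandwich Lemma (Lemma \ref{osandwichlemma}(ii)) applied to $E$ gives $ST_{\theta_2}(F)\subset h(ST_\theta(E))$ with $\theta_2(t)=\tfrac{K_1}{K_2}\theta(t/K_2)$; taking closures in $\R^n$, so that $\overline{h(ST_\theta(E))}^{\R}=\bar h(\overline{ST_\theta(E)}^{\R})$, intersecting with $B_{\epsilon}(0)$, using $\bar h^{-1}(B_{\epsilon}(0))\subset B_{\epsilon/K_1}(0)$ and that $\bar h$ multiplies $n$-dimensional volume by at most $K_2^n$, I obtain
$$Vol(\overline{ST_{\theta_2}(F)}^{\R}\cap B_{\epsilon}(0))\ \le\ K_2^n\,Vol(\overline{ST_\theta(E)}^{\R}\cap B_{\epsilon/K_1}(0)).$$
Replacing $F$ by $\beta$ and $E$ by $\alpha$ via Proposition \ref{STvolume} (for $F\stsim\beta$ and for $E\stsim\alpha$; the widths that appear meet the needed thresholds because $\theta$ and its rescalings are comparable), and then normalising widths and radii with Lemma \ref{ctimes} and the scaling identity $Vol(\overline{ST_{\theta(\cdot/c)}(\gamma)}^{\R}\cap B_{\epsilon}(0))=c^n\,Vol(\overline{ST_\theta(\gamma)}^{\R}\cap B_{\epsilon/c}(0))$, valid for any definable cone $\gamma$ and $c>0$, every rescaling introduced along the way costs only a constant factor (here the cone property of $\alpha,\beta$ and the choice of $\theta$ are used), and the inequality becomes $Vol(\overline{ST_\theta(\beta)}^{\R}\cap B_{\epsilon}(0))\precsim Vol(\overline{ST_\theta(\alpha)}^{\R}\cap B_{\epsilon}(0))$, contradicting Corollary \ref{volumeratio}. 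Hence $\dim LD(F)\le\dim E$.

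The step I expect to be the main obstacle is exactly this two-scale bookkeeping: both the Sandwich Lemma and the change of variables under $\bar h$ introduce compositions $\theta(\cdot/K_i)$ of the width with linear maps together with rescalings of the radius by $K_i$, and in a general, non-polynomially-bounded o-minimal structure $\theta(t/c)$ need not be comparable to $\theta(t)$, so one cannot simply match the two $\theta$'s appearing on the two sides. The resolution is to choose $\theta$ so that these rescalings are harmless and to exploit the cone structure of $\alpha$ and $\beta$ to trade a rescaling of the radius for a rescaling of the width (and conversely), after which Lemma \ref{ctimes} absorbs everything into multiplicative constants. All the remaining ingredients — the $ST$-equivalences $E\stsim LD(E)$ and $F\stsim LD(F)$, Lemma \ref{definabledimension}, Proposition \ref{STvolume}, and Corollary \ref{volumeratio} — are quoted directly from what precedes.
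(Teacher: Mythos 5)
Your argument is correct and is essentially the paper's proof: transport the volumes of sea-tangle neighbourhoods through the bi-Lipschitz extension $\overline{h}$ via the Sandwich Lemma, pass between $F$, $LD(F)$ and $E$ (resp.\ $LD(E)$) using Proposition \ref{STvolume} and Lemma \ref{ctimes}, and contradict Corollary \ref{volumeratio} under the assumption $\dim LD(F)>\dim E$. The only differences are inessential: the paper applies Corollary \ref{volumeratio} with $\alpha=E$ itself (so your detour through $LD(E)$ and the cone-rescaling identity are not needed there), keeps the two-sided comparison (\ref{equiv1}) as an immediate consequence of Lemmas \ref{osandwichlemma} and \ref{ctimes} where you retain only the one-sided estimate, and leaves implicit the width-rescaling bookkeeping ($\theta(t/K_i)$ versus $\theta(t)$) that you spell out.
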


\begin{proof}
Since $h : (R^n,0) \to (R^n,0)$ is a bi-Lipschitz homeomorphism
and $R^n$ is dense in $\R^n$, $h$ has a natural extension to
a bi-Lipschiz homeomorphism $\overline{h} : (\R^n,0) \to (\R^n,0)$.
Then we have
$$
\overline{h(ST_{\theta}(E))}^{\R} =
\overline{h}(\overline{ST_{\theta}(E)}^{\R}) \ \
\text{for} \ \theta \in \Phi.
$$
Therefore it follows from Lemmas \ref{osandwichlemma} and \ref{ctimes} that
\begin{equation}\label{equiv1}
Vol(\overline{ST_{\theta}(F)}^{\R} \cap B_{\epsilon}(0)) \thickapprox  
Vol(\overline{ST_{\theta}(E)}^{\R} \cap B_{\epsilon}(0)).
\end{equation}

On the other hand, $F$ and $LD(F)$ are $ST$-equivalent.
By Proposition \ref{STvolume},
there is $\theta_1 \in \Phi$ such that
\begin{equation}\label{equiv2}
Vol(\overline{ST_{\theta}(F)}^{\R} \cap B_{\epsilon}(0)) \thickapprox
Vol(\overline{ST_{\theta}(LD(F))}^{\R} \cap B_{\epsilon}(0))
\end{equation}
for any $\theta \in \Phi$ with $\theta \ge \theta_1$.

By (\ref{equiv1}) and (\ref{equiv2}), we have
\begin{equation}\label{equiv3}
1 \thickapprox 
{Vol(\overline{ST_{\theta}(F)}^{\R} \cap B_{\epsilon}(0)) \over
Vol(\overline{ST_{\theta}(LD(F))}^{\R} \cap B_{\epsilon}(0))} \thickapprox  
{Vol(\overline{ST_{\theta}(E)}^{\R} \cap B_{\epsilon}(0)) \over
Vol(\overline{ST_{\theta}(LD(F))}^{\R} \cap B_{\epsilon}(0))}
\end{equation}
for any $\theta \in \Phi$ with $\theta \ge \theta_1$.
Assume that $\dim LD(F) > \dim E$.
Then, by Corollary \ref{volumeratio}, 
the RHS ratio in (\ref{equiv3}) tends to $0$ as 
$\epsilon \to 0$, if $\theta$ is sufficiently big.
This is a contradiction.
Thus we have $\dim LD(F) \le \dim E$.
\end{proof}

Let us show Proposition \ref{reductions}.
The sets $A$ and $ h(A)$ are assumed definable.
Therefore, by Lemma \ref{LD(h(LD(A)))}, $LD(h(A))=LD(h(LD(A)))$
is definable, and it follows from Theorem \ref{eqtheorem}
that $LD(A)$ is $ST$-equivalent to $A$.
Then, by Proposition \ref{ST-equivalence},
$h(LD(A))$ is $ST$-equivalent to $h(A)$.
In addition, it follows from the definability of $h(A)$, that
$h(A)$ is $ST$-equivalent to $LD(h(A)) = LD((h(LD(A)))$.
Since the $ST$-equivalence is an equivalence relation,
$h(LD(A))$ is $ST$-equivalent to $LD(h(LD(A)))$.
Thus the proposition follows from Lemma \ref{maintool} 
with $E = LD(A)$ and $F = h(LD(A))$,
since $\dim h(LD(A)) = \dim LD(A)$.

This completes the proof of our main theorem.


\bigskip
\section{General real closed field case}
\label{generalcase}
\medskip

In this section we formulate and prove our main theorem for an arbitrary
 real
 closed field.
Let $R$ denote a real closed field with an o-minimal structure 
and consider the topology on $R$ given by the open intervals of $R$,
analogous to that on  $\R$. 
We have already proved our main theorem for an arbitrary 
Archimedean real closed
field. 
An example of a non-Archimedean real closed field is the field of 
Puiseux series, where a {\it Puiseux series} is a power series of the form 
$\sum_{i=k}^\infty a_i t^{i/p}$ for $a_i \in \R$,
$p > 0 \in \N$ and $k \in \Z$ 
such that $\sum_{i=\max (0,k)}^\infty a_i t^i$ is a formal
(convergent) power series in one variable $t$. 
One reason for why we consider problems on a general real closed field $R$, 
is that some problems on $\R$ are solved by replacing $\R$ with $R$. 
Actually, the Hilbert 17th problem is a famous illustration of this.

In order to treat our main theorem for $R$,
we need to modify the previous definitions.
Let $A$ and $B$ always denote subsets of $R^n$.
Set
$$
dist(A,B) = \{ t \ge 0 \in R \ | \ \forall a \in A \ \forall b \in B, 
t \le \| a - b \| \}.
$$
Here for $(x_1, \cdots , x_n) \in R^n$ we define $\| (x_1, \cdots , x_n)\|$ 
to be $\max_{i=1, \cdots ,n} |x_i|$.
Then $dist(A,B)$ is a closed connected subset of $R$. 
In case $R = \R$, it is basically the closed interval with ends 0 and the 
usual 
distance between $A$ and $B$. 
In the general case if $A$ and $B$ are definable, $dist(A,B)$ is also a 
closed interval with ends 
0 and 
a number in $R$. 
(By abuse of notation $dist(A,B)$ denotes also the right end number 
in the definable case.)
However, for general subsets this is not the case, even if $R$ 
is Archimedean. 
Nevertheless in the Archimedean case we may 
understand $dist(A,B)$ as a real number. 
For a general $R$ we do not know such an extension field, and
therefore we have to define $dist(A,B)$ to be a subset of $R$.
For subsets $X$ and $Y$ of $R$, $X \le Y$ means by definition that 
$\forall x \in X\ \exists y\in Y\ x\le y$, and $X + Y$ denotes 
the set $\{ x + y \ | \ x \in X,y \in Y \}$.

Let $D(A)$ denote the subset of 
$S^{n-1} = \{ x \in R^n \ | \ \| x \| = 1 \}$ 
consisting of points $a$ such that
$\forall \epsilon > 0 \in R \ \forall \delta > 0 \in R\ 
\exists x\in A-\{0\}$ with $\| x \| \le \delta,\ 
dist(a,{x \over \| x \|})< \epsilon.
$
In the Archimedean case, $D(A)$ coincides with that in Definition 2.2. 
It may be empty for general $A$, but if $A$ is definable 
then it is definable and not empty. 
We define $LD(A)$ in the same way as before, it is definable 
if $A$ is definable. 

In the sense of the above notions of distance and direction set,
we have the following:

\begin{thm}\label{non-Archimedean}
Let $A$, $B \subset R^n$ be definable set-germs at $0 \in R^n$ 
such that $0 \in \overline{A} \cap \overline{B}$, and let 
$h : (R^n,0) \to (R^n,0)$ be a bi-Lipschitz homeomorphism.
Suppose that $h(A), \ h(B)$ are also definable.
Then we have
$$
\dim (D(h(A)) \cap D(h(B))) = \dim (D(A) \cap D(B)).
$$
\end{thm}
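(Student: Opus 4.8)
The plan is to prove Theorem \ref{non-Archimedean} by showing that the whole machinery of \S\S 4--6 survives the passage to an arbitrary real closed field $R$, once every statement has been recast in a first-order, ``logic-friendly'' way --- that is, without sequences, without an ambient copy of $\R$, and without Lebesgue measure. Since $A,B,h(A),h(B)$ are assumed definable, the modified notions of \S 7 are harmless here: $dist(x,A)$ for definable $A$ is an honest element of $R$, and $D(A),LD(A)$ are nonempty definable cones, so $ST_\theta(A)$, the relation $\stsim$, and all the statements of \S 4 make literal sense over $R$. First I would check that \S 4 goes through verbatim: its only tools are Monotonicity, the Curve Selection Lemma, the Sandwich Lemma (pure estimates with $K_1,K_2\in R$), and the Łojasiewicz inequality for definable functions (\cite{driesmiller}, \cite{taleloi}) --- none of which uses local compactness. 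This gives over $R$: Theorem \ref{eqtheorem} ($A\stsim LD(A)$ for definable $A$), Proposition \ref{ST-equivalence}, Theorem \ref{iff}, and Lemma \ref{LD(h(LD(A)))} (that $LD(h(A))=LD(h(LD(A)))$ is definable). For \S 5 I would replace the sequence-selection language by the quantifier formulation of Remark \ref{remark40}, after which Lemma \ref{odirectionrel}, Proposition \ref{h(LD(A))} and the remaining statements are obtained with the same proofs.

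Next I would perform the reductions. The reduction arguments of \S 6 of \cite{koikepaunescu} are o-minimal in nature and apply over $R$ as well, so together with the symmetry trick --- replacing $h$ by $h^{-1}$ and $(A,B)$ by $(h(A),h(B))$, noting that both sides of the claimed equality are computed from definable data --- the theorem is reduced to the analogue of Proposition \ref{reductions}: for definable $A$ with $h(A)$ definable, $\dim h(LD(A)) \geq \dim LD(h(LD(A)))$. Exactly as in \S 6 this follows from Lemma \ref{maintool} applied with $E=LD(A)$, $F=h(LD(A))$: using Theorem \ref{eqtheorem}, Proposition \ref{ST-equivalence} and the definability of $h(A)$, one checks that $F$ is $ST$-equivalent to the definable cone $LD(F)=LD(h(A))$, so it remains to prove $\dim LD(F)\le\dim E$.

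The one place that genuinely used $\R$ is the volume argument in the proof of Lemma \ref{maintool}, and I expect this to be the main obstacle; it is also where the logic flavour enters. The Lebesgue volume is used there only to extract one qualitative fact --- Proposition \ref{conevolume} --- that the sea-tangle neighbourhood of a definable cone $\beta$ is asymptotically much fatter than that of a definable cone $\alpha$ of strictly smaller dimension. Over $R$ I would replace this by a definable packing/covering count: for every fixed $m\in\N$ one has, for all small $\epsilon>0$ and all sufficiently large $\theta\in\Phi$, that $ST_\theta(\beta)\cap B_{\epsilon}(0)$ contains $m$ points pairwise at distance $>\theta(\epsilon)\epsilon$ while $ST_\theta(\alpha)\cap B_{\epsilon}(0)$ is covered by a number of $\theta(\epsilon)\epsilon$-balls bounded independently of $\epsilon$ --- each such assertion being a first-order statement about the o-minimal structure $\mathcal D$ that one proves uniformly in $R$ from the Cell Decomposition Theorem and o-minimal dimension theory (this is the ``first-order shadow'' of the volume estimate, valid over every $R$ without local compactness). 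Feeding these counts into the Sandwich-Lemma comparisons of $ST_\theta(h(E))$ with $ST_\theta(E)$ and of $ST_\theta(F)$ with $ST_\theta(LD(F))$ --- valid over $R$ and no longer mentioning $\R$ --- reproduces the contradiction of \S 6: if $\dim LD(F)>\dim E$, then $ST_\theta(LD(F))\cap B_{\epsilon}(0)$ would be arbitrarily much fatter (in the packing sense) than a set bi-Lipschitz-equivalent to $ST_\theta(E)\cap B_{\epsilon}(0)$, which is impossible. Hence $\dim LD(F)\le\dim E$, finishing the reduction and the theorem. The delicate point, as flagged, is to set up the covering/packing surrogate so that it is simultaneously genuinely first-order over an arbitrary $R$ and robust under $ST$-equivalence and bi-Lipschitz maps; everything else amounts to re-checking that no earlier step silently used local compactness or the density of $R$ in $\R$.
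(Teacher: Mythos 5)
Your overall route is the paper's: recast \S\S 4--5 without sequences, reduce to the Generalised Proposition \ref{reductions}, and prove it via the analogue of Lemma \ref{maintool} with $E=LD(A)$, $F=h(LD(A))$. The genuine gap is in the one step you propose to do differently, the surrogate for the volume comparison. As literally stated your packing/covering dichotomy is false already over $\R$: if $\dim\alpha=k\ge1$, then $\alpha\cap B_{\epsilon}(0)$ contains a segment of length comparable to $\epsilon$, so covering $ST_{\theta}(\alpha)\cap B_{\epsilon}(0)$ by balls of radius $\theta(\epsilon)\epsilon$ requires on the order of $\theta(\epsilon)^{-1}$ balls, which is unbounded as $\epsilon\to0$; the true comparison is between two divergent counts (roughly $\theta(\epsilon)^{-\dim\alpha}$ versus $\theta(\epsilon)^{-\dim\beta}$), not ``bounded versus at least $m$''.

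More fundamentally, finite counting cannot detect this over a non-Archimedean $R$, which is precisely the difficulty \S 7 is built to circumvent. For infinitesimal $\epsilon$ no finite family of $\theta(\epsilon)\epsilon$-balls covers even a segment of length $\epsilon$, while arbitrarily many $\theta(\epsilon)\epsilon$-separated points fit in the sea-tangle of any cone of dimension $\ge1$; so for each fixed $m\in\N$ your two clauses are, respectively, trivially true and false for \emph{both} $\alpha$ and $\beta$, and they do not see the dimension gap. The estimate one would need, ``covering number $\le C\,\theta(\epsilon)^{-k}$'', compares an integer with an element of $R$ that may be infinitely large, hence is not a first-order statement transferable from $\R$, and fixing $m\in\N$ cannot recover it because finite multiples of infinitesimals stay infinitesimal --- exactly the phenomenon the paper exhibits with $Vol\,A_{0,x,1,\epsilon+x}$, where naive cube counting gives $\{a\ge0 \mid \exists k\in\N,\ a\le k\epsilon^{2}\}$ instead of $[0,\epsilon]$. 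The paper's solution is not a counting shadow but an ad hoc set-valued volume: $Vol$ is defined on bounded definable sets via disjoint cubes corrected by an inclusion--exclusion over cells, one works with the slicewise correspondences $\epsilon\mapsto Vol\,A_{\epsilon}$ and the relation $\approx$, and the volume lemmas are reproved in that setting (Lemmas \ref{gctimes}, \ref{gctimes}$'$, \ref{gvolumelemma}, Propositions \ref{gconevolume}, \ref{gSTvolume}, Lemma \ref{gmaintool}); moreover Proposition \ref{STvolume} is only recovered under a definability hypothesis, and since $F=h(LD(A))$ need not be definable, $Vol_X\approx Vol_Y$ is defined by sandwiching between definable sets and $\dim$ is taken in the sense of topological dimension theory --- points your sketch would also have to handle. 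Without some construction playing this role, the contradiction in Lemma \ref{maintool} cannot be reproduced, so the decisive step of your argument is missing.
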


Let us show Theorem \ref{non-Archimedean}.
Define $\Phi$ as in the Archimedean case. 
Let $A \subset R^n$ such that $0 \in \overline{A}$.
For $\theta \in \Phi$, we define the {\em sea-tangle neighbourhood}
of $A$ by
$$
ST_{\theta}(A) := \{ x \in  R^n \ | \ dist(x,A) \le 
\theta (\| x \|)\| x \| \}.
$$
If $A$ is definable, so is $ST_{\theta}(A)$. 
If $R$ is Archimedean, $ST_{\theta}(A)$
coincides with that in Definition \ref{oseatanglenbd}.
$ST$-{\em equivalence} is also defined as before. 

Using the notions of $\Phi$, sea-tangle neighbourhood and
condition $(SSP)$ as above,
Lemma \ref{osandwichlemma}, Proposition \ref{ST-equivalence}, 
Lemma \ref{keygeneral}, Corollary \ref{cor1}, 
Proposition \ref{okeyproperty}, Proposition \ref{oppositekey}, 
Theorem \ref{eqtheorem}, Corollary \ref{cor4}, 
Theorem \ref{iff}, Lemma \ref{odirectionrel}, 
Lemma \ref{LD(h(LD(A)))} and Proposition \ref{h(LD(A))}
are all proved for $R$, in the same way as before 
by replacing sequences with filters. 
There are no essential modifications until \S 5, because those 
arguments work in the family of definable sets and their images 
under bi-Lipschitz homeomorphisms. 
However we need to modify \S 6, in particular,
the volume arguments.

For a subset $A \subset R^n$ we define $\dim A$
to be the dimension of the topological space $A$  as in the dimension 
theory (see \cite{hurewiczwallman}). 
There are no problems with this definition because $\dim A = \dim h(A)$ 
for a homeomorphism $h$ of $R^n$, if $A \subset B$ then $\dim A \le \dim B$,
and if $A$ is definable then $\dim A$ coincides with the largest integer 
$k$, such that there exists a definable imbedding of $R^k$ into $A$.
Then Theorem \ref{non-Archimedean} follows from Proposition \ref{reductions}
for $R$ as in the Archimedean case. 
We will refer to Proposition \ref{reductions} for $R$ as to the
{\em Generalised Proposition} \ref{reductions}. 
If the germ of $A$ at $0 \in R^n$ in the statement of the 
Generalised Proposition 
\ref{reductions} is of dimension $n$, then $\dim h(LD(A)) = \dim LD(A) = n$ 
and the Generalised Proposition \ref{reductions} holds true. 
Therefore we consider only subsets of 
$\{(x_1, \cdots ,x_n) = (x',x_n) \in R^n \ | \ \| x' \| \le c x_n,
x_n \ge 0 \}$, $c > 0 \in R$, of dimension less than $n$. 
For each $\epsilon \ge 0 \in R$, set 
$$
A_{\epsilon} := \{ x' \in R^{n-1} \ | \ (x',\epsilon) \in A \}. 
$$
Then we can change the previous definition of $ST_\theta(A)$ to 
$$
\{ (x',x_n) \in R^n \ | \ x_n \ge 0, dist(x',A_{x_n}) \le \theta(x_n) x_n \}, 
$$
which is easier to calculate.

The problem is how to define the volume of $A$. 
We do not have any good definition of volume in the general case. 
However the following definition is sufficient for our purpose. 
Assume that $A$ is bounded and definable.
 
In the following we will define $Vol A$. 
For the simplest case of an open cube 
$O = (a_1,b_1) \times \cdots \times (a_n,b_n)$ in $R^n$ with
$a_i \le b_i \in R$, let $Vol O$ be the expected product $(b_1-a_1) \cdots (b_n-a_n)$. 
Next consider the case where $A$ is the following set:
$$
\{ (x_1,...,x_n) \in R^n \ | \ a_1\!<\!x_1\!<\!b_1,
a_2\!<\!x_2\!<\!\phi_2(x_1),...,a_n\!<\!x_n\!<\!\phi_n(x_1,...,x_{n-1}) \}
$$
for $a_1, \cdots , a_n, b_1 \in R$ and bounded definable $C^0$ 
functions $\phi_2$ on $(a_1,\,b_1), \cdots , \phi_n$ on 
\begin{eqnarray*}
& &  \{ (x_1, \cdots ,x_{n-1}) \in R^{n-1} \ |\\
\notag
& & a_1 <x_1 < b_1, a_2 < x_2 < \phi_2(x_1), \cdots , a_{n-1} < x_{n-1}
< \phi_{n-1}(x_1,\cdots ,x_{n-2}) \}. 
\end{eqnarray*}

\noindent We  write $A = A_{a_1,...,a_n, b_1,\phi_2,...,\phi_n}$. 
For each $k=1,2, \cdots$, let $Vol_k A$ denote the maximal number 
of $\sum_i Vol O_i$ where $\{O_i\}$ runs over all $k$ 
disjoint open cubes contained 
in $A$ (they exist since $A$ is definable). 
Note that $\{ Vol_k A \}_{k=1,2,...}$ is increasing. 
From now on we identify $Vol_k A$ with 
$\{ t \ge 0 \in R \ | \ t \le Vol_k A \}$. 
Set $Vol A := \cup_k Vol_k A$, which is a convex subset 
of $\{ t \ge 0 \in R \}$ and contains 0. 

Consider the set $A$ of the form 
\begin{eqnarray*}
& &  \{(x_1,...,x_n) \in R^n \ |\\ 
\notag
& & 
a_1\!<\!x_1<b_1,\psi_2(x_1)\!<\!x_2\!<\!\phi_2(x_1),...,
\psi_n(x_1,...,x_{n-1})\!<\!x_n\!<\!\phi_n(x_1,...,x_{n-1})\}
\end{eqnarray*}

\noindent for $a_1,b_1\in R$ and bounded definable $C^0$ functions 
$\psi_i$ and $\phi_i$ on 
$$
\{(x_1,...,x_{i-1})\!\in\! R^{i-1} \ | \ a_1\!<\!x_1\!<\!b_1,...,
\psi_{i-1}(x_1,...,x_{i-2})\!<\!x_{i-1}\!<\!\phi_{i-1}(x_1,...,x_{i-2})\}
$$
with $\psi_i < \phi_i, \ i=2, \cdots ,n$. 
We  call $A$ {\em of cell form} and write 
$A\!=\!A_{a_1,\psi_2,...,b_1,\phi_2,...}$.
If we define $Vol A$ for $A$ of cell form in the same way as above, 
then our arguments do not work. 
For example, let $\epsilon>0\in R$ be smaller than any positive real 
number and set $A = A_{0,x,1,\epsilon+x}$ in $R^2$. 
Then 
$$
Vol A = \{ a \ge 0 \in R \ | \ \exists k \in \N \ a \le k \epsilon^2\}. 
$$
However we expect $Vol A = [0,\,\epsilon]$, both  from the context of 
the proofs in \S6 and from the following arguments. 
Here we introduce an artificial different definition of volume. 
Choose $a_2, \cdots ,a_n \in R$ so small that 
$a_2 < \psi_2, \cdots ,a_n < \psi_n$. 
Then 
$$
A = A_{a_1,...,a_n,b_1,\phi_2,...,\phi_n} - 
\cup \overline{A_{a_1,...,a_n,b_1,\rho_2,...,\rho_n}},
$$
where $\{\rho_2,...,\rho_n\}$ satisfy $\rho_i = \psi_i$ 
for one $i$ and $\rho_j = \phi_j$ for the other $j's$. For 
such distinct families $\{ \rho_i \}$ and $\{ \rho'_i \}$  we have
$$
A_{a_1,...,a_n,b_1,\rho_2,...,\rho_n} \cap A_{a_1,...,a_n,b_1,\rho'_2,
...,\rho'_n} 
= A_{a_1,...,a_n,b_1,\min(\rho_2,\rho'_2),...,\min(\rho_n,\rho'_n)}.
$$
If $R = \R$ then 
$$
Vol A = \sum p_{\xi_2,...,\xi_n} Vol A_{a_1,...,a_n,b_1,\xi_2,...,\xi_n}
$$
in the usual sense of volume for some integers $p_{\xi_2,...,\xi_n}$
where $\{ \xi_2, \cdots ,\xi_n \}$ are so that for all $i$,  $\xi_i = \psi_i$ or 
$\xi_i = \phi_i$. 
Here the $p_{\xi_2,...,\xi_n}$'s do not depend on the special choice 
of $\psi_i$ and $\phi_i$. 
For a general $R$, set 
\begin{eqnarray*}
& & Vol_k A := \sum p_{\xi_2,...,\xi_n} 
Vol_k A_{a_1,...,a_n,b_1,\xi_2,...,\xi_n}, \\
& & Vol A := \cap_{l=1}^\infty \cup_{k=l}^\infty Vol_k A,
\end{eqnarray*}

\noindent i.e., $\epsilon \ge 0 \in Vol A$ if and only if 
$\forall l \in \N \ \exists k \ge l \in \N \ \epsilon \in Vol_k A$. 

Note that $Vol A_{0,x,1,\epsilon+x} = [0,\,\epsilon]$,
and if $A$ is of cell form and is a disjoint union of a definable set 
of dimension less than $ n$ and a finite number of definable sets $A_i$ 
of cell form, then $Vol A = \sum_i Vol A_i$.

Let $A$ be a general bounded definable set in $R^n$. 
Then, by the Cell Decomposition Theorem, $A$ is a disjoint union 
of a definable set of dimension less than $ n$ and a finite number 
of definable sets $A_i$ of cell form. 
Set $Vol A = \sum_i Vol A_i$. 
Then $Vol A$ does not depend on the choice of cell decomposition. 
If $B$ is open, bounded and definable and contains $\overline A$,
then $Vol A\varsubsetneq Vol B$; 
$Vol A=Vol h(A)$ for the map 
$h : R^n \ni (x_1,...,x_n) \to (x_{\tau(1)},...,x_{\tau(n)}) \in R^n$ 
where $\tau$ is a permutation of $\{1,...,n\}$. 
For disjoint bounded definable sets $A_1$ and 
$A_2$ we have  $Vol(A_1 \cup A_2) = Vol A_1 + Vol A_2$.

This is the definition of volume of definable subsets of $R^n$. 

For a definable subset $A$ of $R^n$ and for each $\epsilon \ge 0 \in R$, 
$Vol A_\epsilon$ is a subset of $R$. 
Note that $Vol A_\epsilon$ is calculated regarding  $A_\epsilon$ 
as a subset of $R^{n-1}$. 
We are interested in the correspondence 
$[0,\,\infty) \ni \epsilon \to Vol A_\epsilon \subset R$. 
Let $Vol_A$ denote the correspondence. 
Let $f$ and $g$ be maps from $[0,\,\infty)$ to the power set $\frak B(R)$. 
If there are $K > 0$ and $\delta > 0$ in $R$ such that 
$$
f(\epsilon) \subset Kg(\epsilon) \subset K^2f(\epsilon) \quad
\text{for} \quad \epsilon \in [0,\,\delta],
$$
we write $f \approx g$. 
By $\lim_{\epsilon \to 0} \frac{g(\epsilon)}{f(\epsilon)} = 0$,
we mean that $\forall \epsilon > 0 \in R\ \exists \delta > 0 \in R 
\ \forall \epsilon_1 \in (0,\,\delta] \ 
g(\epsilon_1) \subset \epsilon f(\epsilon_1) \not= \{ 0 \}$.

For subsets $X$ and $Y$ of $R^n$, $Vol_X \approx Vol_Y$ means that 
there are definable sets $A_1,A_2,B_1$ and $B_2$ in $R^n$ such that 
$$
A_1 \subset X \subset A_2, \ B_1 \subset Y \subset B_2\ \text{and}
\ Vol_{A_1} \approx Vol_{A_2} \approx Vol_{B_1} \approx Vol_{B_2}.
$$
In the same way we define 
$\lim_{\epsilon\to0}\frac{Vol_X(\epsilon)}{Vol_Y(\epsilon)}=0$.

Because of  Lemma \ref{ctimes}, we may expect 
$Vol_{ST_{c\theta}(X)} \approx Vol_{ST_\theta(X)}$, for 
$X \subset R^n, \ \theta \in \Phi$ and $c > 0 \in R$. 
However we do not know whether this is the case. 
We first prove:
\begin{lem}\label{gctimes}
Let $A$ be a definable cone at $0 \in R^n$. 
Then
$$
Vol_{ST_{c\theta}(A)} \approx Vol_{ST_\theta(A)}
$$
for $\theta \in \Phi$ and $c > 0 \in R$. 
\end{lem}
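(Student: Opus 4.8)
The plan is to reduce the statement, using the normalisations set up above, to a purely metric ``doubling'' property of tubular neighbourhoods, and then to establish that property by a lattice-covering argument that uses no compactness and hence works over an arbitrary real closed field.

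First I would recall that, after the reductions made above (restricting to $A$ contained in a cone $\{\|x'\|\le c_0 x_n,\ x_n\ge 0\}$ and using the simplified form of the sea-tangle neighbourhood), $ST_\theta(A)$ denotes the set $\{(x',x_n)\in R^{n-1}\times R : x_n\ge 0,\ dist(x',A_{x_n})\le\theta(x_n)x_n\}$, and $S:=A_1\subseteq\{\|x'\|\le c_0\}$ is a bounded definable subset of $R^{n-1}$. For $\rho\ge 0\in R$ and $Z\subseteq R^{n-1}$ write $N_\rho(Z):=\{y\in R^{n-1}: dist(y,Z)\le\rho\}$. Because $A$ is a cone we have $A_\epsilon=\epsilon S$ for $\epsilon>0$, and since $dist$ scales under positive dilations this gives $ST_\theta(A)_\epsilon=N_{\theta(\epsilon)\epsilon}(\epsilon S)=\epsilon\,N_{\theta(\epsilon)}(S)$, and likewise $ST_{c\theta}(A)_\epsilon=\epsilon\,N_{c\theta(\epsilon)}(S)$. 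The volume introduced above is homogeneous of degree $n-1$ under dilations of $R^{n-1}$ (this holds for open cubes and is inherited at each step of the inductive definition of $Vol$), so
$$
Vol_{ST_\theta(A)}(\epsilon)=\epsilon^{\,n-1}\,Vol(N_{\theta(\epsilon)}(S)),\qquad
Vol_{ST_{c\theta}(A)}(\epsilon)=\epsilon^{\,n-1}\,Vol(N_{c\theta(\epsilon)}(S)).
$$
The factors $\epsilon^{\,n-1}$ cancel, so it suffices to prove: for every bounded definable $S\subseteq R^{n-1}$ and every $c>0\in R$ there is $K>0\in R$ with $Vol(N_{c\rho}(S))\subseteq K\,Vol(N_\rho(S))\subseteq K^2\,Vol(N_{c\rho}(S))$ for all $\rho>0\in R$; applying this with $\rho=\theta(\epsilon)$ then yields the lemma.

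To prove the doubling I may assume $c\ge 1$ (otherwise interchange $c\rho$ and $\rho$, i.e. replace $c$ by $1/c$), so that $N_\rho(S)\subseteq N_{c\rho}(S)$ gives one inclusion for free. For the reverse, the key observation is that
$$
N_{c\rho}(S)\ \subseteq\ \bigcup_{w\in\Z^{n-1},\ \|w\|\le\lceil c\rceil}\bigl(N_\rho(S)+\rho w\bigr).
$$
Indeed, given $x\in N_{c\rho}(S)$ pick $s\in S$ with $\|x-s\|\le c\rho$; each coordinate $(x_i-s_i)/\rho$ lies in $[-c,c]\subseteq R$, hence within $\tfrac{1}{2}$ of some integer $w_i$ with $|w_i|\le\lceil c\rceil$ (nearest-integer rounding is legitimate in any real closed field once the number is known to lie in a bounded interval), whence $\|x-s-\rho w\|\le\tfrac{1}{2}\rho\le\rho$, i.e. $x-\rho w\in N_\rho(S)$. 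Since $Vol$ is translation invariant and finitely subadditive on bounded definable subsets of $R^{n-1}$ --- both consequences of its construction, like the other elementary properties of $Vol$ recorded above --- it follows that $Vol(N_{c\rho}(S))\subseteq (2\lceil c\rceil+1)^{\,n-1}\,Vol(N_\rho(S))$, so one may take $K=(2\lceil c\rceil+1)^{\,n-1}$.

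Neither smallness of $\rho$ nor any compactness enters, which is precisely why the argument survives the passage to a general real closed field. The one place requiring genuine care is the bookkeeping with the ad hoc volume: one must check that it is homogeneous of degree $n-1$, translation invariant, monotone and finitely subadditive on bounded definable sets, each of which follows routinely from the definition via open cubes, sets of cell form and the Cell Decomposition Theorem. I expect this bookkeeping to be the only fussy point; there is no conceptual obstacle.
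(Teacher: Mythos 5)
Your reduction (homogeneity of the ad hoc $Vol$ under dilation, passage to the slice $S=A_1$, and the uniform-in-$\rho$ doubling statement $Vol(N_{c\rho}(S))\subseteq K\,Vol(N_\rho(S))$) is sound and is essentially the same reduction the paper makes implicitly. The gap is in the key covering step. The lemma is stated, and needed, for arbitrary $c>0\in R$ where $R$ may be non-Archimedean (this is the whole point of \S 7; the constants fed into it come, e.g., via the Sandwich Lemma from bi-Lipschitz constants $K_1,K_2\in R$, which need not be bounded by any natural number). If $c$ is an infinite element of $R$, then $\lceil c\rceil$ does not exist, nearest-integer rounding of a point of $[-c,c]$ fails (e.g.\ $c/2$ is not within $1/2$ of any integer), and, more fundamentally, no covering of $N_{c\rho}(S)$ by finitely many translates of $N_\rho(S)$ can exist: already for $S$ a single point the correct comparison factor is of order $c^{n-1}$, which exceeds every natural number, whereas any argument based on translation invariance plus \emph{finite} subadditivity can only produce a $K\in\N$. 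So your proof establishes the lemma only when $c$ is bounded by a standard integer; the claim that ``the argument survives the passage to a general real closed field'' is exactly where the Archimedean hypothesis sneaks back in, through $c$ rather than through $\rho$.

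This is why the paper argues differently: it stratifies the base $C$ into definable $C^1$ strata $C_1$ that are graphs, over coordinate subspaces $p(C_1)\subset R^k$, of maps with gradient bounded by $1$, compares the $\rho$-neighbourhood of $C_1$ with that of $p(C_1)$ inside $p^{-1}(p(C_1))$ at the cost of a factor $2^{\,n-1-k}$, and then for the flat piece $p(C_1)$ reads off the ratio directly as $c^{\,n-1-k}$ --- an element of $R$, perfectly admissible as the constant $K$ in the definition of $\approx$ even when $c$ is infinite. For Archimedean $R$ (or for $c$ below a fixed natural number) your lattice-translation argument is correct and pleasantly elementary --- it avoids the $C^1$ stratification with gradient bounds altogether, at the price of the remaining bookkeeping (translation invariance, monotonicity, finite subadditivity and degree-$(n-1)$ homogeneity of the ad hoc $Vol$, plus replacing ``pick $s\in S$ with $\|x-s\|\le c\rho$'' by an approximate choice, since the set-valued $dist$ need not be attained). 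But to cover the general case you would have to replace the counting constant $(2\lceil c\rceil+1)^{n-1}$ by a geometric one lying in $R$, which in effect forces an argument of the paper's type.
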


\begin{proof}
Assume $c > 1$, and let $A$ be the cone with base 
$C \times \{1\} \subset R^{n-1} \times R$, where we 
 assume that $C$ is closed. 
Since $C$ admits a finite stratification with definable $C^1$ manifolds, 
we only need to prove the lemma for the cones with vertex $0$
and base each of the strata. 
In addition, as in \S II.1 in \cite{shiota}, we can choose the 
stratification so that for each stratum $C_1$ of dimension $k$, 
there exist $n_1 < \cdots <n_k$ in $\{1, \cdots ,n\}$ (assume, for the simplicity of notation, that $n_i=i, \, i=1, \cdots ,k$),  
 such that 
the restriction to $\overline{C_1}$ (not only to $C_1$)
of the projection 
$p : R^{n-1} \ni (x_1, \cdots ,x_{n-1}) \to (x_1, \cdots ,x_k) \in R^k$ 
is injective, i.e., $\overline{C_1}$ is the graph of some $C^0$ map 
$\alpha = (\alpha_1, \cdots ,\alpha_{n-1-k}) : p(\overline{C_1}) \to 
R^{n-1-k}$, $\alpha|_{p(C_1)}$ is of class $C^1$ and 
$\| \text{grad} \,\alpha_i|_{p(C_1)}\| \le 1$ for $i=1, \cdots ,
n - 1 - k$. 

What we want to prove is that there exists $K > 0 \in R$ such that 
$$
Vol \{x'\in R^{n-1} \ | \ dist(x',C_1) \le cc'\} \le 
K Vol \{x' \in R^{n-1} \ | \ dist(x',C_1) \le c'\}
$$
for small $c' \ge 0 \in R$. 
Proceeding by induction on $k$, we can reduce the problem to 
\begin{eqnarray}\label{71}
& &  Vol \{ x' \in p^{-1}(p(C_1)) \ | \ dist(x',C_1) \le cc'\} \\
\notag
& & 
\le K Vol \{x' \in p^{-1}(p(C_1)) \ | \ dist(x',C_1) \le c'\}. 
\end{eqnarray}

\noindent By the definition of volume 
\begin{eqnarray*}
& &  Vol \{x' \in p^{-1}(p(C_1)) \ | \ dist(x',C_1) \le cc' \} \\
\notag
& & 
\le  Vol \{x' \in p^{-1}(p(C_1)) \ | \ dist(x',p(C_1)) \le cc '\}.
\end{eqnarray*}

\noindent On the other hand, since $\| \text{grad} \ \alpha_i|_{p(C_1)}\| 
\le 1$,
\begin{eqnarray*}
& &  Vol \{ x' \in p^{-1}(p(C_1)) \ | \ dist(x',p(C_1)) \le c' \} \\
\notag
& & 
\le  2^{n-1-k} Vol \{x' \in p^{-1}(p(C_1)) \ | \ dist(x',C_1)\le c'\}.
\end{eqnarray*}

\noindent Thus we can replace $C_1$ in $(\ref{71})$ with $p(C_1)$. 
Clearly $(\ref{71})$ for $p(C_1)$ holds true for $K = c^{n-1-k}$.
\end{proof}

We generalise the above lemma as follows. 

\vspace{3mm}

\noindent {\bf Lemma \ref{gctimes}$'$}\label{ggctimes}
{\em Let $A$ be a definable set-germ at $0 \in R^n$. 
Then there exists $\theta_1 \in \Phi$ such that 
$$
Vol_{ST_{c\theta}(A)} \approx Vol_{ST_\theta(A)}
$$
for $\theta \in \Phi$ and $c > 0 \in R$ with $\theta \ge \theta_1$. }

\vspace{3mm}

\begin{proof}
By Theorem \ref{eqtheorem} $A$ is $ST$-equivalent to $LD(A)$. 
Therefore there exist $\theta_2,\theta_3 \in \Phi$ such that 
$$
LD(A) \subset ST_{\theta_2}(A) \ \text{and} \ 
A \subset ST_{\theta_3}(LD(A)). 
$$
Set $\theta_1 = 2 \max (\theta_2,\theta_3)$. 
Then, as in the proof of Proposition \ref{STvolume}, we have 
$$
ST_{{\theta \over 2}}(LD(A)) \subset ST_\theta(A) \subset ST_{2\theta}(LD(A))
$$
for $\theta \in \Phi$ with $\theta \ge \theta_1$. 
Since $LD(A)$ is a definable cone with vertex $0 \in R^n$, 
by Lemma \ref{gctimes}
$$
Vol_{ST_{{\theta \over 2}}(LD(A))} \approx 
Vol_{ST_{{c\theta \over 2}}(LD(A))} \approx 
Vol_{ST_{2c\theta}(LD(A))} \approx Vol_{ST_{2\theta}(LD(A))}. 
$$
Hence 
$$
Vol_{ST_{c\theta}(A)} \approx Vol_{ST_\theta(A)}.
$$
\end{proof}

By the above arguments the following lemma is clear;
it corresponds to Lemma \ref{volumelemma}. 

\begin{lem}\label{gvolumelemma}
Let $\alpha,\beta$ be linear subspaces of $R^{n-1}$ 
with $\dim \alpha < \dim \beta$. 
Let $\alpha_1$ and $\beta_1$ denote the cones in $R^n$ with vertex 
$0 \in R^n$ and bases 
$\{x' \in \alpha \ | \ \| x' \| \le 1 \} \times \{ 1 \}$ 
and $\{ x' \in \beta \ | \ \| x' \| \le 1 \} \times \{ 1 \}$,
respectively. 
Then, for $\theta \in \Phi$, 
$$
\lim_{\epsilon\to0}\frac{Vol_{ST_\theta(\alpha_1)}(\epsilon)}
{Vol_{ST_\theta(\beta_1)}(\epsilon)} = 0. 
$$
\end{lem}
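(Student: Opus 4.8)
The plan is to reduce Lemma~\ref{gvolumelemma} to an explicit box computation. By the reformulation of $ST_{\theta}$ adopted in this section and the definition of the cones $\alpha_1,\beta_1$, for small $\epsilon>0\in R$ the slice $(ST_{\theta}(\alpha_1))_{\epsilon}$ is exactly the closed $\theta(\epsilon)\epsilon$-neighbourhood, in the sup-metric of $R^{n-1}$, of the set $\{\,y'\in\alpha\ |\ \|y'\|\le\epsilon\,\}$, and likewise $(ST_{\theta}(\beta_1))_{\epsilon}$ is that of $\{\,y'\in\beta\ |\ \|y'\|\le\epsilon\,\}$. Writing $a=\dim\alpha$ and $b=\dim\beta$ (so $0\le a<b\le n-1$), the point is that these slices have $(n-1)$-dimensional $Vol$ of the order of $\epsilon^{n-1}\theta(\epsilon)^{\,n-1-a}$, resp. $\epsilon^{n-1}\theta(\epsilon)^{\,n-1-b}$, so that their quotient is of the order of $\theta(\epsilon)^{\,b-a}$, which tends to $0$ because $b-a\ge 1$ and $\theta\in\Phi$ is continuous at $0$ with $\theta(0)=0$.

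The main step is therefore to establish, for $\epsilon>0\in R$ small,
\[
Vol_{ST_{\theta}(\alpha_1)}(\epsilon)\approx[\,0,\ \epsilon^{n-1}\theta(\epsilon)^{\,n-1-a}\,],
\qquad
Vol_{ST_{\theta}(\beta_1)}(\epsilon)\approx[\,0,\ \epsilon^{n-1}\theta(\epsilon)^{\,n-1-b}\,],
\]
with $\approx$ as in this section. I would reduce each side to a coordinate subspace: choosing $a$ coordinates of $R^{n-1}$ on which the projection $p$ restricts to an isomorphism of $\alpha$, the set $\{y'\in\alpha:\|y'\|\le\epsilon\}$ is the graph over the coordinate $R^{a}$ of a fixed linear map of operator norm $M>0\in R$ (depending only on $\alpha$), and its $p$-image is a convex body in $R^{a}$ sandwiched between $[-c_{1}\epsilon,c_{1}\epsilon]^{a}$ and $[-c_{2}\epsilon,c_{2}\epsilon]^{a}$ for constants $0<c_{1}\le c_{2}\in R$. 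The bounded-gradient comparison used in the proof of Lemma~\ref{gctimes} (now with gradient bound $M$ in place of $1$, which only changes the constants), together with Lemma~\ref{gctimes}$'$, the permutation-invariance of $Vol$ and its monotonicity on nested definable sets, then identifies $Vol_{ST_{\theta}(\alpha_1)}$ up to constants with $Vol_{ST_{\theta}}$ of the cone over the coordinate $R^{a}$; for the latter the slice at height $\epsilon$ is literally the box
\[
[-(1+\theta(\epsilon))\epsilon,\ (1+\theta(\epsilon))\epsilon]^{a}\times[-\theta(\epsilon)\epsilon,\ \theta(\epsilon)\epsilon]^{\,n-1-a},
\]
whose volume is the honest product $(2(1+\theta(\epsilon))\epsilon)^{a}(2\theta(\epsilon)\epsilon)^{\,n-1-a}\approx\epsilon^{n-1}\theta(\epsilon)^{\,n-1-a}$ once $\theta(\epsilon)\le 1$. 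The same argument with $b$ in place of $a$ gives the $\beta$-estimate. Alternatively the two estimates follow directly, without any coordinate reduction: the upper bound by covering $\{y'\in\alpha:\|y'\|\le\epsilon\}$ with $O(\theta(\epsilon)^{-a})$ sup-balls of radius comparable to $\theta(\epsilon)\epsilon$ and using subadditivity of $Vol$, and the lower bound by packing $O(\theta(\epsilon)^{-b})$ pairwise disjoint sup-balls of radius $\theta(\epsilon)\epsilon$ centred at points of $\{y'\in\beta:\|y'\|\le\epsilon\}$ and using additivity of $Vol$ on disjoint definable sets together with monotonicity.

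Granting the two estimates, let $K>0\in R$ be a common comparison constant and $\delta_{0}>0\in R$ so that $Vol_{ST_{\theta}(\alpha_1)}(\epsilon)\subset[\,0,\ K\epsilon^{n-1}\theta(\epsilon)^{\,n-1-a}\,]$ and $[\,0,\ K^{-1}\epsilon^{n-1}\theta(\epsilon)^{\,n-1-b}\,]\subset Vol_{ST_{\theta}(\beta_1)}(\epsilon)$ for $0<\epsilon\le\delta_{0}$. Given $\eta>0\in R$, since $b-a\ge 1$ and $\theta$ is increasing and continuous at $0$ with $\theta(0)=0$, there is $\delta\in(0,\delta_{0}]$ with $K^{2}\theta(\epsilon)^{\,b-a}\le\eta$ for all $\epsilon\in(0,\delta]$; then $K\epsilon^{n-1}\theta(\epsilon)^{\,n-1-a}=\theta(\epsilon)^{\,b-a}\cdot K\epsilon^{n-1}\theta(\epsilon)^{\,n-1-b}\le\eta K^{-1}\epsilon^{n-1}\theta(\epsilon)^{\,n-1-b}$, so for such $\epsilon$
\[
Vol_{ST_{\theta}(\alpha_1)}(\epsilon)\ \subset\ \eta\,[\,0,\ K^{-1}\epsilon^{n-1}\theta(\epsilon)^{\,n-1-b}\,]\ \subset\ \eta\,Vol_{ST_{\theta}(\beta_1)}(\epsilon)\ \ne\ \{0\},
\]
the last set being nonzero because the slice of $ST_{\theta}(\beta_1)$ at height $\epsilon$ contains a cube. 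By the definition of the limit in this section this is precisely the conclusion of Lemma~\ref{gvolumelemma}.

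The routine parts are the box computation and the arithmetic of the exponents of $\theta$. The genuine obstacle is carrying the covering/packing (or sandwiching) estimates through the bespoke, interval-valued notion of $Vol$ on $R^{n-1}$ — one may appeal only to additivity of $Vol$ on disjoint definable sets, to monotonicity and subadditivity, to permutation-invariance, and to the bounded-gradient comparison of Lemma~\ref{gctimes}, not to any Euclidean change of variables — and in keeping the book-keeping of the constants compatible with the definitions of $\approx$ and of $\lim$. This is, however, entirely parallel to the Archimedean Lemma~\ref{volumelemma} and to the proof of Lemma~\ref{gctimes}.
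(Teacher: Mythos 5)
Your argument is correct, and it is essentially the argument the paper has in mind: the paper offers no written proof (it declares the lemma ``clear'' from the preceding arguments, i.e.\ the coordinate-projection/bounded-gradient comparison in Lemma~\ref{gctimes} together with the Archimedean model Lemma~\ref{volumelemma}), and your slice-by-slice reduction to boxes, giving $Vol_{ST_\theta(\alpha_1)}(\epsilon)\approx[0,\epsilon^{n-1}\theta(\epsilon)^{n-1-a}]$ versus $[0,\epsilon^{n-1}\theta(\epsilon)^{n-1-b}]$ and hence a ratio of order $\theta(\epsilon)^{b-a}\to 0$, checked against the section's interval-valued definitions of $Vol$, $\approx$ and $\lim$, is precisely that intended computation carried out in detail.
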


Using Lemmas \ref{gctimes} and \ref{gvolumelemma}, we can show 
the following proposition in the same way as in the proof of 
Proposition \ref{conevolume}. 

\begin{prop}\label{gconevolume}
Let $\alpha,\beta \subset R^n$ be definable cones at $0 \in R^n$. 
Suppose that $\dim \alpha < \dim \beta$. 
Then, for $\theta \in \Phi$,
$$
\lim_{\epsilon\to0}\frac{Vol_{ST_\theta(\alpha)}(\epsilon)}
{Vol_{ST_\theta(\beta)}(\epsilon)} = 0. 
$$
\end{prop}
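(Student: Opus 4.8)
The plan is to mirror the proof of Proposition \ref{conevolume}, replacing the numerical volume comparisons with their set-valued analogues from \S 7 and feeding in Lemmas \ref{gctimes} and \ref{gvolumelemma} at exactly the places where Lemmas \ref{ctimes} and \ref{volumelemma} were used in the Archimedean case. Recall that here $ST_\theta(A)$ is taken in the form $\{(x',x_n)\in R^n \mid x_n\ge 0,\ dist(x',A_{x_n})\le\theta(x_n)x_n\}$, so the volume estimates are really about $(n-1)$-dimensional volumes of $\theta$-neighbourhoods of the slices.

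After the reduction already made, $\alpha$ and $\beta$ may be assumed to lie in the region $\{(x',x_n)\in R^n \mid \|x'\|\le cx_n,\ x_n\ge 0\}$. The key point is the claim: for a definable cone $\gamma\subset R^n$ with vertex $0$ of dimension $r$,
$$
Vol_{ST_\theta(\gamma)} \approx Vol_{ST_\theta(M_1)} \qquad (\theta\in\Phi),
$$
where $M_1\subset R^n$ is the cone with vertex $0$ and base $\{x'\in M \mid \|x'\|\le 1\}\times\{1\}$ for some (equivalently, any) $(r-1)$-dimensional linear subspace $M$ of $R^{n-1}$. Granting the claim, if $\dim\alpha = r < r' = \dim\beta$ I pick linear subspaces $M\subset M'$ of $R^{n-1}$ of dimensions $r-1$ and $r'-1$ and form the cones $\alpha_1,\beta_1$ of Lemma \ref{gvolumelemma}; then $Vol_{ST_\theta(\alpha)}\approx Vol_{ST_\theta(\alpha_1)}$ and $Vol_{ST_\theta(\beta)}\approx Vol_{ST_\theta(\beta_1)}$, and since each $\approx$ only changes the relevant bounds by a fixed factor $K\in R$, it is absorbed by the quantifier on $\epsilon$ in the definition of $\lim_{\epsilon\to0}\frac{\,\cdot\,}{\,\cdot\,}=0$. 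Hence Lemma \ref{gvolumelemma} yields $\lim_{\epsilon\to0}\frac{Vol_{ST_\theta(\alpha)}(\epsilon)}{Vol_{ST_\theta(\beta)}(\epsilon)}=0$, which is the assertion.

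To prove the claim I would argue exactly as for $(\ref{equiv01})$ in the proof of Proposition \ref{conevolume}. We may assume $\gamma$ is equidimensional of dimension $r$. Using the stratification technique already used in the proof of Lemma \ref{gctimes}, partition $\gamma$ into finitely many $r$-dimensional definable subcones $\gamma_1,\dots,\gamma_s$ with vertex $0$ and choose $(r-1)$-dimensional linear subspaces so that, relative to the orthogonal projection onto the $i$-th linear cone, $\gamma_i$ is the graph of a definable map whose gradient has norm $\le 1$, while the fibres of $ST_\theta(\gamma)$ over the base have diameter $\le 4\theta(\|x\|)\|x\|$. This gives a one-sided estimate $Vol_{ST_\theta(\gamma)}\precsim Vol_{ST_{4\theta}(M_1)}$, which by Lemma \ref{gctimes} becomes $Vol_{ST_\theta(\gamma)}\precsim Vol_{ST_\theta(M_1)}$. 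For the opposite inequality, subdivide further so that one of the projected bases is a closed cone in its linear subspace; that subspace is then covered by finitely many closed cones of the same size, giving $Vol_{ST_\theta(M_1)}\precsim Vol_{ST_\theta(\gamma_i)}\precsim Vol_{ST_\theta(\gamma)}$. Combining the two one-sided estimates proves the claim.

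The main obstacle is not the geometry, which is verbatim the Archimedean argument, but the bookkeeping forced by the set-valued notion of volume of \S 7: one must check that the covering and graph estimates above are compatible with the $Vol_k$-construction and with the relation $\approx$ (relations of the form $f(\epsilon)\subset Kg(\epsilon)\subset K^2 f(\epsilon)$), and that the lower-dimensional strata discarded in the equidimensionality reduction are genuinely negligible (handled by induction on $\dim\gamma$, since $ST_\theta$ of such a stratum sits inside $ST_{c\theta}$ of a cone of strictly smaller dimension). Since Lemma \ref{gctimes} already contains the delicate step, namely controlling $dist$ under the projection to $R^r$ via graphs with $\|\text{grad}\ \alpha_i\|\le 1$, what remains is a routine recombination of these estimates.
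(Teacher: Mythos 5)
Your proposal is correct and follows essentially the paper's own route: the paper proves Proposition \ref{gconevolume} precisely by repeating the argument of Proposition \ref{conevolume} (equidimensional reduction, partition of the cone into graphs over linear subspaces with the $4\theta$ diameter bound, and the reverse covering estimate), with Lemma \ref{gctimes} substituted for Lemma \ref{ctimes} and Lemma \ref{gvolumelemma} for Lemma \ref{volumelemma}. Your additional remarks on the set-valued volume bookkeeping and on discarding lower-dimensional strata only make explicit what the paper leaves implicit.
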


The following lemma is clear. 

\begin{lem}\label{gdefinabledimension}
Let $A\subset R^n$ be a definable set-germ at $0 \in R^n$. 
Then we have $\dim LD(A)\le\dim A$. 
\end{lem}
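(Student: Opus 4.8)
The plan is to adapt the proof of Lemma \ref{definabledimension} (which is Lemma 7.3 of \cite{koikepaunescu} in the Archimedean case), working throughout with the o-minimal dimension of definable sets, which, as recalled above, coincides with the topological dimension $\dim$ appearing in the statement.

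First I would dispose of a trivial case: if the germ of $A$ at $0$ is $\{0\}$, then $D(A)=\emptyset$, $LD(A)=\{0\}$ and both sides vanish. Otherwise the Curve Selection Lemma gives $\dim A\ge 1$, and deleting the point $0$ does not change the dimension, so $\dim(A\setminus\{0\})=\dim A$. Next, consider the definable homeomorphism $\sigma:R^n\setminus\{0\}\to S^{n-1}\times\{t\in R:t>0\}$, $\sigma(x)=(x/\|x\|,\|x\|)$, and set $\widehat A:=\sigma(A\setminus\{0\})$; this is definable with $\dim\widehat A=\dim A$. Unwinding the definition of the direction set (in the $\forall\epsilon\,\forall\delta\,\exists x$ form given above for a general real closed field) shows precisely that $a\in D(A)$ if and only if $(a,0)$ lies in the closure of $\widehat A$ in $S^{n-1}\times R$. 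Since $\widehat A\subset S^{n-1}\times\{t>0\}$, no such point $(a,0)$ belongs to $\widehat A$ itself, so $D(A)\times\{0\}\subset\overline{\widehat A}\setminus\widehat A$.

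The heart of the argument is then the frontier inequality for definable sets in an o-minimal structure, $\dim(\overline{\widehat A}\setminus\widehat A)<\dim\widehat A$, a standard consequence of the Cell Decomposition Theorem that is valid over an arbitrary real closed field. Combined with the inclusion above this yields $\dim D(A)=\dim(D(A)\times\{0\})\le\dim\widehat A-1=\dim A-1$. Finally, $LD(A)$ is the image of the definable set $D(A)\times\{t\in R:t\ge 0\}$ under the continuous definable map $(a,t)\mapsto ta$, so $\dim LD(A)\le\dim D(A)+1\le\dim A$, as claimed (the inequality being vacuous when $D(A)=\emptyset$, in which case $LD(A)=\{0\}$).

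I do not expect a genuine obstacle here: every ingredient, namely definability of images and closures, invariance of $\dim$ under definable bijections, non-increase of $\dim$ under definable maps, and the frontier inequality, is part of the o-minimal toolkit over any real closed field, and for all the definable sets in play this $\dim$ is the topological one used in the statement. The only point worth a sentence in the write-up is the reconciliation of the two notions of dimension together with the remark that the frontier inequality uses no special feature of $\R$; this is exactly why the lemma can be declared ``clear''.
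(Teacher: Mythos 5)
Your argument is correct, and it supplies exactly the proof the paper omits: in \S 7 the lemma is simply declared clear (with the Archimedean analogue, Lemma \ref{definabledimension}, deferred to Lemma 7.3 of \cite{koikepaunescu}). Your route — the definable spherical blow-up $\sigma(x)=(x/\|x\|,\|x\|)$, the observation that the general-field definition of $D(A)$ says precisely $D(A)\times\{0\}\subset\overline{\widehat A}\setminus\widehat A$, the o-minimal frontier inequality $\dim(\overline{X}\setminus X)<\dim X$, and the non-increase of dimension under the definable map $(a,t)\mapsto ta$ — is the standard argument behind that reference, and every ingredient is valid for an o-minimal structure on an arbitrary real closed field; since all sets in play ($A$, $\widehat A$, $D(A)$, $LD(A)$) are definable, the paper's identification of the topological dimension with the o-minimal one applies throughout, as you note. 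Two cosmetic remarks only: with the paper's definition, $D(A)=\emptyset$ gives $LD(A)=\emptyset$ rather than $\{0\}$ (the inequality is trivial either way), and the appeal to the Curve Selection Lemma in the nontrivial case is superfluous, since the chain $\dim D(A)\le\dim\widehat A-1\le\dim A-1$ only needs $\dim\widehat A\le\dim A$.
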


We do not know whether Proposition \ref{STvolume} holds for general $R$. 
However, under some assumption, using Lemma \ref{gctimes}$'$ 
as in the proof of Proposition \ref{STvolume}, we can prove the following. 

\begin{prop}\label{gSTvolume}
Let $A,B$ be set-germs at $0 \in R^n$. 
Suppose that $A$ and $B$ are $ST$-equivalent and $A$ is definable. 
Then there exists $\theta_1 \in \Phi$ such that 
$$
Vol_{ST_\theta(A)} \approx Vol_{ST_\theta(B)}
$$
for any $\theta \in \Phi$ with $\theta \ge \theta_1$. 
\end{prop}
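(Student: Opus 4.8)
The plan is to follow the proof of Proposition~\ref{STvolume} as closely as possible, the one genuine difference being that $B$ is not assumed definable, so $ST_\theta(B)$ need not be definable and the asserted relation $Vol_{ST_\theta(A)} \approx Vol_{ST_\theta(B)}$ must be read through the definition of $\approx$ for arbitrary subsets: one has to exhibit definable sets sandwiching $ST_\theta(B)$ whose volume correspondences are $\approx$-equivalent to that of $ST_\theta(A)$ (which is itself the volume correspondence of a definable set, since $A$ is definable).

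First I would record, from the $ST$-equivalence of $A$ and $B$, germs $\theta_3,\theta_4 \in \Phi$ with $A \subset ST_{\theta_3/2}(B)$ and $B \subset ST_{\theta_4/2}(A)$ at $0 \in R^n$. Choosing $\theta_0 \in \Phi$ with $\theta_0(t) \ge 2\max(\theta_3(2t),\theta_4(2t))$ for small $t > 0$, the Claim inside the proof of Proposition~\ref{STvolume} yields, for every $\theta \in \Phi$ with $\theta \ge \theta_0$, the chain of germ inclusions
$$
ST_{\theta/2}(A) \subset ST_{\theta/2}(ST_{\theta_3/2}(B)) \subset ST_{2\theta}(B) \subset ST_{2\theta}(ST_{\theta_4/2}(A)) \subset ST_{4\theta}(A).
$$
Its verification uses only monotonicity of $ST_{(\cdot)}(\cdot)$ in its set argument together with the elementary ordered-field estimate $\|y\| \le 2\|x\|$ whenever $\|x-y\| \le \theta(\|x\|)\|x\|$ with $\theta(\|x\|)$ small, so it transfers verbatim to an arbitrary real closed field with the generalized distance. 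In particular $ST_{\theta/2}(A) \subset ST_{2\theta}(B) \subset ST_{4\theta}(A)$.

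Next I would substitute $\theta \mapsto \theta/2$ in this last chain, which is legitimate once $\theta/2 \ge \theta_0$, to get $ST_{\theta/4}(A) \subset ST_\theta(B) \subset ST_{2\theta}(A)$ for all $\theta \ge 2\theta_0$; here both outer sets are definable because $A$ is. By Lemma~\ref{gctimes}$'$ applied to $A$ there is $\theta_1' \in \Phi$ such that $Vol_{ST_{c\theta}(A)} \approx Vol_{ST_\theta(A)}$ for all $c > 0 \in R$ and all $\theta \ge \theta_1'$; using this with $c = 2$ and base $\theta$, and with $c = 4$ and base $\theta/4$, I get $Vol_{ST_{\theta/4}(A)} \approx Vol_{ST_\theta(A)} \approx Vol_{ST_{2\theta}(A)}$ as soon as $\theta \ge 4\theta_1'$. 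Fixing $\theta_1 \in \Phi$ with $\theta_1 \ge 2\theta_0$ and $\theta_1 \ge 4\theta_1'$, then for every $\theta \ge \theta_1$ the definable sets $A_1 := ST_{\theta/4}(A)$ and $A_2 := ST_{2\theta}(A)$ sandwich $ST_\theta(B)$, while $B_1 := B_2 := ST_\theta(A)$ trivially sandwich $ST_\theta(A)$ itself, and $Vol_{A_1} \approx Vol_{A_2} \approx Vol_{B_1} \approx Vol_{B_2}$ by transitivity of $\approx$; this is precisely the condition, with $X = ST_\theta(A)$ and $Y = ST_\theta(B)$, that $Vol_{ST_\theta(A)} \approx Vol_{ST_\theta(B)}$.

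The ordered-field inequalities inside the inclusion Claim and the transitivity of $\approx$ are routine bookkeeping. The one point requiring real care is exactly that $B$ need not be definable, so one cannot speak of $Vol\,ST_\theta(B)$ directly and must stay inside the ``sandwich by definable sets'' formulation throughout; the chain $ST_{\theta/4}(A) \subset ST_\theta(B) \subset ST_{2\theta}(A)$ with $A$ definable is what makes this possible. I expect no obstacle beyond keeping track of the successive rescalings of $\theta$ and making sure all the thresholds demanded by Lemma~\ref{gctimes}$'$ and by the inclusion Claim are absorbed into the single germ $\theta_1$.
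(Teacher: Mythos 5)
Your proposal is correct and follows exactly the route the paper intends: the paper gives no separate argument for Proposition~\ref{gSTvolume} beyond saying it follows ``using Lemma \ref{gctimes}$'$ as in the proof of Proposition \ref{STvolume}'', and your write-up is precisely that argument, with the inclusion Claim transferred to $R$, Lemma \ref{gctimes}$'$ (hence the definability of $A$, the paper's ``some assumption'') replacing Lemma \ref{ctimes}, and the rescaling $\theta\mapsto\theta/2$ so that $ST_\theta(B)$ itself is sandwiched between the definable sets $ST_{\theta/4}(A)$ and $ST_{2\theta}(A)$ as required by the sandwich definition of $\approx$ for non-definable sets. The threshold bookkeeping ($\theta_1\ge 2\theta_0$, $\theta_1\ge 4\theta_1'$) is handled correctly, so no gap remains.
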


The following corollary is also clear. 

\begin{cor}\label{gvolumeratio}
Let $\alpha \subset R^n$ be a definable set-germ at $0 \in R^n$, 
and let $\beta \subset R^n$ be a definable cone at $0 \in R^n$. 
Suppose that $\dim \alpha < \dim \beta$. 
Then there is $\theta_1 \in \Phi$ such that 
$$
\lim_{\epsilon\to0}\frac{Vol_{ST_\theta(\alpha)}(\epsilon)}
{Vol_{ST_\theta(\beta)}(\epsilon)} = 0 
$$
for any $\theta \in \Phi$ with $\theta \ge \theta_1$. 
\end{cor}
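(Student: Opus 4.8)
The plan is to obtain this by the same chain of implications that produced Corollary~\ref{volumeratio} in the Archimedean case, now fed with the general-field substitutes established above. First I would apply Theorem~\ref{eqtheorem}, which (as remarked in this section) holds over an arbitrary real closed $R$: since $\alpha$ is definable, $\alpha$ is $ST$-equivalent to $LD(\alpha)$, and $LD(\alpha)$ is a definable cone at $0 \in R^n$. By Lemma~\ref{gdefinabledimension} we have $\dim LD(\alpha) \le \dim \alpha < \dim \beta$, so $LD(\alpha)$ and $\beta$ form a pair of definable cones with $\dim LD(\alpha) < \dim \beta$, which is precisely the hypothesis of Proposition~\ref{gconevolume}.

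Next, because $\alpha$ and $LD(\alpha)$ are $ST$-equivalent and $\alpha$ is definable, Proposition~\ref{gSTvolume} supplies a $\theta_1 \in \Phi$ such that
$$
Vol_{ST_\theta(\alpha)} \approx Vol_{ST_\theta(LD(\alpha))}
$$
for every $\theta \in \Phi$ with $\theta \ge \theta_1$. On the other hand, Proposition~\ref{gconevolume} applied to the cones $LD(\alpha)$ and $\beta$ gives
$$
\lim_{\epsilon\to0}\frac{Vol_{ST_\theta(LD(\alpha))}(\epsilon)}{Vol_{ST_\theta(\beta)}(\epsilon)} = 0
$$
for every $\theta \in \Phi$. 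For $\theta \ge \theta_1$ the corollary then follows by combining the two displays, once one verifies that replacing a correspondence by an $\approx$-equivalent one does not change the validity of a statement of the form $\lim_{\epsilon\to0}\frac{g(\epsilon)}{f(\epsilon)} = 0$.

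That last verification is the only step that is not a direct citation, and it is where the general-field bookkeeping has to be unwound: the quantities here are correspondences valued in subsets of $R$, not functions into $\R$, so $\approx$ and $\lim = 0$ must be handled purely in terms of their set-theoretic definitions. Concretely, from $Vol_{ST_\theta(\alpha)} \approx Vol_{ST_\theta(LD(\alpha))}$ one gets $K > 0$ and $\delta > 0$ in $R$ with $Vol_{ST_\theta(\alpha)}(\epsilon_1) \subset K\,Vol_{ST_\theta(LD(\alpha))}(\epsilon_1)$ for $0 \le \epsilon_1 \le \delta$; given a target $\epsilon > 0$ in $R$, apply the limit statement above with $\epsilon/K$ in place of $\epsilon$ to obtain $\delta' > 0$ in $R$ with $Vol_{ST_\theta(LD(\alpha))}(\epsilon_1) \subset (\epsilon/K)\,Vol_{ST_\theta(\beta)}(\epsilon_1) \ne \{0\}$ for $0 < \epsilon_1 \le \delta'$; then for $0 < \epsilon_1 \le \min(\delta,\delta')$ we have $Vol_{ST_\theta(\alpha)}(\epsilon_1) \subset \epsilon\,Vol_{ST_\theta(\beta)}(\epsilon_1)$, and the right-hand side is distinct from $\{0\}$ since $\epsilon > 0$ and $Vol_{ST_\theta(\beta)}(\epsilon_1) \ne \{0\}$. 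This yields $\lim_{\epsilon\to0}\frac{Vol_{ST_\theta(\alpha)}(\epsilon)}{Vol_{ST_\theta(\beta)}(\epsilon)} = 0$ for all $\theta \ge \theta_1$, which is the assertion. I expect no genuine obstacle beyond this elementary translation, which is the exact analogue of how Lemma~\ref{ctimes} was used in the Archimedean argument; the substantive work has already been absorbed into Lemma~\ref{gctimes}, Proposition~\ref{gconevolume} and Proposition~\ref{gSTvolume}.
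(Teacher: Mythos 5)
Your proposal is correct and follows exactly the route the paper intends: the paper states this corollary with no written proof ("also clear"), as the direct analogue of Corollary \ref{volumeratio}, i.e.\ it is obtained by combining Theorem \ref{eqtheorem} (valid over general $R$), Lemma \ref{gdefinabledimension}, Proposition \ref{gSTvolume} and Proposition \ref{gconevolume}, which is precisely your chain. Your explicit unwinding of $\approx$ and of the set-valued definition of $\lim_{\epsilon\to 0} = 0$ is the small bookkeeping step the authors leave implicit, and it is carried out correctly.
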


We need to modify Lemma \ref{maintool} as follows.

\begin{lem}\label{gmaintool}
Let $h : (R^n,0) \to (R^n,0)$ be a bi-Lipschitz homeomorphism, 
and let $A$ be a definable set-germ at $0 \in R^n$. 
Suppose that $h(A)$ is definable. 
Set $E = LD(A)$ and $F = h(E)$. 
Then $\dim LD(F) \le \dim E$. 
\end{lem}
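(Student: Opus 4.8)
I would follow the proof of Lemma \ref{maintool}, replacing the step in which $h$ is extended to a bi-Lipschitz homeomorphism of $\R^n$ --- not available over an arbitrary real closed field --- by a direct estimate of the (\S 7) volume of $h(ST_\theta(E))$. First collect the reductions already available for $R$. Since $E=LD(A)$, it is a definable cone, and $F=h(E)=h(LD(A))$, so by the generalised Lemma \ref{odirectionrel} and Lemma \ref{LD(h(LD(A)))} the set $LD(F)=LD(h(LD(A)))=LD(h(A))$ is a definable cone. Moreover $A\stsim LD(A)=E$ by the generalised Theorem \ref{eqtheorem}, hence $F=h(E)\stsim h(A)$ by the generalised Proposition \ref{ST-equivalence}, and $h(A)\stsim LD(h(A))=LD(F)$ again by Theorem \ref{eqtheorem}; by transitivity $F$ is $ST$-equivalent to $LD(F)$. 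We argue by contradiction, assuming $\dim LD(F)>\dim E$.

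I would then run the volume comparison over a cofinal subfamily $\Phi_0\subset\Phi$ of widths stable under constant reparametrisation (so that $\theta(\lambda t)\thickapprox\theta(t)$ for every $\lambda>0$; the odd roots $t\mapsto t^{1/(2\ell+1)}$, which lie in $\Phi$ since every o-minimal structure contains the semialgebraic sets, are a natural candidate). For $\theta\in\Phi_0$ large, the Sandwich Lemma \ref{osandwichlemma} applied to $h$ and $E$ gives $ST_{c_2\theta}(F)\subset h(ST_\theta(E))\subset ST_{c_1\theta}(F)$ with constants $0<c_2<c_1$ (here reparametrisation stability is used to absorb the rescalings $\theta(t/K_i)$ into constant factors), and the inclusions $ST_{\mu/4}(LD(F))\subset ST_\mu(F)\subset ST_{2\mu}(LD(F))$ for $\mu$ large --- obtained as in the proof of Proposition \ref{gSTvolume} from $F\stsim LD(F)$ --- yield
$$
ST_{(c_2/4)\theta}(LD(F))\ \subset\ h(ST_\theta(E))\ \subset\ ST_{2c_1\theta}(LD(F)).
$$
The extreme sets are definable sea-tangle neighbourhoods of the definable cone $LD(F)$ with proportional widths, so by Lemma \ref{gctimes} they have $\approx$-equal volume; hence $Vol_{h(ST_\theta(E))}$ is well-defined in the sense of \S 7 and $Vol_{h(ST_\theta(E))}\approx Vol_{ST_\theta(LD(F))}$.

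To finish: since $h$ is bi-Lipschitz with upper constant $K_2$ and \S 7 uses the maximum norm, the image of an open cube of side $s$ lies in a cube of side $K_2 s$; feeding this into the cube-sum definition of $Vol$ (and matching the slices $x_n=\epsilon$ on both sides up to a constant) gives the one-sided bound $Vol_{h(ST_\theta(E))}\precsim Vol_{ST_\theta(E)}$, which over a general $R$ plays the role of the bi-Lipschitz invariance of Lebesgue measure used in Lemma \ref{maintool}. Combining, $Vol_{ST_\theta(LD(F))}\precsim Vol_{ST_\theta(E)}$ for $\theta\in\Phi_0$ large. But $E$ and $LD(F)$ are definable cones with $\dim E<\dim LD(F)$, so Proposition \ref{gconevolume} forces $\lim_{\epsilon\to 0}Vol_{ST_\theta(E)}(\epsilon)/Vol_{ST_\theta(LD(F))}(\epsilon)=0$, a contradiction; hence $\dim LD(F)\le\dim E$. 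The main obstacle, I expect, is exactly this last one-sided volume estimate together with the bookkeeping of the Sandwich-Lemma reparametrisations and the choice of $\Phi_0$: over $\R$ both are free (from Lebesgue measure on $\R^n$ and from passing to $\R$-closures), whereas over an arbitrary real closed field they must be checked directly from the combinatorial volume of \S 7, and the interaction of $h$ with the coordinate slicing $x_n=\epsilon$ --- and, in structures with functions flatter than every power, the cofinality of $\Phi_0$ --- needs care.
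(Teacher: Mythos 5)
Your reductions (definability of the cone $LD(F)=LD(h(A))$, the chain $A\stsim E$, $F\stsim h(A)\stsim LD(F)$, and the contradiction against Proposition \ref{gconevolume}/Corollary \ref{gvolumeratio}) are exactly those of the paper, and the global strategy is the same. But the two devices you introduce to carry out the volume comparison over a general $R$ are where the argument has genuine gaps. First, the family $\Phi_0$ of odd power roots is \emph{not} cofinal in $\Phi$ once the o-minimal structure is not polynomially bounded, and this is precisely the situation for which the $\Phi$-sea-tangles were introduced (Example \ref{curve}): there the smallest admissible width in $A\subset ST_{\theta}(LD(A))$ behaves like $1/\sqrt{\log(1/t)}$, which dominates every $t^{1/(2\ell+1)}$ near $0$. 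Since the thresholds $\theta_1$ you must exceed (coming from Proposition \ref{okeyproperty} via Theorem \ref{eqtheorem} and Proposition \ref{gSTvolume}) can be of this logarithmic type, there may be \emph{no} $\theta\in\Phi_0$ with $\theta\ge\theta_1$, so the comparison cannot be run at any admissible width. The paper needs no reparametrisation-stable family: it establishes $(\ref{72})$, i.e.\ $Vol_{ST_\theta(F)}\approx Vol_{ST_\theta(E)}$ for \emph{all} $\theta\ge\theta_1$, by applying Proposition \ref{gSTvolume} twice through the definable intermediaries $A$ and $h(A)$ (using $E\stsim A$ and $F\stsim h(A)$), confining the effect of $h$ to the single comparison $Vol_{ST_\theta(A)}\approx Vol_{ST_\theta(h(A))}$, which it handles with Lemma \ref{osandwichlemma} and the constant-factor Lemma \ref{gctimes}$'$, and then finishes as in Lemma \ref{maintool}.

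Second, your closing estimate $Vol_{h(ST_\theta(E))}\precsim Vol_{ST_\theta(E)}$ is not established by the cube-image remark, and this is the heart of the matter. The \S 7 volume correspondence $Vol_X$ is the slice function $\epsilon\mapsto Vol(X_\epsilon)$, defined only for definable $X$; for other sets only the relation $\approx$ (via definable sandwiches) and the limit of ratios are defined, and no one-sided relation $\precsim$ is defined at all, while $h(ST_\theta(E))$ need not be definable. Moreover $h$ does not respect the slicing $x_n=\epsilon$: an $(n-1)$-cube packed inside a slice of $h(ST_\theta(E))$ pulls back to a curved $(n-1)$-dimensional Lipschitz graph spread over an interval of heights in $ST_\theta(E)$, which contains no cube of any single slice, so the inner (cube-packing) definition of $Vol$ gives nothing directly; one would need a Fubini/slab argument or a monotonicity statement for $\epsilon\mapsto Vol((ST_\theta(E))_\epsilon)$, none of which is supplied. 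This is exactly the difficulty the paper's arrangement avoids by only ever applying Proposition \ref{gSTvolume} and Lemma \ref{gctimes}$'$ to definable sea-tangle neighbourhoods. As written, the key inequality $Vol_{ST_\theta(LD(F))}\precsim Vol_{ST_\theta(E)}$ that should produce the contradiction is therefore missing, at the very point you yourself flag as the main obstacle.
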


\begin{proof}
In the proof of Lemma \ref{maintool}, (6.4) was a consequence of
Lemmas \ref{osandwichlemma} and \ref{ctimes}. 
For a general $R$, (6.4) corresponds to 

\begin{eqnarray}\label{72}
Vol_{ST_\theta(F)} \approx Vol_{ST_\theta(E)} \quad \text{for} \quad 
\theta \in \Phi.
\end{eqnarray}

\noindent If $F$ is definable, (\ref{72}) follows from 
Lemmas \ref{osandwichlemma} and \ref{gctimes}$'$. 
However $F$ is not necessarily definable or we do not know 
whether (\ref{72}) holds for any $\theta$. 
We will find $\theta_1 \in \Phi$ such that (\ref{72}) holds for 
$\theta \in \Phi$ with $\theta \ge \theta_1$. 
Such a restriction does not yield any trouble in our proof.

Since $A$ and $E$ are $ST$-equivalent, by Proposition \ref{gSTvolume}
there exists $\theta_1 \in \Phi$ such that 
$$
Vol_{ST_\theta(A)} \approx Vol_{ST_\theta(E)}
$$
for any $\theta \in \Phi$ with $\theta \ge \theta_1$.
By the same reason as above and Lemma \ref{odirectionrel} we can assume that 
$$
Vol_{ST_\theta(h(A))} \approx Vol_{ST_\theta(F)}. 
$$
On the other hand, as in the proof of Lemma \ref{maintool}, 
by Lemmas \ref{osandwichlemma} and \ref{gctimes}$'$ we have
$$
Vol_{ST_\theta(A)} \approx Vol_{ST_\theta(h(A))}. 
$$
Hence (\ref{72}) holds for $\theta \ge \theta_1 \in \Phi$.

The other arguments in the proof of Lemma \ref{maintool} continue to work. 
\end{proof}

The Generalised Proposition \ref{reductions} and then Theorem 
\ref{non-Archimedean} are proved in the same way as in \S 6.

\medskip
\bigskip

\end{document}